\pdfoutput=1
%
\documentclass[11pt]{article}

\usepackage[letterpaper]{geometry}
\usepackage{bm}
\usepackage[colorlinks=true,linkcolor=blue,anchorcolor=blue,citecolor=blue,urlcolor=blue,plainpages=false,pdfpagelabels,bookmarks=false]{hyperref}
\hypersetup{linktocpage}
\usepackage{amsmath, amsthm, amsfonts, amssymb,sansmath}
\numberwithin{equation}{section}
\usepackage[capitalize]{cleveref}
\usepackage{graphicx}
\usepackage{algorithm}
\usepackage[noend]{algpseudocode}
\usepackage{comment}
\usepackage{xcolor}
\usepackage[all]{xy}
\usepackage{xspace}
\usepackage{enumerate}
\usepackage{tikz}
\usepackage{tikz-cd}
\usepackage[textsize=scriptsize,disable]{todonotes}
\usepackage[font=small,labelfont=bf]{caption}
\usepackage{fixltx2e}
\usepackage{etex}
\usepackage[labelfont=rm]{subcaption}
\usepackage{mdwlist}
\usepackage{rotating} 

\usepackage[nottoc]{tocbibind}	
\setcounter{tocdepth}{1}	
\usepackage{tocloft}		
\setlength\cftparskip{-8pt}		
\setlength\cftsecindent{20pt}		

\usepackage{array}
\usepackage{booktabs}
\usepackage{multirow}
\setlength{\abovetopsep}{8pt}
\usepackage{ragged2e}
\newcolumntype{M}[1]{>{\RaggedRight\hspace{0pt}}m{#1}}

\usepackage[refpage,noprefix,intoc]{nomencl}
\makenomenclature

\setlength{\nomitemsep}{1pt}

\makenomenclature

\usepackage[numbers,sort,compress]{natbib}

\let\originalparagraph\paragraph
\renewcommand{\paragraph}[2][.]{\originalparagraph{#2#1}}
\captionsetup[table]{skip=0pt}


\hyphenation{co-sheaves}


\newtheorem{theorem}{Theorem}[section]
\newtheorem{proposition}[theorem]{Proposition}
\newtheorem{lemma}[theorem]{Lemma} 
\newtheorem{corollary}[theorem]{Corollary}

\theoremstyle{definition}
\newtheorem{example}[theorem]{Example}

\theoremstyle{definition}
\newtheorem{remark}[theorem]{Remark}


\newcommand\category[1]{\ensuremath{\mathbf{#1}}}
\DeclareMathOperator{\union}{\cup}
\DeclareMathOperator{\intrs}{\cap}
\DeclareMathOperator{\im}{im}

\DeclareMathOperator{\ceil}{cl}

\DeclareMathOperator{\coker}{coker}
\DeclareMathOperator{\obj}{obj}

\DeclareMathOperator{\rank}{rank}
\DeclareMathOperator{\rk}{rank}

\DeclareMathOperator{\id}{Id}
\DeclareMathOperator{\free}{Free}

\DeclareMathOperator{\push}{push}

\DeclareMathOperator{\dimf}{dimf}
\DeclareMathOperator{\Tor}{Tor}

\DeclareMathOperator{\nmod}{\kvect^{\RCat^n}}
\DeclareMathOperator{\gr}{gr}

\DeclareMathOperator{\crit}{crit}
\DeclareMathOperator{\Rips}{Rips}
\DeclareMathOperator{\BRips}{B-Rips}

\DeclareMathOperator{\supp}{supp}

\DeclareMathOperator{\comp}{\circ}

\DeclareMathOperator{\lub}{\textsc{lub}}
\DeclareMathOperator{\pairs}{Pairs}
\DeclareMathOperator{\ess}{Ess}
\DeclareMathOperator{\PE}{PE}

\newcommand{\cell}{\mathcal{A}}
\newcommand{\kVect}[0]{\category{Vect}}
\newcommand{\kvect}[0]{\category{Vect}}

\newcommand{\op}[0]{\mathrm{op}}
\newcommand{\indx}[0]{\mathrm{\zeta}}

\newcommand{\B}[1]{\mathcal B\ifthenelse{\equal{#1}{}}{}{(#1)}}
\newcommand{\Bi}[1]{{\mathcal B}_i\ifthenelse{\equal{#1}{}}{}{(#1)}}
\newcommand{\Bopen}[1]{U\ifthenelse{\equal{#1}{}}{}{(#1)}}

\newcommand{\barc}[1]{\mathcal B\ifthenelse{\equal{#1}{}}{}{(#1)}}

\newcommand{\A}[0]{\mathcal A}
\newcommand{\Arm}[0]{\mathrm A}
\newcommand{\Brm}[0]{\mathrm B}
\newcommand{\Crm}[0]{\mathrm C}

\newcommand{\C}[0]{\mathcal C}
\newcommand{\D}[0]{\mathcal D}
\newcommand{\e}[0]{E}

\newcommand{\F}[0]{\mathcal F}
\newcommand{\G}[0]{\mathcal G}

\renewcommand{\H}[0]{\mathcal H}
\newcommand{\I}[0]{\mathcal I}

\renewcommand{\L}[0]{\mathcal L}
\newcommand{\M}[0]{\mathcal M}

\renewcommand{\O}[0]{\mathcal O}

\newcommand{\pth}[0]{\Gamma}
\renewcommand{\P}[0]{\mathcal T}
\newcommand{\fir}[0]{FI-rep\@\xspace}
\newcommand{\firs}[0]{FI-reps\@\xspace}
\newcommand{\PCal}[0]{\mathcal P}
\newcommand{\Pgeqr}[0]{\mathcal{P}^{\geq r}}
\newcommand{\R}[0]{\mathbb R}
\newcommand{\Rc}[0]{\mathcal R}
\newcommand{\Sc}[0]{\mathcal S}
\newcommand{\Qc}[0]{\mathcal Q}
\renewcommand{\S}[0]{\mathcal A^\bullet}

\newcommand{\vine}[0]{\mathrm{vine}}

\renewcommand{\u}[2]{\mathsf{lifts}_{#1}\ifthenelse{\equal{#2}{}}{}{[#2]}}
\MakeRobust{\u}
\newcommand{\g}[2]{\mathsf{grades}_{#1}\ifthenelse{\equal{#2}{}}{}{[#2]}}
\newcommand{\sig}[2]{\mathsf{sig}_{#1}\ifthenelse{\equal{#2}{}}{}{[#2]}}
\newcommand{\sigInv}[2]{\mathsf{sigInv}_{#1}\ifthenelse{\equal{#2}{}}{}{[#2]}}
\MakeRobust{\sigInv}
\newcommand{\curr}[0]{\mathsf{y}}
\newcommand{\topsort}[0]{\mathsf{sortOneElement}}

\newcommand{\Nbb}[0]{\mathbb N}
\newcommand{\W}[0]{\mathcal W}

\newcommand{\Y}[0]{\mathcal Y}
\newcommand{\Z}[0]{\mathbb Z}

\newcommand{\cupinf}[1]{#1\union \infty}

\newcommand{\CoEx}[0]{\mathrm{E}}
\newcommand{\dual}[0]{\mathcal{D}}
\newcommand{\DiscreteMod}[0]{Q}

\newcommand{\ord}{\mathrm{ord}}

\newcommand{\ordfun}[0]{\Omega}
\newcommand{\orderedCoords}[0]{o}
\newcommand{\mapUpdate}[2]{\mathsf{mapUpdate}_{#1}\ifthenelse{\equal{#2}{}}{}{(#2)}}

\newcommand{\low}{\Lambda}

\newcommand{\partialLevelSet}[2]{\mathsf{levSet}_{#1}(#2)}
\newcommand{\xiSuppMat}[0]{\mathsf{tPtsMat}}
\newcommand{\anchors}[0]{\mathsf{anchors}}
\newcommand{\ancset}[0]{A}

\newcommand{\columns}[0]{{\sf columns}\@\xspace}
\newcommand{\rows}[0]{{\sf rows}\@\xspace}

\newcommand{\sbx}{\mathrm{box}(S)}
\newcommand{\switch}{\mathrm{sw}}
\newcommand{\sep}{\mathrm{sep}}

\newcommand{\XiSp}[0]{S}

\newcommand{\frontier}{{\sf Frontier}\@\xspace}

\newcommand{\length}{\mathrm{length}}

\newcommand{\lift}[0]{\mathrm{lift}}
\newcommand{\blift}[0]{\mathrm{lift}}
\newcommand{\MatLeft}[0]{{D_1}}
\newcommand{\MatRight}[0]{{D_2}}

\newcommand{\PCat}[0]{\category{P}}

\newcommand{\RCat}[0]{\category{R}}
\newcommand{\ZCat}[0]{\category{Z}}
\newcommand{\LCat}[0]{\category{L}}
\newcommand{\Simp}[0]{\category{Simp}}

\newcommand{\An}[0]{A_n}

\newcommand{\pfd}{p.f.d.\@\xspace}

\newcommand{\fl}{\mathrm{fl}}

\usetikzlibrary{calc}
\usetikzlibrary{decorations.pathreplacing}
\usetikzlibrary{positioning,chains,fit,shapes}
\tikzset{>=stealth}

\title{\textbf{Interactive Visualization of 2-D Persistence Modules}}
\author{Michael Lesnick\thanks{Columbia University, New York, NY, USA; \texttt{mlesnick@alumni.stanford.edu}} \and Matthew Wright\thanks{St.\ Olaf College, Northfield, MN, USA; \texttt{wright5@stolaf.edu}}}
\begin{document}

\date{}
\maketitle

\begin{abstract}
The goal of this work is to extend the standard persistent homology pipeline for exploratory data analysis to the 2-D persistence setting, in a practical, computationally efficient way.  To this end, we introduce RIVET, a software tool for the visualization of 2-D persistence modules, and present mathematical foundations for this tool.  RIVET provides an interactive visualization of the barcodes of 1-D affine slices of a 2-D persistence module $M$.  It also computes and visualizes the dimension of each vector space in $M$ and the bigraded Betti numbers of $M$.  At the heart of our computational approach is a novel data structure based on planar line arrangements, on which we can perform fast queries to find the barcode of any slice of $M$.  We present an efficient algorithm for constructing this data structure and establish bounds on its complexity.
\end{abstract}

\tableofcontents


\section{Introduction}

\subsection{Overview}
Topological data analysis (TDA) is a relatively new branch of statistics whose goal is to apply topology to develop tools for studying the coarse-scale, global, non-linear, geometric features of data. Persistent homology, one of the central tools of TDA, provides invariants of data, called barcodes, by associating to the data a filtered topological space $\F$ and then applying standard topological and algebraic techniques to $\F$.  In the last 15 years, persistent homology has been widely applied in the study of scientific data \cite{carlsson2014topological,edelsbrunner2010computational}, and has been the subject of extensive theoretical work \cite{bauer2014induced,chazal2009proximity,chazal2012structure,cohen2007stability,zomorodian2005computing,carlsson2009zigzag}.  

For many data sets of interest, such as point cloud data with noise or non-uniformities in density, a single filtered space is not a rich enough invariant to encode the structure of interest in our data \cite{carlsson2010multiparameter,carlsson2009theory,chazal2011geometric}. This motivates the consideration of multidimensional persistent homology, which in its most basic form associates to the data a topological space simultaneously equipped with two or more filtrations.  Multi-D persistent homology yields algebraic invariants of data far more complex than in the 1-D setting; new methodology is thus required for working with these invariants in practice.

In the TDA community, it is widely appreciated that there is a need for practical data analysis tools for handling multidimensional persistent homology.  However, whereas the community has been quick to develop fast algorithms and good publicly available software for 1-D persistent homology, it has been comparatively slow to extend these to the multi-D setting.  Indeed, to date there is, to the best of our knowledge, no publicly available software which extends the usual persistent homology pipeline for exploratory data analysis to the multidimensional persistence.  

This work seeks to address this gap in the case of 2-D persistence: building on ideas presented in \cite{carlsson2009theory} and \cite{cerri2013betti}, we introduce a practical tool for working with 2-D persistent homology in exploratory data analysis applications, and develop mathematical and algorithmic foundations for this tool.  Our tool can be used in much the same way that 1-D persistent homology is used in TDA, but offers the user significantly more information and flexibility than standard 1-D persistent homology does.  

Our tool, which we call RIVET (the Rank Invariant Visualization and Exploration Tool), is \emph{interactive}--it allows the user to dynamically navigate a two-parameter collection of persistence barcodes derived from a 2-D persistence module.   This is in contrast to previous visualization tools for persistent homology, which have presented static displays.  Because the invariants considered in this paper are larger and more complex than the standard 1-D persistence invariants, it is essential for the user have some nice way of ``browsing" the invariant on the computer screen.  We expect that as TDA moves towards the use of richer invariants in practical applications, interactive visualization paradigms will play an increasingly prominent role in the TDA workflow.

It should be possible to extend our approach for 2-D persistence to 3-D, at a computational cost.  However, there are a number of practical challenges in this, not the least of which is designing and implementing a suitable graphical user interface.  
Since there is already plenty to keep us busy in 2-D, we restrict attention to the 2-D case in this paper.

In the remainder of this section, we review multidimensional persistent homology,  introduce the RIVET visualization paradigm, and provide an overview of RIVET's mathematical and computational underpinnings.  Given the length of this paper, we expect that some readers will be content to limit their first reading to this introduction. 
We invite you to begin this way.

\paragraph{Availability of the RIVET Software}
We plan to make our RIVET software publicly available at \url{http://rivet.online} within the next few months.  In the meantime, a demo of RIVET can be accessed through the website.

\subsection{Multidimensional Filtrations and Persistence Modules}\label{Sec:Multi_D_Persistent_Homology}
We start our introduction to RIVET by defining multidimensional filtrations and persistence modules, and reviewing the standard 1-D persistent homology pipeline for TDA.  
Here and throughout, we freely use basic language from category theory.  An accessible introduction to such language in the context of persistence theory can be found in \cite{bubenik2014metrics}.  

\paragraph{Notation}
For categories, $\C$ and $\D$, let $\D^\C$ denote the category whose objects are functors $\C\to \D$ and whose objects are natural transformations.

For $\C$ a poset category, $\F:\C\to \D$ a functor, and $c\in \obj(c)$, let $\F_c=\F(c)$, and for $c\leq d\in \obj(c)$, let $\F(c,d):\F_c \to \F_d$ denote the image under $\F$ of the unique morphism in $\hom_\C(c,d)$.  
Let $\Simp$ denote the category of simplicial complexes and simplicial maps.  For a fixed field $K$, let $\kvect$ denote the category of $K$-vector spaces and linear maps.

Define a partial order on $\R^n$ by taking \[(a_1,\ldots,a_n)\leq (b_1,\ldots b_n)\] if and only if $a_i\leq b_i$ for all $i$, and let $\RCat^n$ denote the corresponding poset category.\nomenclature[R]{$\RCat^n$}{poset category of $\R^n$}

For $S$ a set, we let $\cupinf{S}$ denote the set $S\cup \{\infty\}$.

\paragraph{Finite Multidimensional Filtrations}

Define an \emph{n-D filtration} to be a functor $\F:\RCat^n\to \Simp$ such that for all $a\leq b$, $\F(a,b):\F_a\to \F_b$ is an inclusion.  We will usually refer to a 2-D filtration as a \emph{bifiltration}.  $n$-D filtrations are the basic topological objects of study in multidimensional persistence.  

In the computational setting, we of course work with filtrations that are specified by a finite amount of data.  Let us now introduce language and notation for such filtrations.  

We say an $n$-D filtration \emph{stabilizes} if there exists $a_0\in \R^n$ such that $\F_a=\F_{a_0}$ whenever $a\geq a_0$.  We write $\F_{\max}=\F_{a_0}$.  We say a simplex $s$ in $\F_{\max}$ \emph{appears a finite number of times} if there is a finite set $A\in \R^n$ such that 
\begin{enumerate}
\item for each $a\in A$, $s\in \F_a$, and
\item for each $b\in \R^n$ with $s\in \F_b$, there is some $a\in A$ with $a\leq b$.  
\end{enumerate}
If $s$ appears a finite number of times, then a minimal such $A$ is unique; we denote it $A(s)$, and call it the set of \emph{grades of appearance} of $s$.  

We say an $n$-D filtration $\F$ is \emph{finite} if 
\begin{enumerate}
\item $\F$ stabilizes,
\item $\F_{\max}$ is finite, and
\item each simplex $s\in \F_{\max}$ appears a finite number of times.  
\end{enumerate}
For $\F$ finite, we define $|\F|$, the \emph{size of $\F$}, by \[|\F|=\sum_{s\in \F_{\max}} |A(s)|.\]
In the computational setting, we can represent a finite $n$-D filtration $\F$ in memory by storing the simplicial complex $\F_{\max}$, along with the set $A(s)$ for each $s\in \F_{\max}$.

\paragraph{Multidimensional Persistence Modules}
Define an \emph{n-D persistence module} to be a functor $M:\RCat^n\to \kvect$.\nomenclature[M]{$M$}{multidimensional persistence module}
We say $M$ is \emph{pointwise finite dimensional (\pfd)} if $\dim(M_a)< \infty$ for all $a\in \R^n$.\nomenclature[pfd]{\pfd}{pointwise finite dimensional}
As we explain in \cref{Sec:MultidimensionalPersistenceModules}, the category $\kvect^{\RCat^n}$ of persistence modules is isomorphic to a category of multi-graded modules over suitable rings.  In view of this, we may define presentations of $n$-D persistence modules; see \cref{Sec:Free_Modules_Presentations} for details.

For $i\geq 0$, let $H_i:\Simp\to \kvect$ denote the $i^{\mathrm{th}}$ simplicial homology functor with coefficients in $K$.\nomenclature[Hi]{$H_i$}{$i^{\mathrm{th}}$ homology functor}
For $\F$ a finite $n$-D filtration, there exists a finite presentation for $H_i\F$, which implies in particular that $H_i\F$ is \pfd; see \cref{Sec:Free_Modules_Presentations} for details on presentations.

\paragraph{Barcodes of 1-D Persistence Modules}
\cite{crawley2012decomposition} shows that \pfd 1-D persistent homology modules decompose in an essentially unique way into indecomposable summands, and that the isomorphism classes of these summands are parameterized by (non-empty) intervals in $\R$.

Thus, we may associate to each \pfd 1-D persistence module $M$ a multiset $\B{M}$ of intervals in $\R$, which records the isomorphism classes of indecomposable summands of $M$.  \label{BarcodeDefRef} We call $\B{M}$ the \emph{barcode} of $M$; in general, we refer to any multiset of intervals as a barcode.

The barcode of a finitely presented 1-D persistence module consists of a finite set of intervals of the form $[a,b)$, with $a\in \R$, $b\in \cupinf{\R}$.  A barcode of this form can be represented as a \emph{persistence diagram}, i.e., a multiset of points in $(a,b)\in\R\times (\cupinf{\R})$ with $a\leq b$.  The right side of \cref{fig:barcode_example} depicts a barcode, together with its corresponding persistence diagram.

\begin{figure}[ht]
\begin{center}
	\begin{tikzpicture}
        	\draw[white!20] (0,0) circle (1.55);
		\draw[white!20] (0,0) circle (2.45);
        
             \foreach [count=\i from 0] \p in {(46:2.3),(54:1.6),(67:2.4),(80:1.9),(99:2.2),(122:2.2),(134:2.1),(137:1.8),(157:2.2),(160:1.7),(174:2.3),(179:1.8),(210:2.0),(232:1.7),(235:2.2),(255:1.7),(271:1.8),(275:2.0),(286:2.1),(294:1.6),(298:2.3),(308:1.8),(323:2.0),(352:1.7)}
			\fill \p circle (1.5pt);
   \end{tikzpicture}
   \hspace{0.5in}
   \begin{tikzpicture}
	\tikzstyle{h1} = [green!60!black,line width=2.4pt]
	\tikzstyle{axis} = [black!70]
	\tikzstyle{ref} = [black!30]
	
	\def\s{1.8}  

	\foreach [count=\i from 0] \a/\b in {1.72/3.02, 0.88/0.92, 0.68/0.78, 0.62/0.72}
	{
        \draw[ref] (-1.2 + 0.18*\i,\a*\s) -- (\a*\s,\a*\s) -- (\a*\s,\b*\s) -- (-1.2 + 0.18*\i,\b*\s);
        \draw[h1] (-1.2 + 0.18*\i,\a*\s) -- (-1.2 + 0.18*\i,\b*\s);
        \fill[green!60!black] (\a*\s,\b*\s) circle (1.8pt);
       }
       
	\draw[axis] (0.6,0.6) rectangle +(5.5,5.5);
	\draw[axis] (0.6,0.6) -- +(5.5,5.5);
	
	\draw[axis,->] (-0.5,0.6) -- +(0,5.5);
   \end{tikzpicture}
\end{center}
	\caption{A point cloud circle (left) and its $1^{\mathrm{st}}$ persistence barcode (right), obtained via the Vietoris-Rips construction.  A barcode can be visualized either by directly plotting each interval (green bars, oriented vertically) or a via persistence diagram (green dots, right).  The single long interval in the barcode encodes the presence of a cycle in the point cloud.}
	\label{fig:barcode_example}
\end{figure}
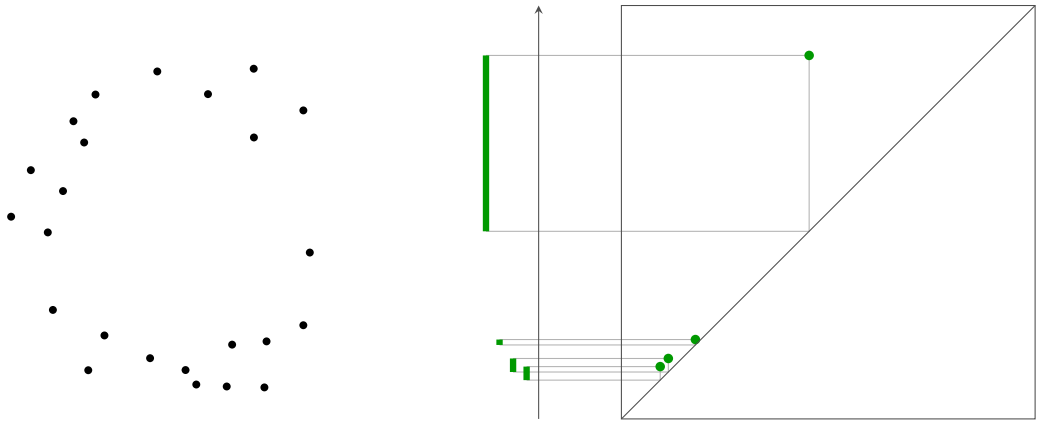

\subsection{Persistence Barcodes of Data}\label{Sec:Intro_Multi_D_PH}
The standard 1-D persistent homology pipeline for data analysis associates a barcode $\Bi{P}$ to a data set $P$, for each $i\geq 0$.  We regard each interval of each barcode $\Bi{P}$ as a topological feature of the data, and we interpret the length of the interval as a measure of robustness of the feature.  The pipeline for constructing $\Bi{P}$ proceeds in three steps:  

\begin{enumerate}
\item We associate to the data set $P$ a finite 1-D filtration $\F(P)$.  

\item We apply $H_i$ to obtain a finitely presented 1-D persistence module $H_i \F(P)$. 

\item We take $\Bi{P}=\B{H_i\F(P)}$.
\end{enumerate}

This pipeline for the construction of barcodes of data is quite flexible: in principle, we can work with a data set $P$ of any kind, and may consider any number of choices of the filtration $\F(P)$.  Different choices of $\F(P)$ topologically encode different aspects of the structure of our data.

The barcodes $\Bi{P}$ are readily computed in practice; see \cite{zomorodian2005computing} or \cref{sec:ComputationPersistenceBarcodes} for details.  

\paragraph{Persistence Barcodes of Finite Metric Spaces}
Here is one standard choice of $P$ and $\F(P)$ in the TDA pipeline: Let $P$ be a finite metric space, and let $\F(P)=\Rips(P)$, the \emph{Vietoris-Rips filtration} of $P$, be defined as follows.
For $t\geq 0$, $\Rips(P)_t$ is the maximal simplicial complex with 0-skeleton $P$, and 1-simplices the pairs $[p,q]$ with $d(p,q)\leq 2t$; for $t<0$, $\Rips(P)_t$ is the empty simplicial complex.

Informally, the long intervals in the barcodes $\Bi{P}$ correspond to cycles in the data; see \cref{fig:barcode_example} for an illustration.
\paragraph{Stability}
The following well-known result shows that these barcode invariants of finite metric spaces are robust to certain perturbations of the data:

\begin{theorem}[Stability of Barcodes of Finite Metric Spaces \cite{chazal2009gromov}]\label{Thm:GH_Stability} 
For all finite metric spaces $P$, $Q$ and $i\geq 0$,
\[d_{GH}(P,Q)\geq d_B(\Bi{P},\Bi{Q}),\]
where $d_{GH}$ denotes Gromov-Hausdorff distance and $d_B$ denotes the bottleneck distance on barcodes; see \textup{\cite{chazal2009gromov}} for definitions.  
\end{theorem}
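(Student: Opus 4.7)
The plan is to factor the argument through an interleaving-style comparison of the Rips filtrations and then invoke the algebraic bottleneck-stability theorem for 1-D persistence modules. Set $\epsilon = d_{GH}(P,Q)$. Using the well-known reformulation of Gromov--Hausdorff distance in terms of correspondences, for every $\epsilon' > \epsilon$ we can choose a correspondence $C \subseteq P \times Q$ whose metric distortion satisfies
\[
\sup\{\, |d_P(p,p') - d_Q(q,q')| : (p,q),(p',q') \in C\,\} < 2\epsilon'.
\]
This is where all of the geometric information about $P$ and $Q$ enters.

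Next, I would use $C$ to build a pair of filtered simplicial maps that realize an $\epsilon'$-interleaving between $\Rips(P)$ and $\Rips(Q)$. Concretely, choose any functions $f\colon P\to Q$ and $g\colon Q\to P$ whose graphs are contained in $C$. The distortion bound ensures that for every $t$, $f$ sends each simplex of $\Rips(P)_t$ to a simplex of $\Rips(Q)_{t+\epsilon'}$, and similarly for $g$. Moreover, by convexity of the Rips condition, both $g\circ f$ and $\id_P$ land inside $\Rips(P)_{t+2\epsilon'}$ on $\Rips(P)_t$, and they are contiguous there, so they induce the same map on homology; symmetrically for $f\circ g$. Applying $H_i$ therefore produces linear maps
\[
\phi_t\colon H_i \Rips(P)_t \to H_i \Rips(Q)_{t+\epsilon'}, \qquad
\psi_t\colon H_i \Rips(Q)_t \to H_i \Rips(P)_{t+\epsilon'},
\]
that commute with the internal structure maps of the modules and whose compositions equal the length-$2\epsilon'$ transition maps of the respective modules. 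In other words, the 1-D persistence modules $H_i\Rips(P)$ and $H_i\Rips(Q)$ are $\epsilon'$-interleaved.

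The final step is to apply the algebraic stability (isometry) theorem for 1-D persistence, which asserts that an $\epsilon'$-interleaving of finitely presented 1-D persistence modules forces their barcodes to be at bottleneck distance at most $\epsilon'$. This gives $d_B(\Bi{P},\Bi{Q}) \leq \epsilon'$, and letting $\epsilon' \downarrow \epsilon$ finishes the proof.

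The main obstacle is the construction of the interleaving from the correspondence, and in particular the verification that the two round-trip compositions $g\circ f$ and $f\circ g$ act correctly on the filtered complexes. This requires checking carefully that contiguous simplicial maps induce the same map in homology in the filtered setting, i.e., that the relevant diagrams genuinely commute on the nose after passage to $H_i$, not merely up to homotopy. Everything else --- the correspondence characterization of $d_{GH}$ and the algebraic stability black box --- is standard and quotable from the cited literature.
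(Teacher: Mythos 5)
Your proposal is a correct reconstruction of the standard proof of this theorem, but note that the paper itself does not prove \cref{Thm:GH_Stability} at all — it is stated as a cited result from \cite{chazal2009gromov}, so there is no in-paper proof to compare against. Your argument (pass from $d_{GH}$ to a correspondence with distortion $< 2\epsilon'$, pick simplicial maps $f,g$ with graphs in the correspondence, check via the clique condition on Rips complexes that $g\circ f$ and $\id_P$ are contiguous after shifting by $2\epsilon'$, conclude an $\epsilon'$-interleaving of $H_i\Rips(P)$ and $H_i\Rips(Q)$, and invoke algebraic stability) is precisely the argument in the cited reference, and the key verification you flag — that contiguous maps induce equal maps on $H_i$, so the interleaving diagrams commute on the nose after applying homology — is indeed the only place real work happens; the rest is, as you say, quotable. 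One minor phrasing nit: ``convexity of the Rips condition'' is nonstandard; what you are using is that Vietoris--Rips is a clique/flag complex, so membership of $\sigma\cup (g\circ f)(\sigma)$ in $\Rips(P)_{t+2\epsilon'}$ reduces to pairwise distance bounds, each of which follows from the distortion bound applied to the appropriate pairs in $C$.
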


See also \cite{chazal2014persistence} for a generalization of this theorem to compact metric spaces.  

\subsection{Multidimensional Persistent Homology}\label{Sec:Multi_D_PH_Intro}
In many cases of interest, a 1-D filtration is not sufficient to capture the structure of interest in our data.  In such cases, we are naturally led to associate to our data an $n$-D filtration, for some $n\geq 2$.  As in the 1-D case, applying homology with field coefficients to an $n$-D filtration yields an $n$-D persistence module.  The question then arises of whether we can associate to this $n$-D filtration an $n$-dimensional generalization of a barcode in a way that is useful for data analysis.

In what follows, we motivate the study of multidimensional persistence by describing one natural way that bifiltrations arise in the study of finite metric spaces; other ways that bifiltrations arise in TDA applications are discussed, for example,  in \cite{carlsson2009theory}, \cite{lesnick2014theory}, and \cite{chazal2011geometric}.  We then discuss the algebraic difficulties involved in defining a multidimensional generalization of barcode.
\paragraph{Bifiltrations of Finite Metric Spaces}
In spite of \cref{Thm:GH_Stability}, which tells us that the barcodes $\Bi{P}=\Bi{H_i\Rips(P)}$ of a finite metric space $P$ are well behaved in a certain sense, these invariants have a couple of important limitations.  First, they are highly unstable to the addition and removal of outliers; see \cref{fig:barcode_problems} for an illustration.  Second, and relatedly, when $P$ exhibits non-uniformities in density, the barcodes $\Bi{P}$ can be insensitive to interesting structure in the high density regions of $P$; see \cref{fig:barcode_problems}.

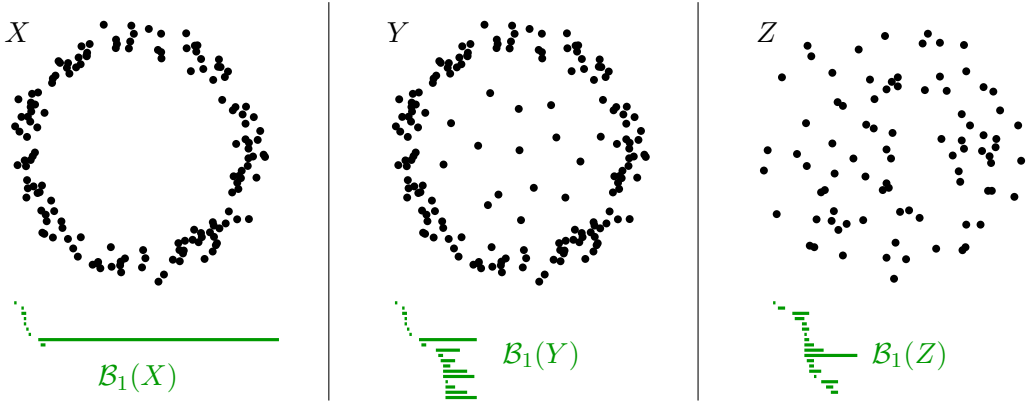
\begin{figure}[ht]
	\begin{center}
		\begin{tikzpicture}
			\tikzstyle{h1} = [green!60!black,line width=1.2pt]  
			\def\s{3.2}  
			\def\v{-0.07}  
			\def\yoff{-2}  
			\def\xoff{1.87}  
			\matrix[column sep=.25in] {
				\node at (-1.6,1.6) {$X$};
				\foreach \p in {(257:1.56), (126:1.40), (141:1.47), (62:1.61), (312:1.43), (102:1.42), (294:1.47), (131:1.61), (117:1.49), (95:1.39), (68:1.71), (44:1.39), (15:1.48), (282:1.66), (130:1.47), (139:1.50), (310:1.61), (220:1.66), (263:1.42), (156:1.74), (327:1.44), (5:1.31), (240:1.60), (272:1.53), (357:1.47), (211:1.50), (25:1.43), (101:1.47), (157:1.55), (52:1.41), (346:1.31), (49:1.64), (291:1.55), (302:1.67), (250:1.55), (46:1.58), (92:1.58), (159:1.52), (304:1.44), (57:1.29), (254:1.31), (295:1.44), (65:1.53), (170:1.60), (182:1.40), (61:1.63), (202:1.60), (93:1.69), (137:1.50), (236:1.54),
						(198:1.54), (155:1.51), (321:1.50), (252:1.62), (326:1.59), (306:1.27), (273:1.30), (221:1.65), (340:1.42), (261:1.52), (317:1.48), (32:1.32), (343:1.38), (85:1.65), (123:1.40), (202:1.41), (166:1.38), (151:1.61), (180:1.37), (157:1.35), (248:1.61), (187:1.50), (155:1.57), (79:1.61), (149:1.55), (164:1.49), (298:1.33), (162:1.54), (358:1.53), (317:1.36), (198:1.46), (316:1.40), (262:1.61), (23:1.56), (118:1.46), (294:1.32), (279:1.74), (215:1.36), (184:1.43), (225:1.60), (217:1.52), (353:1.47), (184:1.51), (2:1.47), (231:1.48), (186:1.61), (10:1.65), (118:1.60), (114:1.58), (358:1.69),
						(288:1.50), (209:1.32), (347:1.38), (131:1.58), (131:1.39), (7:1.44), (225:1.42), (163:1.62), (24:1.38), (348:1.61), (312:1.51), (101:1.53), (250:1.57), (60:1.55), (350:1.40), (41:1.53), (359:1.67), (42:1.61), (6:1.60), (311:1.60), (307:1.65), (329:1.72), (307:1.41), (20:1.48), (5:1.48), (172:1.49), (346:1.41), (199:1.35), (194:1.52), (76:1.29), (261:1.47), (214:1.48), (94:1.48), (274:1.41), (122:1.49), (202:1.44), (308:1.36), (128:1.51), (236:1.36), (105:1.76), (337:1.36), (17:1.61), (302:1.37), (56:1.50), (47:1.56), (77:1.37), (168:1.67), (118:1.45), (222:1.37), (79:1.39), (293:1.41)} 
					\fill \p circle (1.5pt);
					
				\node[green!60!black] at (0,-3) {$\B{}_1(X)$};
				\foreach [count=\i from 0] \a/\b in {0.07/0.08, 0.10/0.11, 0.10/0.12, 0.11/0.12, 0.11/0.12, 0.12/0.13, 0.13/0.14, 0.17/1.17, 0.18/0.20}
					\draw[h1] (\a*\s - \xoff, \i*\v + \yoff) -- (\b*\s - \xoff, \i*\v + \yoff);
			&
				\draw (0,-3.3)--(0,2);
			&
				\node at (-1.6,1.6) {$Y$};
				\foreach \p in {(257:1.56), (126:1.40), (141:1.47), (62:1.61), (312:1.43), (102:1.42), (294:1.47), (131:1.61), (117:1.49), (95:1.39), (68:1.71), (44:1.39), (15:1.48), (282:1.66), (130:1.47), (139:1.50), (310:1.61), (220:1.66), (263:1.42), (156:1.74), (327:1.44), (5:1.31), (240:1.60), (272:1.53), (357:1.47), (211:1.50), (25:1.43), (101:1.47), (157:1.55), (52:1.41), (346:1.31), (49:1.64), (291:1.55), (302:1.67), (250:1.55), (46:1.58), (92:1.58), (159:1.52), (304:1.44), (57:1.29), (254:1.31), (295:1.44), (65:1.53), (170:1.60), (182:1.40), (61:1.63), (202:1.60), (93:1.69), (137:1.50), (236:1.54),
						(198:1.54), (155:1.51), (321:1.50), (252:1.62), (326:1.59), (306:1.27), (273:1.30), (221:1.65), (340:1.42), (261:1.52), (317:1.48), (32:1.32), (343:1.38), (85:1.65), (123:1.40), (202:1.41), (166:1.38), (151:1.61), (180:1.37), (157:1.35), (248:1.61), (187:1.50), (155:1.57), (79:1.61), (149:1.55), (164:1.49), (298:1.33), (162:1.54), (358:1.53), (317:1.36), (198:1.46), (316:1.40), (262:1.61), (23:1.56), (118:1.46), (294:1.32), (279:1.74), (215:1.36), (184:1.43), (225:1.60), (217:1.52), (353:1.47), (184:1.51), (2:1.47), (231:1.48), (186:1.61), (10:1.65), (118:1.60), (114:1.58), (358:1.69),
						(288:1.50), (209:1.32), (347:1.38), (131:1.58), (131:1.39), (7:1.44), (225:1.42), (163:1.62), (24:1.38), (348:1.61), (312:1.51), (101:1.53), (250:1.57), (60:1.55), (350:1.40), (41:1.53), (359:1.67), (42:1.61), (6:1.60), (311:1.60), (307:1.65), (329:1.72), (307:1.41), (20:1.48), (5:1.48), (172:1.49), (346:1.41), (199:1.35), (194:1.52), (76:1.29), (261:1.47), (214:1.48), (94:1.48), (274:1.41), (122:1.49), (202:1.44), (308:1.36), (128:1.51), (236:1.36), (105:1.76), (337:1.36), (17:1.61), (302:1.37), (56:1.50), (47:1.56), (77:1.37), (168:1.67), (118:1.45), (222:1.37), (79:1.39), (293:1.41), 
						(0.00,0.58), (0.01,0.03), (0.43,0.63), (0.5,0.2), (1.09,0.21), (0.82,-0.12), (0.62,-0.60), (0.03,-0.90), (0.2,-0.47), (-1.0,-0.16),(-0.9,0.39), (-0.54,0.09),(-0.38,0.79), (-0.41,-0.70), (-0.27,-0.56)} 
					\fill \p circle (1.5pt);
				\node[green!60!black] at (0.3,-2.7) {$\B{}_1(Y)$};
				\foreach [count=\i from 0] \a/\b in {0.07/0.08, 0.10/0.11, 0.10/0.12, 0.11/0.12, 0.11/0.12, 0.12/0.13, 0.13/0.14, 0.17/0.41, 0.18/0.20, 0.24/0.34, 0.25/0.27, 0.26/0.32, 0.27/0.30, 0.27/0.37, 0.27/0.40, 0.28/0.29, 0.28/0.32, 0.28/0.37, 0.28/0.41}
					\draw[h1] (\a*\s - \xoff, \i*\v + \yoff) -- (\b*\s - \xoff, \i*\v + \yoff);
			&
				\draw (0,-3.3)--(0,2);
			&
				\node at (-1.6,1.6) {$Z$};
				\foreach \p in {(1.50,0.56), (0.72,0.18), (-0.66,0.67), (-0.32,-0.90), (0.06,0.94), (0.02,-0.47), (0.05,-0.08), (-0.97,-1.26), (-0.71,-0.78), (1.10,0.36), (-1.07,-0.18), (-0.54,-0.95), (-0.65,-1.03), (0.87,-0.04), (1.28,0.87), (-0.59,0.62), (-1.59,0.03), (0.99,0.51), (0.99,0.16), (-0.70,-0.28), (-0.23,0.97), (-0.03,1.55), (-1.03,-1.24), (-0.88,-0.53), (-0.73,0.12), (1.05,1.08), (1.36,-0.05), (-0.01,0.84), (-0.45,-0.08), (1.74,0.36), (0.47,1.45), (-0.06,-0.50), (0.43,-0.87), (0.97,-0.39), (-0.01,0.04), (1.28,0.23), (-0.59,-1.37), (0.65,-1.28), (0.14,1.00), (-0.38,1.37), (1.24,-0.96), (1.02,1.43), 
						(-0.01,-1.40), (-0.01,-0.41), (0.09,-1.68), (1.02,0.05), (1.34,-0.51), (-1.06,1.42), (0.50,0.83), (0.79,0.82), (-0.74,-0.91), (1.26,0.02), (1.39,-0.51), (-1.20,-0.02), (-0.73,1.06), (0.29,-0.77), (0.07,0.26), (-1.08,0.39), (1.44,0.07), (1.05,-0.96), (1.45,0.18), (1.04,-1.26), (1.78,-0.11), (0.70,1.01), (-0.17,0.71), (0.17,-0.79), (0.95,-0.25), (0.11,-1.20), (0.54,1.58), (1.00,-1.30), (-0.18,0.32), (-0.94,-0.89), (-0.83,-0.50), (-0.28,0.37), (1.38,-0.14), (-0.29,-0.32), (1.69,-0.59), (-1.64,-0.24), (0.14,0.88), (-1.47,-0.82), (0.70,1.46), (0.25,-1.42), (-1.01,1.28), (1.41,0.81), (-1.4,1.0)}
					\fill \p circle (1.5pt);
				\node[green!60!black] at (0.3,-2.7) {$\B{}_1(Z)$};
				\foreach [count=\i from 0] \a/\b in {0.11/0.12, 0.13/0.16, 0.19/0.26, 0.20/0.24, 0.23/0.25, 0.23/0.26, 0.24/0.25, 0.24/0.26, 0.24/0.28, 0.24/0.32, 0.24/0.46, 0.25/0.28, 0.26/0.28, 0.26/0.31, 0.28/0.29, 0.31/0.38, 0.33/0.36, 0.35/0.38}
					\draw[h1] (\a*\s - \xoff, \i*\v + \yoff) -- (\b*\s - \xoff, \i*\v + \yoff);
			\\};
		\end{tikzpicture}
	\end{center}
	\caption{Barcodes are unstable with respect to addition of outliers, and can be insensitive to interesting structure in high density regions of the data.  Thus, though the point clouds $X$ and $Y$ share a densely sampled circle and differ only by the addition of a few low-density outliers, $\protect\B{}_1(X)$ and $\protect\B{}_1(Y)$ are quite different from one another: $\protect\B{}_1(X)$ has a long interval not appearing in $\protect\B{}_1(Y)$.  In contrast, the point cloud $Z$ contains no densely sampled circle, but the longest intervals of $\protect\B{}_1(Y)$ and $\protect\B{}_1(Z)$ are of similar length.}	
	\label{fig:barcode_problems}
\end{figure}

To address these issues, \cite{carlsson2009theory} proposed that we associate a bifiltration to $P$, and study the persistent homology of this bifiltration.  We describe here both the proposal of  \cite{carlsson2009theory}, which depends on a choice of bandwidth parameter, and a simple parameter-free variant which, to the best of our knowledge, has not appeared elsewhere.

The construction of the bifiltration proposed in \cite{carlsson2009theory} depends on a choice of \emph{codensity function} $\gamma:P\to \R$, a function on $P$ whose value is high at dense points and low at outliers \cite{carlsson2009theory,wasserman2004all}.  For example, \cite{carlsson2009theory} proposes to take $\gamma$ to be a $K$-nearest neighbors density function; in general, the choice of a density function depends on a choice of a bandwidth parameter $K$.

Given $\gamma$, we may define a 2-dimensional filtration $\Rips(\gamma)$ by taking
\[\Rips(\gamma)_{(a,b)}=\Rips(\gamma^{-1}(-\infty,a])_b.\]
If $a\geq a'$ and $b\leq b'$, then $\Rips(\gamma)_{(a,b)}\subseteq \Rips(\gamma)_{(a',b')}$.  Thus, the collection of all such simplicial complexes, together with these inclusions, yields a functor \[\Rips(\gamma):\RCat^{\op}\times \RCat\to \Simp.\]  $\RCat^{\op}\times \RCat$ is naturally isomorphic to $\R^2$---for example there is an isomorphism sending each object $(a,b)$ to $(-a,b)$---so upon the identification of these two categories, we may regard $\Rips(\gamma)$ as a bifiltration.  Note that the definition of $\Rips(\gamma)$ in fact makes sense for \emph{any} function $\gamma:P\to \R$.  As discussed in \cite{carlsson2009theory}, there are numerous possibilities for interesting choices of $\gamma$ in our study of data, aside from density estimates.

To obtain a parameter-free variant of $\Rips(\gamma)$, for $(a,b)\in \R^2$ we first define the graph $G_{a,b}$ to be the subgraph of the 1-skeleton of $\Rips(P)_{b}$ consisting of vertices whose degree is at least $a$.  We then define $\BRips(P)_{(a,b)}$ to be the maximal simplicial complex with 1-skeleton $G_{a,b}$.  As above, upon the identification of $\RCat^{\op}\times \RCat$ with $\RCat^2$, the collection of simplicial complexes $\{\BRips(P)_{(a,b)}\}_{(a,b)\in \R^2}$ defines a bifiltration $\BRips(P)$.

$H_i \BRips(P)$ is a richer algebraic invariant than $H_i\Rips(P)$, and in particular is sensitive to interesting structure in high density regions of $P$ in a way that $H_i \Rips(P)$ is not.  

\originalparagraph{Barcodes of Multi-D Persistence Modules?}
We now explain the algebraic difficulties with defining the barcode of a $n$-D persistence module for $n>1$, closely following a discussion in \cite{lesnick2014theory}.

As for the case $n=1$, for $n> 1$, finitely presented $n$-D persistence modules also decompose in an essentially unique way into indecomposable summands; this follows easily from a standard formulation of the Krull-Schmidt theorem \cite{atiyah1956krull}.  However, it is a consequence of standard quiver theory results, as described for example in \cite{derksen2005quiver}, that the set of isomorphism classes of indecomposable multi-D persistence modules is, in contrast to the 1-D case, extremely complicated.\footnote{For example, there is a fully faithful functor from the category of representations of the wild quiver $E_9$ to $\kvect^{\RCat^2}$, which maps indecomposables to indecomposables; see also \cite{carlsson2009theory} for a study of the possible isomorphism types of a multidimensional persistence module.}  In particular, for $n\geq 2$ the dimension of a vector space in a finitely presented indecomposable $n$-D persistence module can be arbitrarily large.  Thus, while in principle we could define the barcode of a multi-D persistence module to be its multiset of isomorphism classes of indecomposables, as in the 1-D case, for $n>1$ this invariant will typically not be useful for data visualization and exploration in the way that the 1-D barcode is.

In general, it seems that for the purposes of TDA, there is no entirely satisfactory way of defining the barcode of an $n$-D persistence module for $n>1$, even if we consider invariants which are incomplete (i.e., invariants which can take the same value on two non-isomorphic modules.)

\paragraph{Three Simple Invariants of a Multidimensional Persistence Module}
Nevertheless, it is possible to define simple, useful, computable invariants of a multidimensional persistence module.  Our tool RIVET computes and visualizes three such invariants of a 2-D persistence module $M$:
\begin{enumerate}
\item The \emph{dimension function of $M$}, i.e., the function which maps $a\in \R^2$ to $\dim M_a$.
\item The \emph{fibered barcode} of $M$, i.e., the collection of barcodes of 1-D affine slices of $M$.
\item The (multigraded) Betti numbers of $M$. 
\end{enumerate}
The dimension function of $M$ is a simple, intuitive, and easily visualized invariant, but is unstable and provides no information about persistent features in $M$.  The next two subsections introduce the fibered barcode and the multigraded Betti numbers.

\subsection{The Rank Invariant and Fibered Barcodes}\label{sec:rank_inv_fibered_barcodes}

\paragraph{The Rank Invariant}
For $n\geq 1$, let $\H^n\subseteq \R^n\times \R^n$ denote the set of pairs $(s,t)$ with $s\leq t$, let $\Nbb$ denote the non-negative integers,\nomenclature[N]{$\Nbb$}{non-negative integers} and let $M$ be an $n$-D persistence module.  Following \cite{carlsson2009theory}, we define \[\rk(M):\H^n\to \Nbb,\] the \emph{rank invariant of $M$}, by $\rk(M)(a,b)=\rank\, M(a,b)$.\nomenclature[rk]{$\rk(M)$}{rank invariant of $M$}
Using the structure theorem for 1-D persistence modules \cite{crawley2012decomposition}, it's easy to check that for $M$ a \pfd 1-D persistence module $M$, $\rk(M)$ and $\B{M}$ determine each other \cite{carlsson2009theory}.

For $M$ an $n$-D persistence module, $n\geq 2$, $\rk(M)$ does not encode the isomorphism class of $M$; see example \cref{Ex:Rank_Invariant_Not_Complete}.  Nevertheless, the rank invariant does capture interesting ``first order information" about the structure of a persistence module.

\cite{cerri2013betti} observed that if $M$ is a \pfd $n$-D persistence module, $\rk(M)$ carries the same data as a $(2n-2)$-parameter family of barcodes, each obtained by restricting $M$ to an affine line in $\R^n$; we call this parameterized family of barcodes the \emph{fibered barcode} of $M$.  In particular, if $M$ is a 2-D persistence module, the fibered barcode of $M$ is a 2-parameter family of barcodes.  
In what follows, we give the definition of a fibered barcode of a 2-D persistence module.

\paragraph{The Space of Affine Lines in 2-D with Non-Negative Slope}
Let $\bar \L$ denote the collection of all (unparameterized) lines in $\R^2$ with non-negative (possibly infinite) slope, and let $\L\subset \bar \L$ denote the collection of all (unparameterized) lines in $\R^2$ with non-negative, finite slope. \nomenclature[L]{$\bar \L$}{space of affine lines in $\R^2$ with non-negative slope} \nomenclature[L]{$\L$}{space of affine lines in $\R^2$ with non-negative, finite slope} 

Note that $\bar\L$ is a submanifold (with boundary) of an affine Grassmannian of dimension 2.   With the induced topology, $\bar\L$ is homeomorphic to $[0,1]\times \R$.  In this sense, it is appropriate to think of $\bar \L$ as a 2-parameter family of lines.  

A standard point-line duality, to be described in \cref{sec:arr_def}, provides an identification between $\L$ and the half-plane $[0,\infty)\times \R$; we will make extensive use of this point-line duality later in the paper.  The duality does not extend to $\bar \L$ (i.e., to vertical lines) in a natural way.

\paragraph{Definition of the Fibered Barcode}
For $L\in \bar \L$, let $\LCat$ denote the associated full subcategory of $\RCat^2$.  The inclusion $L\hookrightarrow \R^2$ induces a functor $i_L:\LCat\to \RCat^2$.  For $M$ a \pfd 2-D persistence module, we define $M^L=M\circ i_L$.  

Define an \emph{interval} $I$ in $L$ to be a non-empty subset of $L$ such that whenever $a<b<c\in L$ and $a,c\in I$, we also have that $b\in I$.  As $\LCat$ is isomorphic to $\RCat$, the structure theorem for \pfd 1-D persistence modules yields a definition of the barcode $\B{M^L}$ as a collection of intervals in $L$.

We define $\B{M}$, the \emph{fibered barcode of $M$}, to be the map which sends each line $L\in \bar\L$ to the barcode $\B{M^L}$.  \nomenclature[B]{$\B{M}$}{barcode of a 1-D persistence module, page  \pageref{BarcodeDefRef}; fibered barcode of a 2-D persistence module}  

\begin{proposition}[\cite{cerri2013betti}]\label{Rank_Vs_Fibered_Barcode}
$\B{M}$ and $\rk(M)$ determine each other.
\end{proposition}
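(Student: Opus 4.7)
The plan is to prove both implications separately, in each case reducing the 2-D statement to the well-known 1-D fact that for a \pfd 1-D persistence module $N$, the invariants $\rk(N)$ and $\B{N}$ determine each other. The underlying geometric observation is that every $L\in\bar\L$, equipped with the total order induced by any orientation that moves upward-and-to-the-right, is a totally ordered subset of $(\R^2,\leq)$, and moreover the order on $L$ agrees with the restriction of the product order. Equivalently, for $s,t\in L$, we have $s\leq_L t$ in $\LCat$ if and only if $s\leq t$ in $\RCat^2$. This is exactly where non-negative slope is used.

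\textbf{Direction 1: $\B{M}$ determines $\rk(M)$.} Fix $(a,b)\in\H^2$, so $a\leq b$ in $\RCat^2$. Because $b-a$ has both coordinates non-negative, there exists at least one line $L\in\bar\L$ through $a$ and $b$ (any line of appropriate non-negative slope will do; if $a=b$ take any line through $a$, and if $a$ and $b$ share a coordinate take a horizontal or vertical line as appropriate). By the observation above, $M^L(a,b)=M(a,b)$, so $\rk(M)(a,b)=\rank M^L(a,b)$. Applying the 1-D structure theorem to $M^L$, this rank equals the number of intervals in $\B{M^L}$ (counted with multiplicity) that contain both $a$ and $b$, which is read off directly from $\B{M}$.

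\textbf{Direction 2: $\rk(M)$ determines $\B{M}$.} Fix $L\in\bar\L$. For every $s\leq_L t$ in $\LCat$, the observation again gives $\rk(M^L)(s,t)=\rank M^L(s,t)=\rank M(s,t)=\rk(M)(s,t)$. Thus $\rk(M^L)$ is simply the restriction of $\rk(M)$ to pairs of points on $L$, and is determined by $\rk(M)$. Since $M$ is \pfd, so is $M^L$, and the 1-D fact recalled in \cref{sec:rank_inv_fibered_barcodes} recovers $\B{M^L}$ from $\rk(M^L)$. Doing this for every $L\in\bar\L$ yields $\B{M}$.

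There is no real obstacle here beyond bookkeeping. The only mildly delicate point is making sure that the equivalence between the order on $L$ and the restricted product order on $\R^2$ is stated carefully enough to cover the boundary cases of horizontal and vertical lines (which is why the proposition is stated over $\bar\L$ rather than $\L$), and to ensure that for every $(a,b)\in\H^2$ some line in $\bar\L$ passes through both endpoints. Once those are noted, the proof reduces to invoking the 1-D result pointwise in the parameter $L$.
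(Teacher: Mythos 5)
Your proof is correct and follows essentially the same route as the paper: in both directions you reduce to the 1-D fact that $\rk(N)$ and $\B{N}$ determine each other by observing that every comparable pair $(a,b)\in\H^2$ lies on some line $L\in\bar\L$, and that $\rk(M^L)$ is the restriction of $\rk(M)$ to such pairs on $L$. Your version is a bit more explicit about the edge cases (e.g.\ $a=b$, or $a$ and $b$ sharing a coordinate), where the paper's one-line claim of a \emph{unique} line through $a$ and $b$ is in fact slightly imprecise, so your extra care is a mild improvement rather than a divergence.
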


\begin{proof}
For each $a\leq b\in \R^2$, there exists a unique line $L\in \bar \L$ such that $a,b\in L$.  Clearly then, $\rk(M)$ and the collection of 1-D rank invariants $\{\rk(M^L)\mid L\in \bar \L\}$ determine each other.  As noted above, for $N$ a \pfd 1-D persistence module, $\B{N}$ and $\rk(N)$ determine each other.  The result follows.
\end{proof}

\paragraph{Stability of the Fibered Barcode}
Adapting arguments introduced in \cite{cerri2013betti} and \cite{cerri2011new}, a recent note by Claudia Landi \cite{landi2014rank} establishes that $\B{M}$ is \emph{stable} in two senses.   The results of \cite{landi2014rank} in fact hold for persistent homology modules of arbitrary dimension.  However, in this paper we are primarily interested in the 2-D  case.

In 2-D, Landi's first stability result says that for $L$ a line which is neither horizontal nor vertical, the map $M\to \B{M^L}$ is Lipschitz continuous with respect to the interleaving distance on 2-D persistence modules and the \emph{bottleneck distance} on barcodes.  The Lipschitz constant $c$ depends on the slope of $L$; $c=1$ when the slope of $L$ is 1, and $c$ grows larger as the slope of $L$ deviates from 1,  tending towards infinity as the slope of $L$ approaches 0 or $\infty$.  We refer to this stability result as the \emph{external stability} of $\B{M}$.  

 \cite{landi2014rank} also presents an \emph{internal} stability result for $\B{M}$ which tells us that when $M$ is finitely presented, the map $L\to\B{M^L}$ is continuous, in a suitable sense, at lines $L$ which are neither horizontal nor vertical.  In fact the result says something stronger, which put loosely, is that the closer the slope of the line $L$ is to 1, the more stable $\B{M^L}$ is to perturbations of $L$.  
 
In sum, the stability results of \cite{landi2014rank} tell us that for $M$ a 2-D persistence module and $L$ a line which is neither horizontal or vertical, the barcode $\B{M^L}$ is robust to perturbations both of $M$ and $L$; the more diagonal $L$ is, the more robust $\B{M^L}$ is.

\subsection{Multigraded Betti Numbers}
We next briefly introduce (multigraded) Betti numbers (called bigraded Betti numbers in the case of 2-D persistence); see \cref{Sec:Betti_Numbers} for a precise definition and examples.

For $M$ a finitely presented $n$-D persistence module and $i\geq 0$, the $i^{\rm{th}}$ (graded) Betti number of $M$ is a function $\xi_i(M):\R^n\to \Nbb$. \nomenclature[xi]{$\xi_i(M)$}{$i^{\rm{th}}$ (graded) Betti number of $M$}
It follows from the \emph{Hilbert Basis Theorem}, a classical theorem of commutative algebra, that $\xi_i(M)$ is identically $0$ for $i>n$, so $\xi_i(M)$ is only of interest for $i\leq n$.  

We will be especially interested in $\xi_0(M)$ and $\xi_1(M)$ in this paper.  For $a\in \R^n$, $\xi_0(M)(a)$ and $\xi_1(M)(a)$ are the number of generators and relations, respectively, at index $a$ in a minimal presentation for $M$; see \cref{Sec:MinPresentations}.   $\xi_2(M)$ has an analogous interpretation in terms of a minimal resolution of $M$.

Neither of the invariants $\B{M}$ nor $\{\xi_i(M) \mid i=0,1,\ldots,n\}$ determines the other, but the invariants are intimately connected.  This connection in the case that $n=2$ plays a central role in the present work.

One of the main mathematical contributions of this project is a fast algorithm for computing the multigraded Betti numbers of a 2-D persistence module $M$; our algorithm is described in the companion paper \cite{lesnick2014betti}.
\subsection{The RIVET Visualization Paradigm}\label{Sec:Intro_Visualization}

\paragraph{Overview}
We propose to use the fibered barcode in exploratory data analysis in much the same way that 1-D barcodes are typically used.  In particular, this requires us to have a good way of visualizing the fibered barcode.  Though discretizations of fibered barcodes have been used in shape matching applications \cite{cerri2014comparing}, to the best of our knowledge, there is no prior work on visualization of fibered barcodes.

This work introduces a paradigm, called RIVET, for the interactive visualization of the fibered barcode of a 2-D persistence module, and presents an efficient computational framework for implementing this paradigm.  Our paradigm also provides for the visualization of the dimension function and bigraded Betti numbers of the module.  The visualizations of the three invariants complement each other, and work in concert: Our visualizations of the dimension function and bigraded Betti numbers provide a coarse, global view of the structure of the persistence module, while our visualization of the fibered barcodes, which focuses on a single barcode at a time, provides a sharper but more local view.  

We now give a brief description of our RIVET visualization paradigm.  Additional details are in \cref{Sec:Interface_Details}.

Given a 2-D persistence module $M$, RIVET allows the user to interactively select a line $L\in \R^2$ via a graphical interface; the software then displays the barcode $\B{M^L}$.  As the user moves the line $L$ by clicking and dragging, the displayed barcode is updated in real time.  

The RIVET interface consists of two main windows, the \emph{Line Selection Window} (left) and the \emph{Persistence Diagram Window} (right).  \cref{fig:RIVET_screenshots} shows screenshots of RIVET for a single choice of $M$ and four different lines $L$.

\begin{sidewaysfigure}
	\begin{center}
		\includegraphics[width=3.7in]{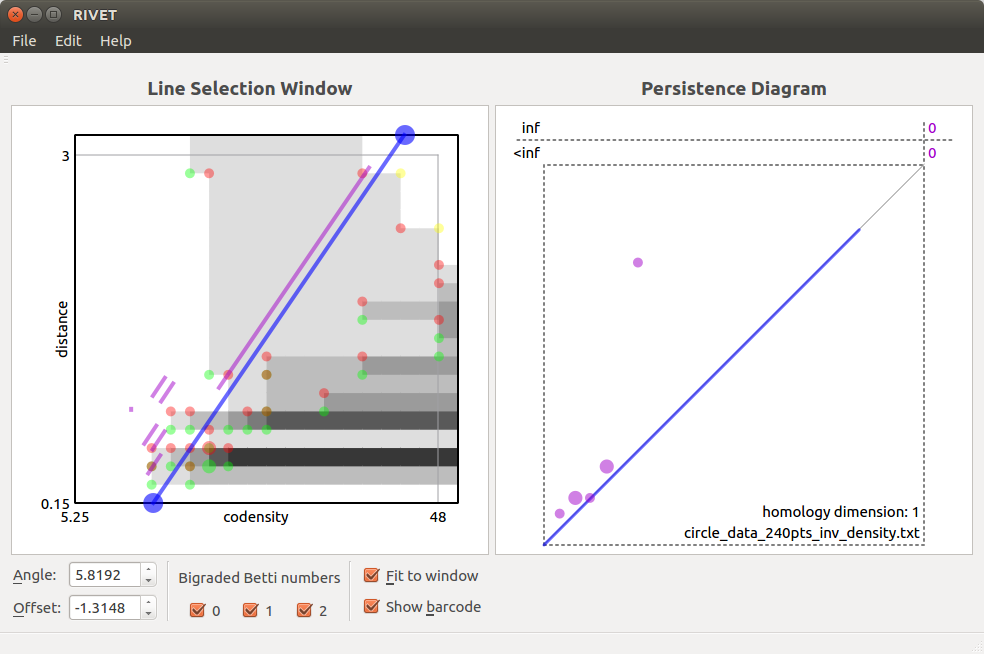} \hspace{0.2in}
		\includegraphics[width=3.7in]{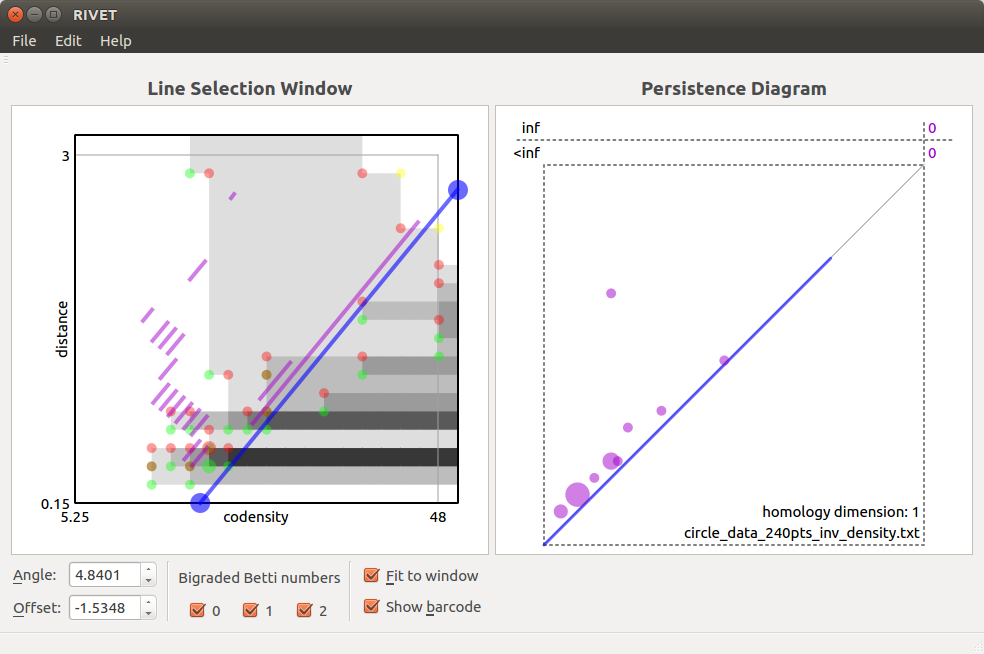}
		
		\vspace{0.2in}
		
		\includegraphics[width=3.7in]{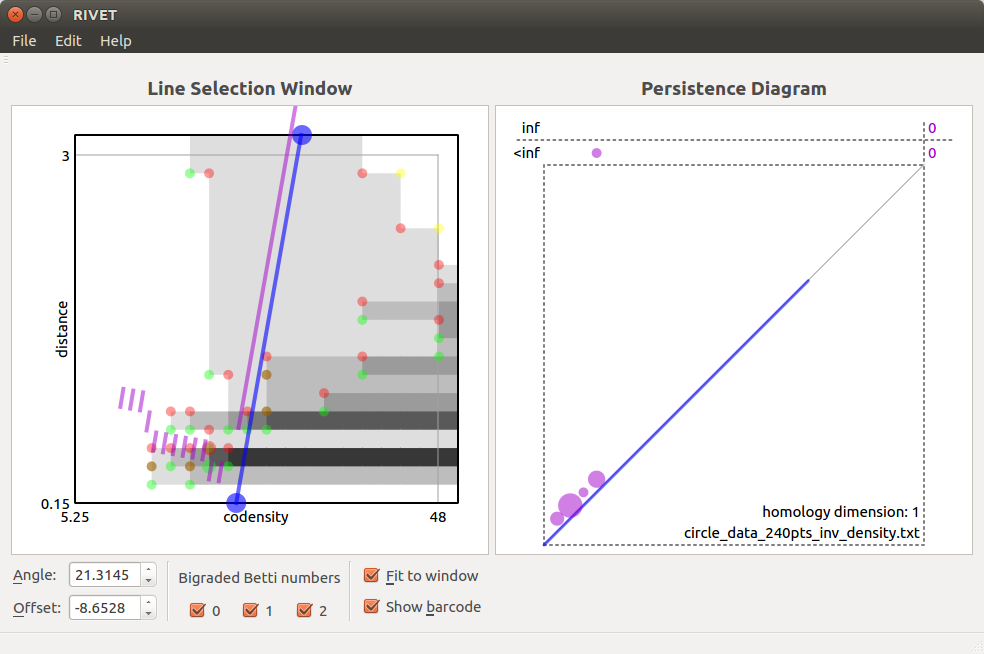} \hspace{0.2in}
		\includegraphics[width=3.7in]{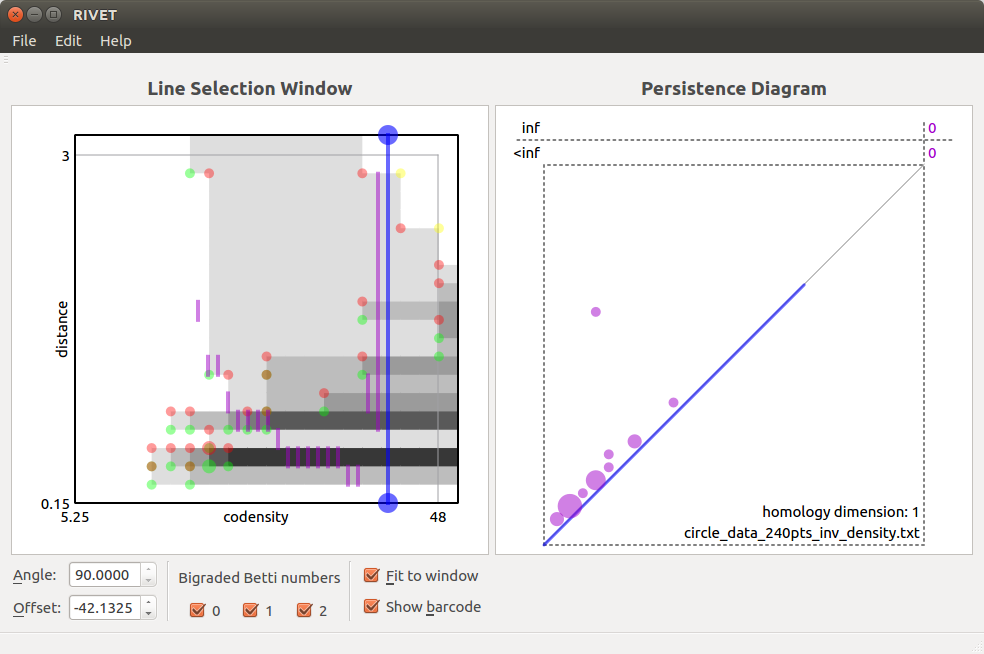}
    \end{center}
	\caption{Screenshots of RIVET for a single choice of 2-D persistence module $M$ and four different lines $L$.  RIVET provides visualizations of the dimension of each vector space in $M$ (greyscale shading); the $0^{\text{th}}$, $1^{\text{st}}$, and $2^{\text{nd}}$ bi-graded Betti numbers of $M$ (green, red, and yellow dots); and the barcodes of the 1-D slices $M^L$, for each $L$ (in purple).}
    \label{fig:RIVET_screenshots}
\end{sidewaysfigure}

\paragraph{The Line Selection Window}
For a given finitely presented persistence module $M$, the Line Selection Window plots a rectangle in $\R^2$ containing the union of the supports of functions $\xi_i(M)$, $i\in \{0,1,2\}$.  

The greyscale shading at a point $a$ in this rectangle represents $\dim M_a$:  $a$ is unshaded when $\dim M_a=0$, and larger $\dim M_a$ corresponds to darker shading.  Scrolling the mouse over $a$ brings up a popup box which gives the precise value of $\dim M_a$.

Points in the supports of $\xi_0(M)$, $\xi_1(M)$, and $\xi_2(M)$ are marked with green, red, and yellow dots, respectively.  
The area of each dot is proportional to the corresponding function value.  The dots are translucent, so that, for example, overlaid red and green dots appear brown on their intersection.  This allows the user to read the values of the Betti numbers at points which are in the support of more than one of the functions.

The line selection window contains a blue line of non-negative slope, with endpoints on the boundary of the displayed region of $\R^2$.  This line represents a choice of $L \in \bar \L$.  The intervals in the barcode $\B{M^L}$ are displayed in purple, offset from the line $L$ in the perpendicular direction.  

\paragraph{The Persistence Diagram Window}
In the Persistence Diagram Window (right), a persistence diagram representation of $\B{M^L}$ is displayed.  To represent $\B{M^L}$ via a persistence diagram, we need to choose an order-preserving parameterization $\gamma_L:\R\to L$ of $L$, which we regard as a functor; we then display $\B{M^L\circ \gamma_L}$.  Our choice of the parameterizations $\gamma_L$ is described in \cref{Sec:Interface_Details}.

As with the multi-graded Betti numbers, the multiplicity of a point in the persistence diagram is indicated by the area of the corresponding dot. 

\paragraph{Interactivity}
The user can click and drag the blue line in the left window with the mouse, thereby changing the choice of $L \in \bar \L$.  Clicking the blue line away from its endpoints and dragging moves the line in the direction perpendicular to its slope, while keeping the slope constant.  The clicking and dragging an endpoint of the line moves that endpoint while keeping the other fixed; this allows the user to change the slope of the line.

As the line moves, the both the interval representation of $\B{M^L}$ in the left window and its persistence diagram representation in the right window are updated in real time.

\subsection{Querying the Fibered Barcode}\label{Sec:Computational_Underpinnings}
Our algorithm for fast computation of Betti numbers of 2-D persistence modules, described in \cite{lesnick2014betti}, performs an efficient computation of the dimension function of $M$ as a subroutine.  Thus, in explaining the computational underpinnings of RIVET, we will focus on the RIVET's interactive visualization of the fibered barcode.  

Because our visualization paradigm needs to update the plot of $\B{M^L}$ in real time as we move the line $L$, it must be able to very quickly access $\B{M^L}$ for any choice of $L\in \bar \L$. In this paper, we introduce an efficient data structure $\S(M)$, the \emph{augmented arrangement} of $M$, on which we can perform fast queries to determine $\B{M^L}$, for $L\in \bar \L$. \nomenclature[AAM]{$\S(M)$}{augmented arrangement of $M$}
We present a theorem which guarantees that our query procedure correctly recovers $\B{M^L}$, and describe an efficient algorithm for computing $\S(M)$.

\paragraph{Structure of the Augmented Arrangement}
$\S(M)$ consists of a line arrangement $\cell(M)$ in $[0,\infty)\times \R$ (that is, a cell decomposition of $[0,\infty)\times \R$ induced by a set of intersecting lines), together with a collection $\P^e$ of pairs $(a,b)\in \R^2\times (\R^2\cup\infty)$ stored at each 2-cell $e$ of $\cell(M)$.  
We call $\P^e$ the \emph{barcode template} at $e$. \nomenclature[Te]{$\P^e$}{barcode template at cell $e$}
As we explain in \cref{Sec:Discrete_Barcodes}, $\P^e$ is defined in terms of the barcode of a discrete 1-D persistence module derived from $M$.
\nomenclature[AM]{$\cell(M)$}{line arrangement}
\nomenclature[e]{$e$}{cell in $\cell(M)$}

\paragraph{Queries of $\S(M)$}
We now briefly describe how we query $\S(M)$ for the barcodes $\B{M^L}$.  Further details are given in \cref{Sec:QueryingMath}.

As noted above, a standard point-line duality, described in \cref{sec:arr_def}, provides an identification of $\L$ with $[0,\infty)\times \R$.  For simplicity, let us restrict attention for now to the generic case where $L$ lies in a 2-cell $e$ of $\cell(M)$; the general case is similar and is treated in  \cref{Sec:QueryingMath}.    
To obtain $\B{M^L}$, for each pair $(a,b)\in \P^{e}$, we ``push" the points of each pair $(a,b)\in \P^e$ onto the line $L$, by moving $a$ and $b$ upwards or rightwards in the plane along horizontal and vertical lines; this gives a pair of points $\push_L(a),\push_L(b)\in L$; if $b=\infty$, we take $\push_L(b)=\infty$.  

Our \cref{Thm:QueriesMain}, the central mathematical result underlying the RIVET paradigm, tells us that
\[\B{M^L}=\{\,[\push_L(a),\,\push_L(b)) \mid (a,b)\in \P^{e}\,\};\]
see \cref{fig:Barcode_Templates} for an illustration.  Thus, to obtain the barcode of $\B{M^L}$ it suffices to identify the cell $e$ and then compute $\push_L(a)$ and $\push_L(b)$ for each $(a,b)\in \P^{e}$.

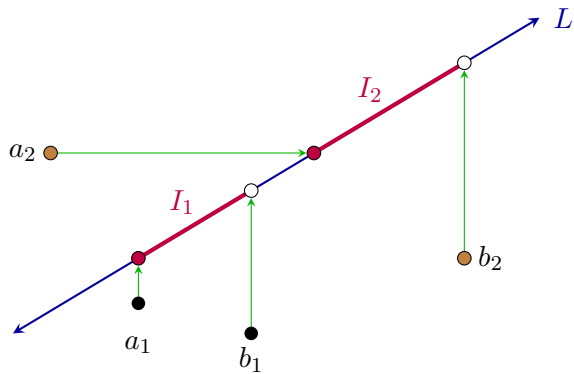
\begin{figure}[ht]
  \begin{center}
    \begin{tikzpicture}
      \draw[<->,thick,color=blue!60!black] (0,0) -- (7,4.2);
       \node[right,color=blue!60!black] at (7.05,4.2) {$L$};   
      
      \draw[ultra thick,color=purple] (1.667,1) -- (3.167,1.9);
      \draw[ultra thick,color=purple] (4,2.4)-- (6,3.6);
      \node[above left,color=purple] at (2.55,1.45) {$I_1$};   
      \node[above left,color=purple] at (5.05,2.95) {$I_2$};   
      
      \coordinate (a1) at (1.667,0.4);
      \coordinate(pa1) at (1.667,1);
      \coordinate (b1) at (3.167,0.0);
      \coordinate (pb1) at (3.167,1.9);

      \draw[->,green!70!black] (a1) -- (1.667,0.9);
       \draw[->,green!70!black] (b1) -- (3.167,1.8);         
      \fill (a1) circle (0.09);
      \fill (b1) circle (0.09);
      \draw[fill=purple] (pa1) circle (0.09);
      \draw[fill=white] (pb1) circle (0.09);
      \node[below] at (1.667,0.1) {$a_1$};
      \node[below] at (3.167,-0.05) {$b_1$};
      
      \coordinate (a2) at (.5,2.4);
      \coordinate(pa2) at (4,2.4);
      \coordinate(b2) at (6,1);
      \coordinate(pb2) at (6,3.6);
      \draw[->,green!70!black] (a2) -- (3.9,2.4);  
      \draw[->,green!70!black] (b2) -- (6,3.5);       
      \draw[fill=brown] (a2) circle (0.09);
       \draw[fill=brown] (b2) circle (0.09);
      \draw[fill=purple] (pa2) circle (0.09);
       \draw[fill=white] (pb2) circle (0.09);
      \node[left] at (.45,2.4) {$a_2$};
      \node[right] at (6.05,1) {$b_2$};
      
    \end{tikzpicture}
  \end{center}
  \caption{An illustration of how we recover the barcode $\protect\B{M^L}$ from the barcode template $\protect\P^e$ by ``pushing" the points of each pair in the barcode template onto $L$.  In this example, $\protect\P^e=\{(a_1,b_1),(a_2,b_2)\}$, and $\protect\B{M^L}$ consists of two disjoint intervals $I_1$, $I_2$.}  
  \label{fig:Barcode_Templates}
\end{figure}

\subsection{Complexity Results for Computing, Storing, and Querying The Augmented Arrangement}\label{Sec:Intro_Complexity}
Once $\S(M)$ has been computed, computing $\B{M^L}$ via a query to $\S(M)$ is far more efficient than computing $\B{M^{L}}$ from scratch; for typical persistence modules arising from data, the query of $\B{M^L}$ can be performed in real time, as desired.
At the same time, the cost of computing and storing $\S(M)$ is reasonable.  The following two theorems provide a theoretical basis for these claims.

For $M$ a 2-D persistence module and $i\in \{0,1,2\}$, let \[\supp \xi_i(M)=\{a\in \R^n\mid \xi_i(M)(a)>0\}.\]
Let $\kappa =\kappa_x \kappa_y$, for $\kappa_x$ and $\kappa_y$ the number of unique $x$ and $y$ coordinates, respectively, of points in $\supp \xi_0(M) \cup \supp \xi_1(M)$; we call $\kappa$ the \emph{coarseness of $M$}.  \nomenclature[ka]{$\kappa$}{coarseness of a 2-D persistence module}

\begin{theorem}[computational cost of querying the augmented arrangement]\label{Thm:Query_Cost}\mbox{}
\begin{enumerate}[(i)]
\item For $L\in \L$ lying in a 2-cell of $\cell(M)$, we can query $\S(M)$ for $\B{M^L}$ in time $O(\log \kappa+|\B{M^L}|)$, where $|\B{M^L}|$ denotes the number of intervals in $\B{M^L}$. 
\item For all other lines $L\in \bar \L$, we can query $\S(M)$ for $\B{M^L}$ in  time $O(\log \kappa+|\B{M^{L'}}|)$, for $L'$ some arbitrarily small perturbation of $L$. 
\end{enumerate}
\end{theorem}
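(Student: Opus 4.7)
The plan is to split the query into two stages: first a point location in the arrangement $\cell(M)$ to identify the cell $e$ whose barcode template applies, and then a per-pair push to assemble the intervals. Correctness in case (i) follows directly from \cref{Thm:QueriesMain}; correctness in case (ii) will be obtained by reducing to a small perturbation that lands in the interior of a 2-cell.

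For part (i), apply the point-line duality of \cref{sec:arr_def} to map $L$ to a point $L^* \in [0,\infty)\times \R$; this is $O(1)$. Assuming $\S(M)$ was constructed together with a standard planar point-location structure on $\cell(M)$ (for example, a Kirkpatrick hierarchy or a trapezoidal decomposition), we can locate the 2-cell $e$ containing $L^*$ in time $O(\log |\cell(M)|)$. A short counting argument bounds $|\cell(M)|$ by a polynomial in $\kappa$: the arrangement is determined by lines dual to pairs of indices drawn from the $\kappa_x$ distinct $x$-coordinates and $\kappa_y$ distinct $y$-coordinates of $\supp \xi_0(M) \cup \supp \xi_1(M)$, so the total number of cells is polynomial in $\kappa$, giving $\log |\cell(M)| = O(\log \kappa)$. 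Having located $e$, we retrieve $\P^e$ from $\S(M)$ and for each pair $(a,b) \in \P^e$ compute $\push_L(a)$ and $\push_L(b)$ in $O(1)$ time, by intersecting $L$ with the appropriate horizontal or vertical line. By \cref{Thm:QueriesMain}, the resulting collection equals $\B{M^L}$ and the number of pairs is $|\P^e| = |\B{M^L}|$. Summing gives the claimed $O(\log \kappa + |\B{M^L}|)$ bound.

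For part (ii), the line $L$ either lies on a 1-cell or 0-cell of $\cell(M)$, or else $L$ is vertical and hence outside the domain of the duality. In the first case we invoke point location with an arbitrary tie-breaking rule, obtaining a 2-cell $e'$ incident to $L^*$; running the push procedure then produces $\B{M^{L'}}$ for any $L' \in \L$ whose dual point lies in the interior of $e'$, and by taking $L'$ close to $L$ we obtain an arbitrarily small perturbation. In the vertical case we locate a 2-cell $e'$ corresponding, under duality, to lines of sufficiently steep slope, and apply the same procedure. In all subcases the cost analysis is identical to part (i), with $|\B{M^L}|$ replaced by $|\B{M^{L'}}|$.

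The main obstacle is the degenerate handling in part (ii): vertical lines (not covered by the duality) and lines that fall exactly on cell boundaries require careful specification of the tie-breaking rule, and we must verify that the chosen incident cell really does correspond to a genuine arbitrarily small perturbation of $L$. A minor but necessary preliminary is the polynomial upper bound on $|\cell(M)|$ in terms of $\kappa$, which underlies the $O(\log \kappa)$ point-location cost; I expect this to reduce to counting the critical lines in the dual arrangement generated by the anchors of $M$.
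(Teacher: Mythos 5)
Your proposal follows the paper's strategy in its essentials: dualize $L$, perform planar point location on $\cell(M)$ in $O(\log\kappa)$ time (using the $O(\kappa^2)$ bound on the number of cells established in \cref{sec:arr_def}), retrieve $\P^e$, and evaluate $\push_L$ on each pair in $O(1)$ time. The accounting $|\P^e|=|\B{M^L}|$ for generic $L$, and $|\P^e|=|\B{M^{L'}}|$ for $L'$ a small perturbation otherwise, is exactly how the paper gets parts (i) and (ii).

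The one place where you flag an obstacle but do not supply a resolution is precisely where the paper must do real work: the vertical-line case. Since $\dual_\ell$ is undefined on vertical lines, a generic planar point-location structure gives you nothing to query; saying ``locate a 2-cell corresponding to lines of sufficiently steep slope'' leaves open how this is done in $O(\log\kappa)$ time. The paper closes this by precomputing an auxiliary one-dimensional array: for each line $J$ in $\cell(M)$ that is topmost among lines of its slope, a pointer to $J$'s rightmost unbounded 1-cell is stored, and the array is sorted by slope; a vertical query line is handled by binary search over this array, giving the $O(\log n_l)=O(\log\kappa)$ bound (see \cref{Sec:Queries}). Likewise, correctness of the chosen coface does not come from ``an arbitrary tie-breaking rule'' --- the specific rule of \cref{Sec:Selecting_A_Coface} is what makes \cref{Thm:QueriesMain} applicable. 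For the non-vertical boundary cases the gap is only expository, but for vertical lines the missing auxiliary data structure is a genuine hole in the complexity argument.

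One smaller imprecision: in part (ii) you write that the push procedure ``produces $\B{M^{L'}}$''. In fact, pushing onto $L$ (not $L'$) from the chosen cell produces $\B{M^L}$ itself, by \cref{Thm:QueriesMain}; the role of $L'$ is purely to bound $|\P^e|$, since some pairs in $\P^e$ may collapse to empty intervals under $\push_L$. This does not affect your time bound, but it conflates correctness (already given by \cref{Thm:QueriesMain}) with the size accounting that the theorem is actually about.
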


\begin{theorem}\label{SimpleAugArrComplexity}\mbox{}
For $\F$ a bifiltration of size $m$ and $M=H_i(\F)$,
\begin{enumerate}[(i)]
\item $\S(M)$ is of size $O(m\kappa^2)$,
\item Our algorithm computes $\S(M)$ from $\F$ using $O(m^3 \kappa+(m+\log \kappa)\kappa^2)$ elementary operations and $O(m^2+m\kappa^2)$ storage.
\end{enumerate}
\end{theorem}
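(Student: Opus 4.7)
My approach is to bound the three ingredients of $\S(M)$ separately: the number of lines in the arrangement, the total combinatorial size of $\cell(M)$, and the maximum size of the barcode templates $\P^e$. For part (ii) I will analyze the algorithm as a sweep over $\cell(M)$, computing one initial template by brute-force matrix reduction and obtaining adjacent templates via local updates.

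\textbf{Part (i).} The line arrangement $\cell(M)$ is induced by lines obtained via point-line duality from points in $\supp\xi_0(M) \cup \supp\xi_1(M)$. Since every point of this support has both its $x$- and $y$-coordinate drawn from the respective coordinate sets counted by $\kappa_x$ and $\kappa_y$, there are at most $\kappa_x\kappa_y = \kappa$ such points, hence at most $\kappa$ lines in the arrangement. By classical arrangement combinatorics, $k$ lines produce an arrangement with $O(k^2)$ vertices, edges, and 2-cells, giving $|\cell(M)| = O(\kappa^2)$. Next, since $M = H_i(\F)$ and $\F$ has size $m$, standard arguments (a free presentation of $M$ is obtained from the chain complex of $\F$) bound the number of generators and relations of $M$ by $O(m)$, so every barcode $\B{M^L}$ has $O(m)$ intervals, and therefore every template $\P^e$ has $O(m)$ pairs. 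Multiplying the $O(\kappa^2)$ cells by the $O(m)$ template size yields $|\S(M)| = O(m\kappa^2)$.

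\textbf{Part (ii), computation.} The algorithm first constructs $\cell(M)$ with a point-location structure; using a sweep this costs $O(\kappa^2 \log \kappa)$, matching the logarithmic term. Next it fills in barcode templates by processing cells in an order following the dual arrangement. I plan to argue that a single full matrix reduction on the $O(m)\times O(m)$ boundary matrix associated with a slice $M^L$ computes one template from scratch in $O(m^3)$ operations, and that crossing a single edge between adjacent 2-cells of $\cell(M)$ corresponds to an elementary reordering of generators whose effect on the reduced matrix can be applied in $O(m)$ time by a vineyard-style transposition update. One then charges the total update cost to the edges of the arrangement: there are $O(\kappa^2)$ such transitions, giving $O(m\kappa^2)$ in total. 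The remaining $O(m^3 \kappa)$ term accounts for the algorithm needing to perform a full reduction once per ``strip'' of the sweep (there are $O(\kappa)$ strips), which is necessary to avoid drift and to re-seed the vineyard after crossing arrangement vertices. Summing the three contributions gives the stated $O(m^3 \kappa + (m+\log\kappa)\kappa^2)$ bound.

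\textbf{Part (ii), storage.} At any moment the algorithm keeps in memory the current $O(m)\times O(m)$ reduced boundary matrix together with its $U$-factor, contributing $O(m^2)$ storage, plus the augmented arrangement built so far, which by (i) requires $O(m\kappa^2)$ storage. Intermediate bookkeeping (the sweep status, point-location structure) is subsumed in these two terms.

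\textbf{Main obstacle.} The delicate part is the amortization in the algorithmic count. Counting cells, lines, and template entries is routine, but to justify the $O(m\kappa^2)$ update term I must show that each edge crossing corresponds to at most one elementary transposition whose effect on the reduced matrix is genuinely $O(m)$, and that barcode templates extracted after the updates agree with the definition of $\P^e$ at the new cell. This rests on a correctness statement linking the vineyard update to the change in the combinatorial data defining $\P^e$ (essentially an adjacency lemma for $\cell(M)$). Establishing this link cleanly, together with verifying that the periodic full reductions controlling the $O(m^3\kappa)$ term suffice to keep the invariants of the sweep, is the step I expect to require the most care.
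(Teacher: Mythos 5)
Your part (i) is essentially correct and matches the paper's approach: bound the number of lines by $\kappa$ (though note the lines in $\cell(M)$ are dual to \emph{anchors}---least upper bounds of weakly incomparable pairs in $\XiSp$---not to points of $\XiSp$ themselves; the $\kappa$ bound still holds since anchors live on the same coordinate grid), deduce $O(\kappa^2)$ cells by standard arrangement combinatorics, and bound each $\P^e$ by $O(m)$ via the \fir of $M^e$.

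In part (ii), however, there is a genuine gap. You assert that ``crossing a single edge between adjacent 2-cells of $\cell(M)$ corresponds to an elementary reordering of generators whose effect on the reduced matrix can be applied in $O(m)$ time by a vineyard-style transposition update,'' and then charge $O(m)$ per edge over $O(\kappa^2)$ edges. This premise is false: a single edge crossing can require $\Omega(m^2)$ adjacent transpositions in the worst case, since many pairs $\gr_j(k),\gr_j(k')$ may switch or separate simultaneously at the anchor line being crossed. Consequently a per-edge charge cannot give the stated bound, and your subsequent invention of ``$O(\kappa)$ strips'' each requiring a fresh $O(m^3)$ reduction is not part of the algorithm and has no basis in the construction (the standard persistence algorithm is run once, at the initial cell $e_0$; thereafter only vineyard updates are used in the basic algorithm).

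The actual proof requires a global amortization: one shows (\cref{Prop:TotalTranspositions}) that the total number of insertion-sort transpositions over the \emph{entire} traversal of $\pth$ is $O(m^2\kappa)$. The key geometric ingredient is \cref{Lem:Switches_And_Separations}: for any fixed pair $r,s \in \sbx$, there is at most one anchor line at which $r,s$ switch, and at most two at which they separate. Combined with the fact that the path $\pth$ (built from a spanning tree) crosses each anchor line at most $O(\kappa)$ times, this gives a per-pair bound of $O(\kappa)$ transpositions, hence $O(m^2\kappa)$ total. Each transposition triggers an $O(m)$ vineyard update, yielding the $O(m^3\kappa)$ term. This amortization over pairs and anchor lines---rather than over edge crossings---is exactly what your proposal lacks, and it is the nontrivial step that makes the theorem go through.
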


We prove \cref{Thm:Query_Cost} in \cref{Sec:Queries} and \cref{SimpleAugArrComplexity} in \cref{Sec:Complexity_Results}.  

To keep our exposition brief in this introduction, we have assumed in the statement of \cref{SimpleAugArrComplexity} that $M$ is a persistent homology module of a bifiltration.  However, our algorithm for computing augmented arrangements does handle purely algebraic input; see \cref{Sec:FI_Reps}.  We give more general complexity bounds in the algebraic setting in \cref{Sec:Complexity_Results}.  

\paragraph{Coarsening and the Interpretation of \cref{SimpleAugArrComplexity}}
For a fixed choice of $L$, $\B{M^L}$ is of size $O(m)$, and the time required to compute $\B{M^L}$ via an application of the standard persistence algorithm is $O(m^3)$.  Thus  \cref{SimpleAugArrComplexity} indicates that, as one would expect, computation and storage of $\S(M)$ is more expensive than computation and storage of $\B{M^L}$ for some fixed $L$.  

For $m$ as above, $|\kappa|$ is $O(m^2)$ in the worst case.  Thus, in the worst case, our bounds on the size and time to compute $\S(M)$ grow like $m^5$, which on the surface may appear problematic for practical applications.  However, as we explain in \cref{Sec:Coarsening}, we can always employ a simple coarsening procedure to approximate $M$ by a module $M'$ for which $\kappa$ is a small constant (say, $\kappa\leq 400$).  $\S(M')$ encodes $\B{M'}$ exactly, and so in view of Landi's external stability result \cite{landi2014rank}, $\S(M)$ encodes $\B{M}$ approximately.  More details on coarsening are given in \cref{Sec:Coarsening}.

\subsection{Computation of the Augmented Arrangement}\label{Sec:Intro_Computation}
Our algorithm for computing $\S(M)$ decouples into three main parts:
\begin{enumerate}
\item Computing $\xi_0(M)$ and $\xi_1(M)$,
\item Constructing the line arrangement $\cell(M)$,  
\item Computing the barcode template $\P^e$ at each 2-cell $e$ of $\cell(M)$.  
\end{enumerate}
We next say a few words about each of these.

\paragraph{Computing Bigraded Betti Numbers}
As noted above, one of the main mathematical contributions underlying RIVET is a fast algorithm for computing the bigraded Betti numbers of a 2-D persistence module $M$.  Not only does RIVET provide a visualization of the Betti numbers, but it also makes essential use of the Betti numbers in constructing $\S(M)$.

For $M$ a persistent homology module of a bifiltration with $n$ simplices, our algorithm for computes the bigraded Betti numbers of $M$ in $O(m^3)$ time;  see \cite{lesnick2014betti} and \cref{sec:ComputingBettiNumbers}.    

In \cref{Sec:Runtimes} of this paper, we present preliminary experimental results on the performance of our algorithm for computing bigraded Betti numbers.  These indicate that the cost of the algorithm is very reasonable in practice. 

\paragraph{Computing the Line Arrangement}
The second phase of computation constructs the line arrangement $\cell(M)$ underlying $\S(M)$.  Line arrangements have been the object of intense study by computational geometers for decades, and there is well-developed machinery for constructing and working with line arrangements in practice \cite{deberg2008computation}.  Our algorithms for constructing and querying $\S(M)$ leverage this machinery; see \cref{Sec:Representing_And_Querying_Aug_Arrangement} and \cref{Sec:Computing_Line_Arrangement}.
 
\paragraph{Computing the Barcode Templates}
The third phase of our computation of $\S(M)$ computes the barcode templates $\P^e$ stored at each 2-cell $e\in \cell(M)$.  
As noted above, each $\P^e$ is defined in terms of the barcode $\B{M^e}$ of a certain 1-D persistence module $M^e$.  To compute each $\P^e$ for each 2-cell $e$, we need to compute each $\B{M^e}$.  This is the most expensive part of the computation of $\S(M)$, both in theory and in practice. In \cref{Sec:Computing_Additional_Data_at_Faces}, we introduce our core algorithm for this, based on the vineyard algorithm for updating persistent homology computations \cite{cohen2006vines}.  In \cref{Sec:Barcodes_Scratch}, we present a modification of this algorithm which is much faster in practice. 

In fact, as explained in \cref{Sec:Parallel_Comp}, our algorithm for computing barcode templates is embarrassingly parallelizable.  

\paragraph{Computational Experiments}
\cref{Sec:Runtimes} presents preliminary results on the performance of our algorithm for computing augmented arrangements.  As explained there, our present implementation of RIVET is not yet fully optimized, and our timing results should be regarded as loose upper bounds on what can be achieved using the algorithms of this paper.

Still, the results demonstrate that even with our current code, computation of an augmented arrangement of a bifiltration containing millions of simplices is feasible on a standard personal computer, provided we employ some modest coarsening. 
Thus, the current implementation already performs well enough to be used in the analysis of modestly sized real world data sets.  
  With more implementation work, including the introduction of parallelization, we expect RIVET to scale well enough to be useful in many of the same settings where 1-D persistence is currently used for exploratory data analysis.

\subsection{Outline}
We conclude this introduction with an outline of the remainder of the paper.

 \cref{Sec:Preliminaries} reviews basic algebraic facts about persistence modules, their (minimal) presentations, and graded Betti numbers.  We also discuss the connection between $\R^n$-indexed persistence modules and their $\Z^n$-indexed discretizations.  
\cref{Sec:Arrangements} defines the augmented arrangement $\S(M)$ of a 2-D persistence module $M$.  \cref{Sec:Selecting_A_Coface,Sec:Query_Theorem} give our main result on querying $\S(M)$ for the barcodes $\B{M^L}$; this is \cref{Thm:QueriesMain}.
In \cref{Sec:Representing_And_Querying_Aug_Arrangement}, we describe how $\S(M)$ is stored in memory, and apply \cref{Thm:QueriesMain} to give an algorithm for querying $\S(M)$.

The remaining sections introduce our algorithm for computing $\S(M)$: First, \cref{Sec:Computing_Line_Arrangement}  specifies how persistence modules are represented as input to our algorithm, and explains our algorithm for computing $\cell(M)$.  \cref{Sec:Computing_Additional_Data_at_Faces} explains our core algorithm for computing the barcode templates $\P^e$; this completes the specification of our algorithm for computing $\S(M)$, in its basic form.  \cref{Sec:Complexity_Results} analyzes the time and space complexity of our algorithm for computing $\S(M)$.  \cref{Sec:Speedups} describes several practical strategies to speed up the computation of $\S(M)$.
\cref{Sec:Runtimes} presents our preliminary timing results for the computation of $\S(M)$.

\cref{Sec:Interface_Details} expands on the introduction to the RIVET interface given in \cref{Sec:Intro_Visualization}, providing additional details.


\section{Algebra Preliminaries}\label{Sec:Preliminaries}
In this section, we present the basic algebraic definitions and facts we will need to define and study augmented arrangements of 2-D persistence modules.

\subsection{A Module-Theoretic Description of Multi-D Persistence Modules}\label{Sec:MultidimensionalPersistenceModules}
In \cref{Sec:Multi_D_Persistent_Homology} we defined a $n$-D persistence module to be an object of the functor category $\nmod$.  Here we give a module-theoretic description of $\nmod$.

\paragraph{The Ring $P_n$}\label{MonoidRings}
Let the ring $P_n$ be the analogue of the usual polynomial ring $K[x_1,\ldots,x_n]$ in $n$ variables, where exponents of the indeterminates in $P_n$ are allowed to take on arbitrary values in $[0,\infty)$ rather than only values in $\Nbb$.  For example, if $K=\R$, then $1+x_2+x_1^\pi+\frac{2}{5}x_1^3x_2^{\sqrt 2}$ is an element of $P_2$.

More formally, $P_n$ can be defined as a {\it monoid ring} over the monoid $([0,\infty)^n,+)$ \cite{lang2002algebra}.

For $a=(a_1,\ldots,a_n)\in [0,\infty)^n$, we let $x^a$ denote the monomial $x_1^{a_1}x_2^{a_2}\cdots x_n^{a_n}\in P_n$. 
Let $I\subset P_n$ be the ideal generated by the set $\{x_i^\alpha\mid \alpha>0, 1\leq i\leq n\}$.  

\paragraph{$\R^n$-graded $P_n$-Modules}
Since the field $K$ is a subring of $P_n$, any $P_n$-module comes naturally equipped with the structure of a $K$-vector space.  

We define an \emph{$\R^n$-graded} $P_n$-module to be a $P_n$-module $M$ with a direct sum decomposition as a $K$-vector space $M \simeq \bigoplus_{a \in {\R}^n} M_a$ such that the action of $P_n$ on $M$ satisfies $x^{b}(M_a) \subset M_{a+b}$ for all $a\in \R^n,$ $b\in [0,\infty)^n$.

The $\R^n$-graded $P_n$-modules form a category whose morphisms are the module homomorphisms $f:M\to N$ such that $f(M_a)\subset N_a$ for all $a \in \R^n$.  There is an obvious isomorphism between this $\nmod$ and this category, so that we may identify the two categories.  Henceforth, we will refer to $\R^n$-graded $P_n$-modules as ($n$-D) persistence modules, or $n$-modules for short.

\begin{remark}
As a rule, the familiar definitions and constructions for modules make sense in the category $\nmod$.  For example, as the reader may check, we can define submodules, quotients, direct sums, tensor products, projective/injective resolutions, and Tor functors in $\nmod$.  As we next explain, we also can define free $n$-modules and presentations of $n$-modules. 
\end{remark}

\subsection{Free \texorpdfstring{$n$}{n}-Modules and Presentations}\label{Sec:Free_Modules_Presentations}
 
\paragraph{$n$-graded Sets}
Define an {\it $n$-graded set} to be a pair $\W=(W,\gr_\W)$ for some set $W$ and function $\gr_\W:W\to\R^n$.  Formally, we may regard $\W$ as the set of pairs \[\{(w,\gr_\W(w))\mid w\in W\},\] and we'll sometimes make use of this representation.  
 We'll often abuse notation and write $\W$ to mean the set $W$.  Also, when $\W$ is clear from context we'll abbreviate $\gr_\W$ as $\gr$.
\nomenclature[gr]{$\gr_\W$}{grade function of the $n$-graded set $\W$}
        
We say a subset $\Y$ of an $n$-module $M$ is \emph{homogeneous} if \[\Y\subseteq \bigcup_{a\in \R^n} M_a.\]  Clearly, we may regard $\Y$ as an $n$-graded set. 

\paragraph{Shifts of Modules}
For $M$ an $n$-module and $v\in \R^n$, we define $M(v)$ to be the $n$-module such that for $a\in \R^n$, $M(v)_a=M_{a+v}$, and for $a\leq b\in \R^n$, $M(v)(a,b)=M(a+v,b+v)$.

For example, when $n=2$, $M(1,1)$ is obtained from $M$ by shifting all vector spaces of $M$ down by one and to the left by one.  

\paragraph{Free $n$-Modules}
The usual notion of a free module extends to the setting of $n$-modules as follows: For $\W$ an $n$-graded set, let 
$\free[\W]=\bigoplus_{w\in \W} P_n(-\gr(w))$.  We identify $\W$ with a set of generators in $\free[\W]$, in the obvious way.  A {\it free $n$-module} $F$ is an $n$-module such that $F\simeq \free[\W]$ for some $n$-graded set $\W$.  Equivalently, we can define a free $n$-module as an $n$-module which satisfies a certain universal property; see \cite{carlsson2009theory}.

For $\Y$ a homogeneous subset of a free $n$-module $F$, let $\langle \Y \rangle$ denote the submodule of $F$ generated by $\Y$.

\paragraph{Matrix Representations of Morphisms of Free Modules}
Let $\W,\W'$ be finite $n$-graded sets with ordered underlying sets $W=\{w_1,\ldots,w_l\}$, $W'=\{w'_1,\ldots,w'_m\}$.  For a morphism $f:\free[\W]\to \free[\W']$, we can represent $f$ by a $|W|\times |W'|$ matrix $[f]$ with coefficients in $K$:  If $\gr(w'_i)\leq \gr(w_j)$, we define $[f]_{ij}$ to be the unique solution to \[ [f]_{ij}\, w'_i =x^{\gr(w'_i)-\gr(w_j)}\rho_i \circ f(w_j),\] where $\rho_i: \free[\W]\to \free[\{w_i\}]$ is the projection. If $\gr(w'_i)\not\leq \gr(w_j)$, we define $[f]_{ij}=0$.

\paragraph{Presentations of $n$-Modules}
A {\it presentation} of an $n$-module $M$ is a pair $(\W,\Y)$, where $\W$ is an $n$-graded set and $\Y \subseteq \free[\W]$ is a homogeneous, with $M\simeq \free[\W]/\langle \Y \rangle$.  We denote the presentation $(\W,\Y)$ as $\langle \W|\Y \rangle$.  If there exists a presentation $\langle \W|\Y \rangle$ for $M$ with $\W$ and $\Y$ finite, then we say $M$ is {\it finitely presented}.

Note that the inclusion $\Y\hookrightarrow \free[\W]$ induces a morphism $\free[\Y]\to \free[\W]$; we denote this morphism as $\Phi_{\langle \W|\Y\rangle}$.

\begin{example}\label{Ex:Rank_Invariant_Not_Complete}
Consider the 2-D persistence modules
\begin{align*}
M&\simeq\langle(a,(1,0)), (b,(0,1)), (c,(1,1))\mid x_2a-x_1b\rangle \\
N&\simeq\langle(a,(1,0)), (b,(0,1))\mid \emptyset \rangle.
\end{align*}
The induced linear maps $M_{(1,0)}\oplus M_{(0,1)}\to M_{(1,1)}$ and $N_{(1,0)}\oplus N_{(0,1)}\to N_{(1,1)}$ do not have equal ranks.  Hence $M$ and $N$ are not isomorphic.  However, $\rk(M)=\rk(N)$.  This shows that the rank invariant does not completely determine the isomorphism type of a persistence module.   
\end{example}

\subsection{Minimal Presentations of \texorpdfstring{$n$}{n}-Modules}\label{Sec:MinPresentations}
Let $M$ be an $n$-module.  We say a presentation $\langle \W|\Y \rangle$ of $M$ is \emph{minimal} if $\langle\Y\rangle \subseteq I\free[\W]$ and $\ker \Phi_{\langle \W|\Y\rangle}\subseteq I\free[\Y].$

The following proposition is a variant of \cite[Lemma 19.4]{eisenbud1995commutative}, and is proved in the same way.  It makes clear that minimal presentations are indeed minimal, in a reasonable sense.  

\begin{proposition}\label{MinPresentations}
A finite presentation $\langle \W|\Y\rangle$ is minimal if and only if $\W$ descends to a minimal set of generators for $\coker \Phi_{\langle \W|\Y\rangle}$ and $\Y$ is a minimal set of generators for $\langle \Y\rangle$.
\end{proposition}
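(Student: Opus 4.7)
The plan is to split the biconditional into two independent equivalences, one for each clause in the definition of minimality, and then to deduce both from a single graded Nakayama lemma. Specifically, writing $\Phi = \Phi_{\langle \W|\Y\rangle}$, I would prove (a) $\langle \Y \rangle \subseteq I\free[\W]$ iff $\W$ descends to a minimal generating set of $M := \coker \Phi$, and (b) $\ker \Phi \subseteq I\free[\Y]$ iff $\Y$ is a minimal generating set of $\langle \Y \rangle$. The conjunction of (a) and (b) is exactly the proposition.

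The common engine for (a) and (b) is the following characterization: for a finitely generated $n$-module $N$ and a homogeneous subset $S \subseteq N$, $S$ is a minimal generating set of $N$ if and only if the image of $S$ in the $\R^n$-graded $K$-vector space $N/IN$ is a $K$-basis. This reduces to a graded Nakayama lemma: if $N$ is finitely generated and $IN = N$, then $N = 0$. I would prove Nakayama directly by a minimal-support argument. After discarding any generators that are zero in $N$, the support $\{a : N_a \neq 0\}$ is contained in the finite union $\bigcup_i (\gr(g_i) + [0,\infty)^n)$, and each $\gr(g_i)$ itself lies in the support. Hence the minimal elements of the finite set $\{\gr(g_i)\}$ are minimal elements of the support. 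For any such minimal $a$, $(IN)_a = \sum_{b \in [0,\infty)^n \setminus \{0\}} x^b N_{a-b}$, and each $N_{a-b}$ vanishes by minimality of $a$, so $(IN)_a = 0 \neq N_a$, contradicting $IN = N$. Given Nakayama, standard manipulations show that $S$ generates $N$ iff its image generates $N/IN$ over $K$, and minimality of a homogeneous generating set of an $\R^n$-graded $K$-vector space amounts to $K$-linear independence, yielding the characterization.

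To finish, I would apply this characterization twice. For (a), note that $\W$ is a $K$-basis of $\free[\W]/I\free[\W]$, so its image in $M/IM = \free[\W]/(\langle \Y \rangle + I\free[\W])$ is a $K$-basis iff $\langle \Y \rangle \subseteq I\free[\W]$; combined with the tautology that $\W$ generates $M$, this gives (a). For (b), $\Phi$ factors as $\free[\Y] \twoheadrightarrow \langle\Y\rangle$, so $\langle\Y\rangle/I\langle\Y\rangle \simeq \free[\Y]/(\ker \Phi + I\free[\Y])$; since $\Y$ is a $K$-basis of $\free[\Y]/I\free[\Y]$, its image in $\langle\Y\rangle/I\langle\Y\rangle$ is a $K$-basis iff $\ker \Phi \subseteq I\free[\Y]$, giving (b).

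The main obstacle is the Nakayama step itself. The usual $\Z^n$-graded proof relies on well-foundedness of the grade poset, which fails in the $\R^n$-graded setting of $P_n$ (one has infinite strictly decreasing chains in $\R^n$). The substitute I outlined exploits finite generation to confine the support within finitely many translates of the positive cone, so that the finitely many generator grades contain a minimal support element; this is enough to run the rest of the argument. Once this replacement is in place, the remainder is a direct transcription of the classical Nakayama-based proof, essentially as in \cite{eisenbud1995commutative}.
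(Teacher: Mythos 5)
Your proof is correct and takes essentially the same route the paper intends, namely the Nakayama-based argument of Eisenbud's Lemma 19.4: splitting minimality into the two equivalences (a) and (b), identifying $M/IM\simeq\free[\W]/(\langle\Y\rangle+I\free[\W])$ and $\langle\Y\rangle/I\langle\Y\rangle\simeq\free[\Y]/(\ker\Phi+I\free[\Y])$, and reading off each inclusion as the injectivity of a surjection of graded $K$-vector spaces with a chosen basis. One small correction to your framing of ``the main obstacle'': well-foundedness already fails for the $\Z^n$-grade poset, and the standard multigraded Nakayama proof never invokes it --- it relies, exactly as you do, on the fact that the finitely many generator grades contain a minimal element, which is then automatically minimal in the support; so your minimal-support argument is the usual one transcribed to $P_n$, not a new substitute for it.
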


\begin{remark}
It follows immediately from \cref{MinPresentations} that every finitely presented $n$-D persistence module has a minimal presentation.  
\end{remark}

\subsection{Graded Betti Numbers of Persistence Modules}\label{Sec:Betti_Numbers}
For $M$ an $n$-module, define $\dimf(M):\R^n\to \Nbb$, the \emph{dimension function of $M$}, by 
$\dimf(M)(a)=\dim(M_a)$.

For $i\geq 0$, define \[\xi_i(M)=\dimf(\Tor_i(M,P_n/I P_n)).\]  The functions $\xi_i(M)$ are called the \emph{(graded) Betti numbers} of $M$.    Betti numbers of multigraded $K[x_1,\cdots,x_n]$-modules are defined analogously; these are discussed in many places, e.g., in \cite{carlsson2009theory}.  In our study of augmented arrangements and fibered barcodes, we will only need to consider $\xi_0(M)$ and $\xi_1(M)$.
  
We omit the straightforward proof the following result:
\begin{proposition}\label{MinPresAndBetti}
If $\langle \W|\Y\rangle$ is a minimal presentation for $M$, then for all $a\in \R^n$, \[\xi_0(M)(a)=|\gr_\W^{-1}(a)|,\qquad\xi_1(M)(a)=|\gr_\Y^{-1}(a)|.\]
\end{proposition}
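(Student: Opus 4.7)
The plan is to compute $\Tor_0(M, P_n/IP_n)$ and $\Tor_1(M, P_n/IP_n)$ directly from the minimal presentation, using that the minimality conditions are exactly what force the relevant connecting and induced maps to vanish. First I would unpack $P_n/IP_n$ itself: since $I$ is generated by all monomials of strictly positive grade, $P_n/IP_n$ is the $\R^n$-graded module with $K$ concentrated at grade $0$ and $0$ elsewhere. Consequently, for any free $n$-module $F = \free[\V] = \bigoplus_{v\in\V}P_n(-\gr(v))$, the tensor product $F\otimes_{P_n}P_n/IP_n \simeq F/IF$ has $(F/IF)_a$ of dimension $|\gr_\V^{-1}(a)|$. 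This immediately reduces both claims to identifying the two Tor groups with $F_0/IF_0$ and $F_1/IF_1$, where $F_0 = \free[\W]$, $F_1 = \free[\Y]$, and $\phi = \Phi_{\langle\W|\Y\rangle}\colon F_1\to F_0$.

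For $\xi_0$, I would apply $-\otimes_{P_n} P_n/IP_n$ to the right-exact sequence $F_1\xrightarrow{\phi}F_0\to M\to 0$ to obtain $F_0/IF_0 \twoheadrightarrow M/IM$ with kernel $(\im\phi+IF_0)/IF_0$. The first minimality condition $\langle\Y\rangle = \im\phi\subseteq IF_0$ makes this kernel zero, yielding the isomorphism $M/IM\simeq F_0/IF_0$, which gives $\xi_0(M)(a)=|\gr_\W^{-1}(a)|$.

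For $\xi_1$, I would split the presentation into the short exact sequence $0\to \im\phi\to F_0\to M\to 0$ and apply the long exact sequence of $\Tor(-,P_n/IP_n)$. Since $F_0$ is free (hence flat), $\Tor_1(F_0,P_n/IP_n)=0$, so $\Tor_1(M,P_n/IP_n)$ is the kernel of $\im\phi/I\im\phi\to F_0/IF_0$; but by the first minimality condition again that map is zero, so $\Tor_1(M,P_n/IP_n)\simeq \im\phi/I\im\phi$. The surjection $F_1\twoheadrightarrow \im\phi$ induces a surjection $F_1/IF_1\twoheadrightarrow \im\phi/I\im\phi$ whose kernel is $(\ker\phi+IF_1)/IF_1$; the second minimality condition $\ker\phi\subseteq IF_1$ annihilates this kernel, giving $F_1/IF_1\simeq \Tor_1(M,P_n/IP_n)$, whence $\xi_1(M)(a)=|\gr_\Y^{-1}(a)|$.

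There is no substantive obstacle, only the small bookkeeping point of checking that the standard homological machinery (long exact sequence of $\Tor$, flatness of free modules, behavior under tensoring with $P_n/IP_n$) transfers verbatim from the familiar $\Z^n$-graded polynomial setting to the $\R^n$-graded monoid-ring setting. Since $\kvect^{\RCat^n}$ is an abelian category equivalent to the category of $\R^n$-graded $P_n$-modules, the usual arguments apply without modification.
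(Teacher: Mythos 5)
The paper omits its proof of this proposition (``We omit the straightforward proof\ldots''), so there is no paper argument to compare against. Your proof is correct: it is the standard computation of $\Tor_0$ and $\Tor_1$ from a minimal presentation, using that $P_n/IP_n$ is $K$ in grade $0$, that $\langle\Y\rangle\subseteq I\free[\W]$ kills the image of $\im\phi/I\im\phi\to F_0/IF_0$, and that $\ker\phi\subseteq I\free[\Y]$ kills the kernel of $F_1/IF_1\twoheadrightarrow\im\phi/I\im\phi$. One tiny terminological point: when you invoke ``flatness'' of $F_0$ to get $\Tor_1(F_0, P_n/IP_n)=0$, what you are really using is that free $\R^n$-graded $P_n$-modules are projective in the graded category; this is immediate and the conclusion is the same, but the word ``flat'' is slightly off in a graded context.
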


\begin{example}
The presentations of the modules $M$ and $N$ given in \cref{Ex:Rank_Invariant_Not_Complete} are minimal.  Using this, it's easy to see that
\begin{align*}
	\xi_0(M)(a) &=
	\begin{cases}
		1 &\textup{ if } a\in \{(1,0),(0,1),(1,1)\}\\
		0 &\textup{ otherwise},
	\end{cases} &
	\xi_1(M)(a) &= 
	\begin{cases}
		1 &\textup{ if } a\in \{(1,1)\}\\
		0 &\textup{ otherwise},
	\end{cases} \\
	\xi_0(N)(a) &= 
	\begin{cases}
		1 &\textup{ if } a\in \{(1,0),(0,1)\}\\
		0 &\textup{ otherwise}.
	\end{cases} &
	\xi_1(N) &= \xi_2(N)=\xi_2(M)=0.
\end{align*}
\end{example}

\begin{example}
For $M\simeq\langle(a,(0,0))\mid x_1a,x_2a\rangle$,
\[
\xi_0(M)(a)=
\begin{cases}
1 &\textup{ if } a\in \{(0,0)\}\\
0 &\textup{ otherwise},
\end{cases}
\qquad
\xi_1(M)(a)= 
\begin{cases}
1 &\textup{ if } a\in \{(1,0),(0,1)\}\\
0 &\textup{ otherwise},
\end{cases}
\]
\[
\xi_2(M)(a)= 
\begin{cases}
1 &\textup{ if } a\in \{(1,1)\}\\
0 &\textup{ otherwise}.
\end{cases}
\]
\end{example}

\paragraph{Grades of Influence}
For $M$ a persistence module and $a\in \R^n$ let \[\I_M(a)=\{y\leq a \mid \xi_0(y)>0\textup{ or }\xi_1(y)>0\}.\]

\begin{lemma}\label{Lem:GradesOfInfluenceAndIsomorphisms}
For $M$ finitely presented and $a,b\in \R^n$ with $\I_M(a)=\I_M(b)$, $M(a,b)$ is an isomorphism.  
\end{lemma}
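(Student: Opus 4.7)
The plan is to reduce to a statement about free modules by using a minimal presentation, and then invoke naturality of the cokernel. Throughout the argument, I assume $a \leq b$, since this is implicit in the notation $M(a,b)$ as defined in the Notation paragraph.

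First, since $M$ is finitely presented, I would fix a minimal presentation $\langle \W \mid \Y \rangle$ of $M$, which exists by \cref{MinPresentations}. By \cref{MinPresAndBetti}, we have
\[
\I_M(c) = \{y \leq c : y \in \gr_\W(\W) \cup \gr_\Y(\Y)\}
\]
for every $c \in \R^n$. In particular, the hypothesis $\I_M(a) = \I_M(b)$ (together with $a \leq b$) implies
\[
\{w \in \W : \gr_\W(w) \leq a\} = \{w \in \W : \gr_\W(w) \leq b\},
\]
and the analogous equality with $\W$ replaced by $\Y$, because any $w \in \W$ with $\gr_\W(w) \leq b$ that is not $\leq a$ would produce an element of $\I_M(b) \setminus \I_M(a)$.

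Second, I would observe that for any $n$-graded set $\U$ and any $c \in \R^n$,
\[
\free[\U]_c \;=\; \bigoplus_{u \in \U,\; \gr_\U(u) \leq c} K \cdot x^{c - \gr_\U(u)}\, u,
\]
and the transition map $\free[\U](a,b)$ sends the basis vector at index $a$ corresponding to $u$ to the basis vector at index $b$ corresponding to the same $u$. Combined with the first step, the sets indexing bases of $\free[\W]_a$ and $\free[\W]_b$ coincide, hence $\free[\W](a,b) : \free[\W]_a \to \free[\W]_b$ is an isomorphism; the same holds for $\free[\Y]$.

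Third, naturality of $\Phi_{\langle \W \mid \Y \rangle}$ gives the commutative diagram
\[
\begin{array}{ccc}
\free[\Y]_a & \longrightarrow & \free[\W]_a \\
\bigg\downarrow & & \bigg\downarrow \\
\free[\Y]_b & \longrightarrow & \free[\W]_b
\end{array}
\]
in which the vertical maps are the isomorphisms just established. Taking cokernels of the two horizontal maps yields $M_a$ and $M_b$ respectively, and the induced map on cokernels is $M(a,b)$. A standard diagram chase (or the cokernel functor applied to an isomorphism of chain complexes of length two) shows that $M(a,b)$ is an isomorphism, completing the proof.

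The main obstacle is essentially notational: one just needs to carefully use \cref{MinPresAndBetti} to translate the hypothesis $\I_M(a) = \I_M(b)$ into the combinatorial equality of grade sets for $\W$ and $\Y$ below $a$ and $b$. Once that translation is made, every other step is either the explicit vector-space description of free modules or the elementary fact that the cokernel is functorial.
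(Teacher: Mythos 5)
Your proof is correct and takes essentially the same approach as the paper's: both fix a minimal presentation $\langle \W \mid \Y \rangle$, use \cref{MinPresAndBetti} to translate the hypothesis $\I_M(a)=\I_M(b)$ into the statement that the sets of generators (resp.\ relations) with grade below $a$ and below $b$ coincide, and then conclude that the transition map on the quotient is an isomorphism. The only difference is packaging: the paper shows directly that $\free[\W](a,b)$ restricts to an isomorphism $\langle \Y\rangle_a \to \langle \Y\rangle_b$ and hence induces an isomorphism on the quotient, whereas you instead observe that both $\free[\W](a,b)$ and $\free[\Y](a,b)$ are isomorphisms and then take cokernels across the commutative square built from $\Phi_{\langle \W\mid\Y\rangle}$; these are two presentations of the same argument, and your version makes explicit the ``easy to see'' step that the paper elides.
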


\begin{proof}
Let $\langle \W | \Y\rangle $ be a minimal presentation for $M$.  Let $M'$ be the $n$-module with $M'_a=\free[\W]_a/\langle \Y\rangle_a$, and $M'(a,a')$ the map induced by the inclusion $\free[\W](a,a')$.  Clearly, $M'$ isomorphic to $M$.  Using \cref{MinPresAndBetti}, it's easy to see that for $a,b\in \R^n$ with $\I_M(a)=\I_M(b)$, the map $\free[\W](a,b)$ is an isomorphism sending $\langle \Y\rangle_a$ isomorphically to $\langle \Y\rangle_b$.  Hence $M'(a,b)$ is an isomorphism.  Since $M$ and $M'$ are isomorphic, $M(a,b)$ is an isomorphism as well.  
\end{proof}

\subsection{Continuous Extensions of Discrete Persistence Modules}\label{Sec:Continuous_Extensions}
In the computational setting, the persistence modules we encounter are always finitely presented.  It turns out that finitely presented persistence modules are, in a sense, \emph{essentially discrete}; we now explain this.  

A \emph{discrete (or $\Z^n$-indexed) persistence module} is a functor $\ZCat^n\to \kvect$, where $\ZCat^n$ is the poset category of $\Z^n$.\nomenclature[Z]{$\ZCat^n$}{Poset category of $\Z^n$}  
Let $\An=K[x_1,\ldots,x_n]$ denote the ordinary polynomial ring in $n$ variables.  In analogy with the $\R^n$-indexed case, we can regard a discrete persistence module as a multi-graded $\An$-module, in the obvious way.

All of the basic definitions and machinery we've described above for $\R^n$-indexed persistence modules can be defined for discrete persistence modules in essentially the same way.  In particular, we may define the Betti numbers $\xi_i(\DiscreteMod):\Z^n\to \Nbb$ of a discrete persistence module $\DiscreteMod$.

\paragraph{Grid Functions}
\nomenclature[G]{$\G$}{$n$-D grid function}
For $n\geq 1$, we define an \emph{n-D grid} to be a function $\G:\Z^n\to \R^n$, given by \[\G(z_1,\dots,z_n)=(\G_1(z_1),\ldots,\G_n(z_n))\] for some non-decreasing functions $\G_i:\Z\to \R$ with \[\lim_{z\to -\infty} \G_i(z)=-\infty\qquad\textup{and}\qquad  \lim_{z\to \infty}\G_i(z)=\infty\] for each $i$.

\nomenclature[fl]{$\fl_\G$}{floor function for $\im \G$}
We define $\fl_\G(a)$ to be the maximum element of $\im(\G)$ ordered before $a$ in the partial order on $\R^n$.  That is, for $\G$ a 1-D grid, we let $\fl_\G:\R\to \im(\G)$ be given by \[\fl_\G(t)=\max \{s\in \im(\G)\mid s\leq t\},\] and for $\G$ an $n$-D grid function, we define $\fl_\G:\R^n\to \im(\G)$ by 
\[\fl_\G(a_1,\ldots,a_n)=(\fl_{\G_1}(a_1),\ldots,\fl_{\G_n}(a_n)).\]

\paragraph{Continuous Extensions of Discrete Persistence Modules}
\nomenclature[EG]{$\CoEx_{\G}$}{continuous extension functor}
For $\G$ an $n$-D grid, we define a functor $\CoEx_\G:\kVect^{\ZCat^n}\to \kVect^{\RCat^n}$ as follows:
\begin{enumerate}
\item For $\DiscreteMod$ a $\Z^n$-indexed persistence module and $a,b\in \R^n$, \[\CoEx_\G(\DiscreteMod)_a=\DiscreteMod_y,\qquad  \CoEx_\G(\DiscreteMod)(a,b)=\DiscreteMod(y,z),\] where 
\begin{align*}
y&=\max\, \{w\in \Z^n\mid \G(w)=\fl_\G(a)\},\\
z&=\max\, \{w\in \Z^n\mid \G(w)=\fl_\G(b)\}.
\end{align*}
\item The action of $\CoEx_\G$ on morphisms is the obvious one.
\end{enumerate}

We say that an $n$-module $M$ is a \emph{continuous extension of $\DiscreteMod$ along $\G$} if $M\cong\CoEx_\G(\DiscreteMod)$.

\begin{proposition}\label{Prop:Finitely_Presented_Implies_Discrete}
Any finitely presented $n$-module $M$ is a continuous extension of a finitely generated discrete persistence module along some $n$-D grid.
\end{proposition}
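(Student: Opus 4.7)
The plan is to extract a finite set of ``critical'' grades from a minimal presentation of $M$, build a grid $\G$ whose image contains all of them, and define $\DiscreteMod$ by sampling $M$ along $\G$. The isomorphism $M\cong \CoEx_\G(\DiscreteMod)$ will then follow from \cref{Lem:GradesOfInfluenceAndIsomorphisms}, which guarantees that $M(\fl_\G(a),a)$ is an isomorphism whenever $\fl_\G(a)$ and $a$ have the same set of influencing grades.

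Concretely, fix a finite minimal presentation $\langle \W|\Y\rangle$ of $M$, which exists by \cref{MinPresentations}. By \cref{MinPresAndBetti}, $\supp\xi_0(M)=\gr(\W)$ and $\supp\xi_1(M)=\gr(\Y)$, so both supports are finite. For each $i\in\{1,\ldots,n\}$, let $S_i\subseteq\R$ be the (finite) set of $i$-th coordinates of points in $\gr(\W)\cup\gr(\Y)$, and choose any strictly increasing $\G_i:\Z\to \R$ with $S_i\subseteq\im \G_i$, $\lim_{z\to-\infty}\G_i(z)=-\infty$, and $\lim_{z\to\infty}\G_i(z)=\infty$; this is easy since $S_i$ is finite (enumerate $S_i$ in order and interpolate arbitrarily on the remaining integers). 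Set $\G=(\G_1,\ldots,\G_n)$, and define $\DiscreteMod:\ZCat^n\to\kvect$ by $\DiscreteMod_z=M_{\G(z)}$ with structure maps $\DiscreteMod(z,z')=M(\G(z),\G(z'))$. Pulling back the generating set $\W$ of $M$ along $\G$ produces a finite generating set for $\DiscreteMod$ concentrated at the integer grades in $\G^{-1}(\gr(\W))$, so $\DiscreteMod$ is finitely generated.

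It remains to exhibit a natural isomorphism $\eta:\CoEx_\G(\DiscreteMod)\to M$. Since each $\G_i$ is strictly increasing, $\G$ is injective, so $\fl_\G(a)\in\im\G$ for every $a\in\R^n$, and $\CoEx_\G(\DiscreteMod)_a=\DiscreteMod_{\G^{-1}(\fl_\G(a))}=M_{\fl_\G(a)}$, with transition map $\CoEx_\G(\DiscreteMod)(a,b)$ equal to $M(\fl_\G(a),\fl_\G(b))$. Define $\eta_a=M(\fl_\G(a),a)$. Because every $y\in\gr(\W)\cup\gr(\Y)$ has each coordinate in some $\im\G_i$ by construction, any such $y$ with $y\leq a$ automatically satisfies $y\leq\fl_\G(a)$; this gives $\I_M(a)\subseteq\I_M(\fl_\G(a))$, and the reverse inclusion is immediate from $\fl_\G(a)\leq a$. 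Hence \cref{Lem:GradesOfInfluenceAndIsomorphisms} shows $\eta_a$ is an isomorphism. Naturality over $a\leq b$ reduces via functoriality to the identity $M(\fl_\G(b),b)\circ M(\fl_\G(a),\fl_\G(b))=M(a,b)\circ M(\fl_\G(a),a)$, both sides equalling $M(\fl_\G(a),b)$.

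The one genuinely delicate point is the inclusion $\I_M(a)\subseteq \I_M(\fl_\G(a))$, which is precisely where the construction of $\G$ around the supports of $\xi_0(M)$ and $\xi_1(M)$ is used; everything else is bookkeeping. I do not anticipate any further obstacle.
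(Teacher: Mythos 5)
Your proposal follows essentially the same route as the paper: choose a grid $\G$ whose image covers $\supp\xi_0(M)\cup\supp\xi_1(M)$ (the paper's proof writes $\cap$, which is evidently a typo for $\cup$, since $\cap$ would give too small a grid), take $\DiscreteMod=M\circ\G$, and invoke \cref{Lem:GradesOfInfluenceAndIsomorphisms} to see that $M(\fl_\G(a),a)$ is an isomorphism. Your write-up simply fills in the details that the paper leaves to the reader (the equality $\I_M(a)=\I_M(\fl_\G(a))$, finite generation via pullback of $\W$, and naturality of $\eta$), and it does so correctly.
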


\begin{proof}
Let $\G:\Z^n\to \R^n$ be any $n$-D grid such that $\supp \xi_0(M)\cap\supp \xi_1(M)\subseteq \im \G$.  We regard $\G$ as a functor $\ZCat^n\to \RCat^n$ in the obvious way.  
Using \cref{Lem:GradesOfInfluenceAndIsomorphisms}, it's easy to check that $M$ is a continuous extension of $M\circ \G$ along $\G$.  Further, $M\circ \G$ is finitely generated; a finite presentation for $M$ induces one for $M\circ \G$ of the same size.
\end{proof}

\paragraph{Betti Numbers of Continuous Extensions}

\begin{proposition}\label{Prop:BettiNumsContinuousExtensions}
Suppose an $n$-module $M$ is a continuous extension of $\DiscreteMod$ along an injective $n$-D grid $\G$.  Then for all $i$, 
\begin{equation*}
\xi_i(M)(a)=
\begin{cases}
\xi_i(\DiscreteMod)(z) &\textup{if } a=\G(z)\textup{ for some } z\in \Z^n,\\
0 &\textup{otherwise.}
\end{cases}
\end{equation*}
\end{proposition}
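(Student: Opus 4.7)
My plan is to lift a minimal free resolution of $\DiscreteMod$ through $\CoEx_\G$ to a minimal free resolution of $M$, and then read off the Betti numbers from the generator grades. The key points are that $\CoEx_\G$ is exact, sends discrete free modules to free $n$-modules whose generators live in grades pushed forward by $\G$, and preserves minimality.

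First I would verify the structural properties of $\CoEx_\G$. Exactness follows pointwise from the definition, since $\CoEx_\G(\DiscreteMod)_a = \DiscreteMod_y$ simply selects a vector space of $\DiscreteMod$. I would also check that if $\W$ is a $\Z^n$-graded set, then $\CoEx_\G(\free[\W]) \cong \free[\G_*\W]$ as $n$-modules, where $\G_*\W$ denotes the $\R^n$-graded set with the same underlying set as $\W$ and grade function $\G \circ \gr_\W$. The injectivity of $\G$ is critical here: it forces each component function $\G_i$ to be strictly increasing, so distinct discrete generator grades map to distinct continuous generator grades, and the comparability relations underlying the shift functor are preserved.

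Next I would take a minimal free resolution $\cdots \to \free[\W_1] \to \free[\W_0] \to \DiscreteMod \to 0$, which exists by the standard inductive construction generalizing \cref{MinPresentations} to higher syzygies. Applying the exact functor $\CoEx_\G$ produces an exact sequence $\cdots \to \free[\G_*\W_1] \to \free[\G_*\W_0] \to M \to 0$ of $n$-modules. The main point to verify is that this resolution remains minimal, i.e., each differential has image in $I$ times the target free module. In the discrete setting, minimality forces every nonzero matrix entry of a differential to relate a generator of grade $z$ to a generator of grade strictly below $z$ in at least one coordinate; since each $\G_i$ is strictly increasing, this strict coordinatewise inequality persists after applying $\G$, which establishes minimality in the continuous setting. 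This minimality check is the heart of the argument and the step I expect to require the most care.

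Finally I would compute Betti numbers via $\Tor_i(\cdot, P_n/IP_n)$ using the minimal free resolution of $M$ just constructed. Minimality kills every differential after tensoring with $P_n/IP_n$, so $\Tor_i(M, P_n/IP_n)_a$ equals the degree-$a$ component of $\free[\G_*\W_i] \otimes P_n/IP_n$, whose dimension counts the generators of $\G_*\W_i$ at grade $a$. By injectivity of $\G$, this count equals $|\gr_{\W_i}^{-1}(z)|$ when $a = \G(z)$ and $0$ otherwise, and the analogous computation for $\DiscreteMod$ using its original minimal resolution yields $\xi_i(\DiscreteMod)(z) = |\gr_{\W_i}^{-1}(z)|$. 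Comparing the two expressions yields the claim.
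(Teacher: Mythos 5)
Your proof is correct in the setting the paper actually uses, but it takes a genuinely different route. The paper never touches minimal resolutions: it starts from an \emph{arbitrary} free resolution of $\DiscreteMod$, applies $\CoEx_\G$ (using exactness), and then compares the two complexes obtained by tensoring with $P_n/I$ (respectively with $\An/J$, for $J$ the maximal graded ideal of $\An$) grade by grade, producing explicit isomorphisms $F^i_z\to G^i_a$ carrying $JF^i_z$ onto $IG^i_a$; the claim follows by comparing homology at each grade, and the case $a\notin\im\G$ is handled simply by observing $(G^i/IG^i)_a=0$. You instead push a \emph{minimal} resolution through $\CoEx_\G$, which obliges you to check that $\CoEx_\G(\free[\W])\cong\free[\G_*\W]$ (true, and your point that injectivity of $\G$ — hence strict monotonicity of each $\G_i$ — is what makes the grade bookkeeping work is exactly the right one) and that minimality is preserved (your matrix-entry argument is correct, since a nonzero entry forces a strict coordinatewise drop in grade, which survives under strictly increasing $\G_i$); you then invoke the all-$i$ analogue of \cref{MinPresAndBetti}, which your ``differentials vanish after tensoring with $P_n/I$'' computation supplies. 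What your route buys is a cleaner structural picture (Betti numbers literally are the generator grades of a minimal resolution, transported by $\G$); what the paper's route buys is that it needs no minimal resolution at all. That difference matters in one place: your appeal to the existence of a minimal free resolution of $\DiscreteMod$ (``the standard inductive construction generalizing \cref{MinPresentations}'') is valid for finitely generated $\DiscreteMod$, but graded Nakayama-type arguments can fail for arbitrary $\Z^n$-graded modules (e.g.\ $K[x,x^{-1}]$ over $K[x]$ has no minimal generating set), whereas the proposition as stated carries no finiteness hypothesis and the paper's argument covers that generality. Since every use in the paper (via \cref{Prop:Finitely_Presented_Implies_Discrete}) has $\DiscreteMod$ finitely generated, this is a restriction of scope rather than an error, but you should state the hypothesis explicitly.
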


RIVET exploits \cref{Prop:BettiNumsContinuousExtensions} to compute the Betti numbers of finitely presented $\R^2$-indexed persistence modules, by appealing to local formulae for the Betti numbers of $\Z^2$-indexed persistence modules; see \cref{sec:ComputingBettiNumbers}.

\begin{proof}[Proof of \cref{Prop:BettiNumsContinuousExtensions}]
Let \[\cdots \xrightarrow{\alpha^3} F^2 \xrightarrow{\alpha^2} F^1\xrightarrow{\alpha^1} F^0\]
be a free resolution of $\DiscreteMod$.  It's easy to see that $\CoEx_\G$ preserves exactness, so 
\[\cdots \xrightarrow{\CoEx_\G(\alpha^3)} \CoEx_\G(F^2)\xrightarrow{\CoEx_\G(\alpha^2)} \CoEx_\G(F^1)\xrightarrow{\CoEx_\G(\alpha^1)} \CoEx_G(F^0)\]
is a free resolution for $M$.  Write $G^i=\CoEx_\G(F^i)$ and $\beta^i=\CoEx_\G(\alpha^i).$

By definition, $\xi_i(M)=\dimf(\mathcal H^i)$, where $\mathcal H^i$ is the $i^{\mathrm{th}}$ homology module of the following chain complex:  
\[\cdots \xrightarrow{\beta_3 \otimes \id} G^2 \otimes P_n /I \xrightarrow{\beta_2 \otimes\id} G^1\otimes P_n/I\xrightarrow{\beta_1 \otimes \id} G^0\otimes P_n/I.\]

We have two functors $\nmod\to \nmod$ acting respectively on objects by 
\begin{align*}
N\to N\otimes P_n/I,\\ 
N\to N/IN,
\end{align*}
with the action on morphisms defined in the obvious way; note that these are naturally isomorphic.  Thus the above chain complex is isomorphic to the chain complex
\[\cdots \xrightarrow{\bar \beta^3 } G^2/IG^2\xrightarrow{\bar \beta^2} G^1/IG^1\xrightarrow{\bar \beta^1} G^0/IG^0,\]
where $\bar \beta^i$ is the map induced by $\beta^i$.  Since $G^i$ is a continuous extension along $\G$, it is clear that if $a\not\in \im \G$, then $(G^i/IG^i)_a=0$.  Hence, if $a\not\in \im \G$ then $\xi_i(M)(a)=0$, as claimed.  

It remains to consider the case that $a\in \im \G$.  Let $J$ denote the maximal homogeneous ideal of $\An$.  
$\xi_i(\DiscreteMod)=\dimf(\mathcal K^i)$, for $\mathcal K^i$ the $i^{\mathrm{th}}$ homology module of the chain complex
\[\cdots \xrightarrow{\bar \alpha^3 } F^2/J F^2\xrightarrow{\bar \alpha^2} F^1/J F^1\xrightarrow{\bar\alpha^1} F^0/J F^0,\]
where $\bar \alpha^i$ is the map induced by $\alpha^i$.
If $a=\G(z)$, then by the way we have defined the functor $\CoEx_\G$, it is clear that we have 
isomorphisms $\{\psi_i:F^i_z\to G^i_a\}_{i\geq 0}$ sending $J F^i_z$ isomorphically to $IG^i_a$, such that the following diagram commutes:
\[
\xymatrix{
\cdots \ar[r]
&F^2_z \ar[r]^{\alpha^2_z}\ar[d]^{\psi_2} &F^1_z\ar[r]^{\alpha^1_z}\ar[d]^{\psi_1} &F^0_z\ar[d]^{\psi_0}\\
\cdots \ar[r]
&G^2_a \ar[r]^{\beta^2_a} &G^1_a\ar[r]^{\beta^1_a} &G^0_a.
}
\]
Taking quotients, we obtain a commutative diagram:
\[
\xymatrix{
\cdots \ar[r]
&(F^2/J F^2)_z \ar[r]^{\bar \alpha^2_z}\ar[d]^{\simeq} &(F^1/J F^1)_z\ar[r]^{\bar\alpha^1_z}\ar[d]^{\simeq} &(F^0/J F^0)_z\ar[d]^{\simeq}\\
\cdots \ar[r]
&(G^2/I G^2)_a \ar[r]^{\bar\beta^2_a} &(G^1/I G^1)_a\ar[r]^{\bar\beta^1_a} &(G^0/I G^0)_a.
}
\]

It follows that ${\mathcal K}^i_z\simeq {\mathcal H}^i_a$, so $\xi_i(M)_a=\xi_i(\DiscreteMod)_z$ as desired.
\end{proof}

\paragraph{Barcodes of Discrete Persistence Modules}
\begin{sloppypar}
We discussed the barcodes of ($\R$-indexed) 1-D persistence modules in \cref{Sec:Multi_D_Persistent_Homology}.  The structure theorem of \cite{webb1985decomposition} tells us that the barcode $\B{\DiscreteMod}$ of a discrete 1-D persistence module $\DiscreteMod$ is also well defined, provided $\dim(V_a)<\infty$ for all $a$ less than some $a_0\in \Z$.  When $\DiscreteMod$ is finitely generated, $\B{Q}$ is a finite multiset of intervals $[a,b)$ with $a<b\in \Z\cup \infty$.
\end{sloppypar}

\paragraph{Barcodes Under Continuous Extension}
We omit the easy proof of the following:

\begin{proposition}\label{BarcodesContinuousExtensions}
For $\DiscreteMod$ a finitely generated discrete 1-D persistence module and $M$ a continuous extension of $\DiscreteMod$ along $\G$,
\[\B{M}=\{[\G(a),\G(b))\mid [a,b)\in \B{\DiscreteMod},\, \G(a)<\G(b) \},\]
where we define $\G(\infty)=\infty$.  
\end{proposition}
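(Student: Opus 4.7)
The plan is to decompose $\DiscreteMod$ into interval summands via the structure theorem of \cite{webb1985decomposition}, push this decomposition through the functor $\CoEx_\G$, and compute the continuous extension of each interval summand by a direct case analysis on stalks.

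First, write $\DiscreteMod$ as a finite direct sum of discrete interval modules, one summand $J_{[a,b)}$ (supported on $[a,b)\cap\Z$ with identity transitions) for each $[a,b)\in\B{\DiscreteMod}$; this is legitimate because $\DiscreteMod$ is finitely generated. The functor $\CoEx_\G$ acts on a $\Z$-indexed module $N$ by $\CoEx_\G(N)_c = N_{y(c)}$, where $y(c)=\max\{w\in\Z\mid \G(w)=\fl_\G(c)\}$ depends only on $c$ and $\G$, so $\CoEx_\G$ is additive and commutes with finite direct sums. Hence
\[
M \cong \bigoplus_{[a,b)\in\B{\DiscreteMod}} \CoEx_\G(J_{[a,b)}),
\]
and it suffices to identify each summand.

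Second, fix $[a,b)\in\B{\DiscreteMod}$. Using that $\G$ is non-decreasing and that $\G(a),\G(b)\in\im\G$, I would check three cases: (i) if $c<\G(a)$, then $\G(y(c))=\fl_\G(c)\leq c<\G(a)$, forcing $y(c)<a$; (ii) if $c\geq\G(b)$, then $\fl_\G(c)\geq\G(b)$, and the maximum of $\{w\in\Z\mid\G(w)=\fl_\G(c)\}$ is at least $b$ by monotonicity; (iii) if $\G(a)\leq c<\G(b)$, the previous two bounds together force $y(c)\in[a,b)\cap\Z$. Thus $\CoEx_\G(J_{[a,b)})_c \cong K$ for $c\in[\G(a),\G(b))$ and is $0$ elsewhere. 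Since $c\leq c'$ also implies $y(c)\leq y(c')$, the transition maps between nonzero stalks are identities, so $\CoEx_\G(J_{[a,b)})$ is the $\R$-indexed interval module on $[\G(a),\G(b))$ when $\G(a)<\G(b)$, and is the zero module when $\G(a)=\G(b)$. The convention $\G(\infty)=\infty$ handles $b=\infty$ uniformly.

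Combining these two steps realizes $M$ as a direct sum of $\R$-indexed interval modules indexed exactly by the claimed barcode multiset, from which the formula follows. The main obstacle is the monotonicity-based bookkeeping in the case analysis above; the zero case, where a discrete interval collapses to a single grid value under $\G$, is precisely what necessitates the hypothesis $\G(a)<\G(b)$ in the statement.
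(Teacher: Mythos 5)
The paper omits this proof, labeling it easy, so there is no reference argument to compare against; your decomposition-into-interval-summands argument is correct and is surely what the authors had in mind. The functor $\CoEx_\G$ is pointwise defined, hence additive, so it suffices to compute $\CoEx_\G$ on each discrete interval module, and your three-way case analysis on $y(c)=\max\{w\in\Z\mid\G(w)=\fl_\G(c)\}$, together with the monotonicity observation that $c\leq c'$ implies $y(c)\leq y(c')$, correctly identifies each extension as the $\R$-indexed interval module on $[\G(a),\G(b))$ (or zero when $\G(a)=\G(b)$, which is exactly why the hypothesis $\G(a)<\G(b)$ appears in the statement).
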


\begin{remark}\label{Rmk:Extensions_On_L}
As already noted in \cref{sec:rank_inv_fibered_barcodes}, for $\LCat$ the poset category corresponding to a line $L\in \bar L$, $\LCat$ is isomorphic to $\RCat$.  Hence, by adapting the definitions given above in the $\R$-indexed setting,  we can define the grid function $\G:\Z\to L$, a function $\fl_\G: L\to \im G$, and the functor $\CoEx_\G:\kVect^{\ZCat}\to \kVect^{\LCat}$.  As in the $\R$-indexed case, we say $M:\LCat\to \kvect$ is a continuous extension of a $\Z$-indexed persistence module $\DiscreteMod$ if $M\cong \CoEx_\G(\DiscreteMod)$.  Clearly then, \cref{BarcodesContinuousExtensions} also holds for continuous extensions in the $L$-indexed setting.  In \cref{Sec:Query_Theorem}, we will use \cref{BarcodesContinuousExtensions} in the $L$-indexed setting to prove our main result on queries of augmented arrangements.
\end{remark}       

\section{Augmented Arrangements of 2-D Persistence Modules}\label{Sec:Arrangements}
In this section we define the augmented arrangement $\S(M)$ associated to a finitely presented 2-D persistence module $M$. 

First, in \cref{sec:arr_def} we define the line arrangement $\cell(M)$ associated to $M$.  Next, in \cref{Sec:CriticalLines} we present a characterization of the 1-skeleton of $\cell(M)$.  Finally, using this characterization, in \cref{Sec:Discrete_Barcodes} we define the barcode template $\P^e$ stored at a 2-cell $e$ of $\cell(M)$.

The augmented arrangement $\S(M)$ is defined to be the arrangement $\cell(M)$, together with the additional data $\P^e$  at each $2$-cell $e$.

\subsection{Definition of \texorpdfstring{$\cell(M)$}{A(M)}}\label{sec:arr_def}
\nomenclature[S]{$\XiSp$}{union of the supports of the $0^{\mathrm{th}}$ and $1^{\mathrm{st}}$ bigraded Betti numbers of $M$}
Let $\XiSp=\supp \xi_0(M)\cup\supp \xi_1(M)$.  To keep our exposition simple, we will assume that each element of $\XiSp$ has non-negative $x$-coordinate.  Using the shift construction described in \cref{Sec:Free_Modules_Presentations}, we can always translate the indices of $M$ so that this assumption holds, so there is no loss of generality in this assumption.

\paragraph{Point-Line Duality}
Recall the definitions of $\L$ and $\bar \L$ from \cref{sec:rank_inv_fibered_barcodes}.  As mentioned there, a standard point-line duality gives a parameterization of $\L$ by the half-plane $[0,\infty) \times \R$.  We now explain this. 
Define dual transforms $\dual_\ell$ and $\dual_p$ as follows: \nomenclature[Dl]{$\dual_\ell$, $\dual_p$}{point-line duality transforms}
\begin{align*}
	\dual_\ell : \L &\to [0,\infty) \times \R  & 	\dual_p : [0,\infty) \times \R &\to \L \\
	y=ax+b &\mapsto (a,-b) &			(c,d) &\mapsto y=cx-d
\end{align*}
This duality does not extend naturally to vertical lines, i.e.\ lines in $\bar \L-\L$.  

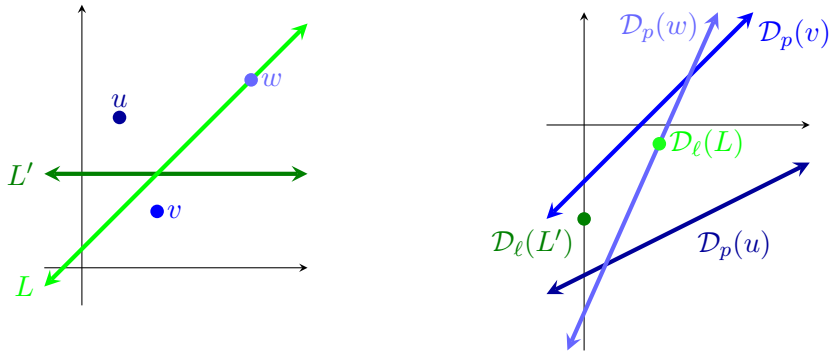
\begin{figure}[ht]
  \begin{center}
    \begin{tikzpicture}
        \draw[->] (-0.5,0) -- (3,0);
        \draw[->] (0,-0.5) -- (0,3.5);
        
        \draw[<->,ultra thick,color=green!50!black] (-0.5,1.25) -- (3,1.25);
        \node[left,color=green!50!black] at (-0.5,1.25) {$L'$};
        \draw[<->,ultra thick,color=green] (-0.5,-0.25) -- (3,3.25);
        \node[left,color=green] at (-0.5,-0.25) {$L$};
          
        \coordinate (p1) at (0.5,2);
        \fill[blue!60!black] (p1) circle (0.09);
        \node[above,color=blue!60!black] at (p1) {$u$};
      
        \coordinate (p2) at (1,0.75);
        \fill[blue] (p2) circle (0.09);
        \node[right,color=blue] at (p2) {$v$};
        
        \coordinate (p3) at (2.25,2.5);
        \fill[blue!60!white] (p3) circle (0.09);
        \node[right,color=blue!60!white] at (p3) {$w$};
        
        \node at (0,-1) {};
      \end{tikzpicture}
      \hspace{.8in}
      \begin{tikzpicture}
        \draw[->] (-0.5,0) -- (3,0);
        \draw[->] (0,-3) -- (0,1.5);
        
        \draw[<->,ultra thick,color=blue!60!black] (-0.5,-2.25) -- (3,-0.5);
        \node[color=blue!60!black] at (2,-1.6) {$\dual_p(u)$};
        \draw[<->,ultra thick,color=blue] (-0.5,-1.25) -- (2.25,1.5);
        \node[color=blue] at (2.8,1.2) {$\dual_p(v)$};
        \draw[<->,ultra thick,color=blue!60!white] (-0.222,-3) -- (1.778,1.5);
        \node[color=blue!60!white] at (1,1.3) {$\dual_p(w)$};
        
        \coordinate (q1) at (0,-1.25);
        \fill[green!50!black] (q1) circle (0.09);
        \node[below left,color=green!50!black] at (q1) {$\dual_\ell(L')$};
        
        \coordinate (q2) at (1,-0.25);
        \fill[green!90] (q2) circle (0.09);
        \node[right,color=green] at (q2) {$\dual_\ell(L)$};
    \end{tikzpicture}
  \end{center}
  \caption{Illustration of point-line duality}
  \label{fig:duality}
\end{figure}

The following lemma, whose proof we omit, is illustrated in \cref{fig:duality} by the point $w$ and line $L$.
\begin{lemma}\label{Lem:description_of_lcm_lines}
The transforms $\dual_\ell$ and $\dual_p$ are inverses and preserve incidence, in the sense that for $w\in [0,\infty) \times \R$ and $L\in \L$, $w\in L$ if and only if $\dual_\ell(L)\in \dual_p(w)$.  

\end{lemma}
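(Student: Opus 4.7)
The plan is to verify the two claims of the lemma by direct calculation, exploiting the explicit formulas given for $\dual_\ell$ and $\dual_p$.

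First I would show that $\dual_\ell$ and $\dual_p$ are mutually inverse. Starting with a line $L: y = ax+b$ in $\L$, I compute $\dual_p(\dual_\ell(L)) = \dual_p(a,-b)$, which by definition is the line $y = ax - (-b) = ax + b = L$. Conversely, for a point $(c,d) \in [0,\infty)\times \R$, I compute $\dual_\ell(\dual_p(c,d)) = \dual_\ell(y=cx-d) = (c, -(-d)) = (c,d)$. Hence each composition is the identity.

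For the incidence-preserving property, let $w = (c,d) \in [0,\infty)\times \R$ and $L: y = ax+b \in \L$. Then by definition $w \in L$ if and only if $d = ac + b$. On the dual side, $\dual_\ell(L) = (a,-b)$ and $\dual_p(w)$ is the line $y = cx - d$, so $\dual_\ell(L) \in \dual_p(w)$ if and only if $-b = ca - d$, i.e., $d = ac + b$. Since these two conditions coincide, incidence is preserved in both directions.

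There is essentially no obstacle here: the proof is a one-line algebraic manipulation in each case. The only thing worth flagging is that the duality is only defined for non-vertical lines (elements of $\L$, not all of $\bar\L$), which is exactly why the hypothesis restricts $L$ to $\L$; this matches the remark immediately preceding the lemma that the duality does not extend to vertical lines.
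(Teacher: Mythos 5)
Your proof is correct. The paper explicitly omits the proof of this lemma as a routine calculation, and your argument is exactly that calculation: verifying that the two compositions are the identity by unwinding the formulas, and checking that both incidence conditions reduce to $d = ac + b$. Nothing to add.
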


\paragraph{Line Arrangements in $[0,\infty)\times \R$}
A \emph{cell} is a topological space homeomorphic to $\R^n$ for some $n\geq 0$.  We define a \emph{cell complex} on $[0,\infty)\times \R$ to be a decomposition of $[0,\infty)\times \R$ into a finite number of cells, so that the topological boundary of each cell lies in the union of cells of lower dimension.  By standard point-set topology, each cell in a cell complex on $[0,\infty)\times \R$ has dimension at most 2.  According to the definition, a cell complex on $[0,\infty)\times \R$ is not a CW-complex, as some cells will necessarily be unbounded.  

By a \emph{line arrangement} in $[0,\infty)\times \R$, we mean the cell complex on $[0,\infty)\times \R$ induced by a set $W$ of lines in $[0,\infty)\times \R$.  In this cell complex, the 1-skeleton consists of the union of all lines in $W$, together with the line $x=0$.

\paragraph{Definition of \texorpdfstring{$\cell(M)$}{A(M)}}
For $W=\{(a_1,b_1), (a_2,b_2), \ldots, (a_l,b_l)\}$ a finite subset of $\R^2$, let $\lub(S)$, the \emph{least upper bound of $W$} be given by \[\lub(S)=(\max_{i}\, a_i,\,\max_{i}\, b_i).\]
\nomenclature[lub]{$\lub$}{least upper bound}
For example, $\lub((3,5),(7,4))=(7,5)$.

Say that a pair of distinct elements $u,v\in \XiSp$ is \emph{weakly incomparable} if one of the following is true:
\begin{itemize*}
\item $u$ and $v$ are incomparable with respect to the partial order on $\R^2$,
\item $u$ and $v$ share either their first or second coordinate. 
\end{itemize*}
Call $\alpha\in [0,\infty)\times \R$ an \emph{anchor} if $\alpha=\lub(u,v)$ for  $u,v\in \XiSp$ weakly incomparable.

We define $\cell(M)$ to be the line arrangement in $[0,\infty)\times \R$ induced by the set of lines
\[ \{\dual_p(\alpha)\mid \alpha \textup{ is an anchor}\}. \]  
In view of \cref{Lem:description_of_lcm_lines}, then, the 1-skeleton $\cell^1(M)$ of $\cell(M)$ is given by
\begin{equation}\label{Eq:Description_Of_One_Skel}
\cell^1(M)=\{\dual_\ell(L)\mid L\in \L \textup{ contains an anchor} \}\cup (\{0\}\times \R).
\end{equation}  
It is clear that $\cell(M)$ is completely determined by $\XiSp$.

\paragraph{0-cells of $\cell(M)$}
Note that for $\alpha\ne \beta$ two anchors, $\dual_p(\alpha)$ and $\dual_p(\beta)$ intersect in $[0,\infty)$ if and only if there exists some $L\in \L$ containing both $\alpha$ and $\beta$; such a line $L$ exists if and only if $\alpha$ and $\beta$ are comparable and have distinct $x$-coordinate.  

\paragraph{Size of $\cell(M)$}
We now bound the number of cells in $\cell(M)$ of each dimension.  As in \cref{Sec:Intro_Complexity}, let $\kappa =\kappa_x \kappa_y$, for $\kappa_x$ and $\kappa_y$ the number of unique $x$ and $y$ coordinates, respectively, of points in $\XiSp$.  
 Clearly, the number of anchors for $\XiSp$ is bounded above by $\kappa$.  Hence the number of lines in $\cell(M)$ is also bounded above by $\kappa$.  
 
 Precise bounds on the number of vertices, edges, and faces in an arbitrary line arrangement are well known, and can be computed by simple counting arguments.  These bounds tell us that the number of vertices, edges, and faces in $\cell(M)$ is each not greater than $\kappa^2$.  

\subsection{Characterization of the 1-Skeleton of \texorpdfstring{$\cell(M)$}{A(M)}}\label{Sec:CriticalLines}
We next give our alternate description of the 1-skeleton of $\cell(M)$.  To do so, we first define the set $\crit(M)$ of \emph{critical lines} in $\L^{\circ}$.\nomenclature[L]{$\L^\circ$}{space of affine lines in $\R^2$ with finite, positive slope}  Here $\L^{\circ}$ denotes the topological interior of $\L$, i.e.\ the set of affine lines in $\R^2$ with positive, finite slope.  

\paragraph{The Push Map}
Note that for each $L\in \bar \L$, the partial order on $\R^2$ restricts to a total order on $L$.  This extends to a total order on $\cupinf{L}$ by taking $v<\infty$ for each $v\in L$.  For $L\in \bar\L$, define the \emph{push map} 
$\push_L:\R^2 \to \cupinf{L}$ by taking \[\push_L(a)=\min\{v\in L\mid a\leq v\}.\]
\nomenclature[push]{$\push_L$}{push map}
Note that:
\begin{itemize} 
\item $\infty\in \im \push_L$ if and only if $L$ is horizontal or vertical.
\item For $a=(a_1,a_2)\in \R^2$ and $\push_L(u)=(v_1,v_2)\in L$, either $a_1=v_1$ or $a_2=v_2$; see \cref{fig:push_map}.
\item For $r<s\in \R^2$, $\push_L(r)\leq \push_L(s)$.
\end{itemize}

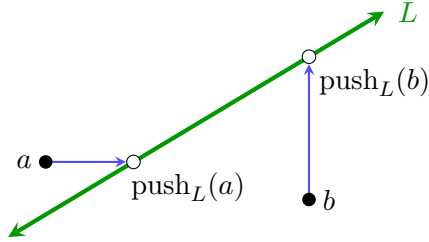
\begin{figure}[ht]
  \begin{center}
    \begin{tikzpicture}
      \draw[<->,ultra thick,color=green!60!black] (0,0) -- (5,3);
       \node[right,color=green!60!black] at (5.05,3) {$L$};   
      
      \coordinate (u) at (0.5,1);
      \coordinate(pu) at (1.667,1);
      \draw[->,thick,blue!70!white] (u) -- (1.567,1);      
      \fill (u) circle (0.09);
      \draw[fill=white] (pu) circle (0.09);
      \node[left] at (0.45,1) {$a$};
      \node[below right] at (1.5,1) {$\push_L(a)$};
      
      \coordinate (v) at (4,0.5);
      \coordinate(pv) at (4,2.4);
      \draw[->,thick,blue!70!white] (v) -- (4,2.3);      
      \fill (v) circle (0.09);
      \draw[fill=white] (pv) circle (0.09);
      \node[right] at (4.05,0.5) {$b$};
      \node[below right] at (pv) {$\push_L(b)$};
    \end{tikzpicture}
  \end{center}
  \caption{Illustration of the push map for lines of positive, finite slope}
  \label{fig:push_map}
\end{figure}

\paragraph{Continuity of Push Maps}
For any $a\in \R^2$, the maps $\{\push_L\}_{L\in \bar \L}$ induce a map $\push_{a}:\bar \L \to \R^2 \cup \infty$, defined by $\push_{a}(L)=\push_L(a)$.  

Recall that we consider $\bar \L$ as a topological space, with the topology the restriction of topology on the affine Grassmannian of 1-D lines in $\R^2$.  

\begin{lemma}\label{Lem:ContinuityOfPush}
For each $a\in \R^2$, $\push_{a}$ is continuous on $\L^{\circ}$.  
\end{lemma}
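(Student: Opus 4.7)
The plan is to parameterize $\L^\circ$ concretely and exhibit $\push_a$ as a composition of continuous functions. First, I would identify $\L^\circ$ with the open half-plane $\{(m,b)\in\R^2\mid m>0\}$ via the map sending $(m,b)$ to the line $L_{m,b}=\{(x,mx+b)\mid x\in\R\}$. This identification is a homeomorphism for the topology $\L^\circ$ inherits from $\bar\L$ (which itself is the subspace topology from the affine Grassmannian); this is a standard fact about the chart on the Grassmannian consisting of non-vertical lines, restricted to the open locus where the slope is positive. Note also that $\push_L(a)\in\R^2$ whenever $L\in\L^\circ$, so the target space may as well be $\R^2$ with its usual topology, and we need not worry about the point at infinity.

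Next, I would derive a closed-form expression for $\push_L(a)$ in these coordinates. Write $a=(a_1,a_2)$ and fix $L=L_{m,b}\in\L^\circ$. Any $v=(v_1,mv_1+b)\in L$ satisfies $a\leq v$ iff $v_1\geq a_1$ and $mv_1+b\geq a_2$, i.e.\ iff $v_1\geq\max(a_1,(a_2-b)/m)$ (here we use $m>0$). Since the total order on $L$ agrees with the order on the first coordinate, the minimum such $v$ is attained at equality, giving
\[
\push_L(a)=\Bigl(\max\bigl(a_1,\tfrac{a_2-b}{m}\bigr),\;\max(ma_1+b,\,a_2)\Bigr).
\]
A quick case check on the sign of $ma_1+b-a_2$ confirms that the two coordinates agree with the geometric description: on one side, $\push_L(a)$ is the vertical projection of $a$ onto $L$, and on the other, it is the horizontal projection; on the locus $ma_1+b=a_2$, the two formulas coincide and equal $a$.

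Finally, continuity is immediate. Each coordinate of the above formula is the maximum of two functions of $(m,b)$, each of which is continuous on $\{m>0\}$ (the rational function $(a_2-b)/m$ has nonvanishing denominator on this domain), and the maximum of two continuous real-valued functions is continuous. Thus $\push_a$ is continuous on $\L^\circ$, as required. The only real (and minor) obstacle is the case-analysis needed to justify the unified max-formula; once that is in place, continuity falls out from elementary observations.
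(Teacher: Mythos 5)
Your proof is correct and follows essentially the same route as the paper's: both observe that $\push_L(a)$ is the intersection of $L$ with the union of the vertical ray above $a$ and the horizontal ray to the right of $a$, and deduce continuity from there. The only difference is that you make this explicit in coordinates $(m,b)$, which replaces the paper's ``the result follows readily'' with a concrete max-formula; this is a harmless (and arguably clearer) elaboration, not a different argument.
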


\begin{proof}
Note that for any $a=(a_1,a_2)$ and $L\in \L^{\circ}$, $\push_{a}(L)$ is the unique intersection of $L$ with \[\{(a_1,y)\mid y\geq a_2\} \cup \{(x,a_2)\mid x\geq a_1\}.\]  From this, the result follows readily.
\end{proof}

\paragraph{Critical Lines}
For $L\in \L^\circ{}$, $\push_L$ induces a totally ordered partition $\XiSp^L$ of $\XiSp$: elements of the partition are restrictions of levelsets of $\push_L$ to $\XiSp$, and the total order on $\XiSp^L$ is the pullback of the total order on $L$.  
This partition is illustrated in \cref{fig:partition}.

\begin{figure}[ht]
  \begin{center}
    \begin{tikzpicture}
      \draw[<->,ultra thick,color=green!60!black] (0,0) -- (5,3);
       \node[right,color=green!60!black] at (5.05,3) {$L$};   
      
      \coordinate (u) at (0.5,0.8);
      \coordinate (pu) at (1.333,0.8);
      \draw[->,thick,blue!70!white] (u) -- (1.233,0.8);
      \fill (u) circle (0.09);
      \draw[fill=white] (pu) circle (0.09);
      \draw[dashed,rounded corners=4pt] (0.3,0.6) rectangle (0.7,1);
      \node[left] at (0.3,0.8) {$\XiSp^L_1$};
      
      \coordinate (v1) at (0.2,1.5);
      \coordinate (v2) at (1.3,1.5);
      \coordinate (pv) at (2.5,1.5);
      \draw[->,thick,blue!70!white] (v1) -- (2.4,1.5);
      \fill (v1) circle (0.09);
      \fill (v2) circle (0.09);
      \draw[fill=white] (pv) circle (0.09);
      \draw[dashed,rounded corners=4pt] (0,1.3) rectangle (1.5,1.7);
      \node[left] at (0,1.5) {$\XiSp^L_2$};
      
      \coordinate (w1) at (3,0.5);
      \coordinate (w2) at (3,1.1);
      \coordinate (pw) at (3,1.8);
      \draw[->,thick,blue!70!white] (w1) -- (3,1.7);
      \fill (w1) circle (0.09);
      \fill (w2) circle (0.09);
      \draw[fill=white] (pw) circle (0.09);
      \draw[dashed,rounded corners=4pt] (2.8,0.3) rectangle (3.2,1.3);
      \node[below] at (3,0.3) {$\XiSp^L_3$};
      
      \coordinate (x1) at (3.8,0.4);
      \coordinate (x2) at (3.8,1.5);
      \coordinate (px) at (3.8,2.28);
      \draw[->,thick,blue!70!white] (x1) -- (3.8,2.18);
      \fill (x1) circle (0.09);
      \fill (x2) circle (0.09);
      \draw[fill=white] (px) circle (0.09);
      \draw[dashed,rounded corners=4pt] (3.6,0.2) rectangle (4,1.7);
      \node[below] at (3.8,0.2) {$\XiSp^L_4$};
      
      \coordinate (y1) at (0.5,2.6);
      \coordinate (y2) at (1.5,2.6);
      \coordinate (y3) at (3.5,2.6);
      \coordinate (py) at (4.333,2.6);
      \draw[->,thick,blue!70!white] (y1) -- (4.233,2.6);
      \fill (y1) circle (0.09);
      \fill (y2) circle (0.09);
      \fill (y3) circle (0.09);
      \draw[fill=white] (py) circle (0.09);
      \draw[dashed,rounded corners=4pt] (0.3,2.4) rectangle (3.7,2.8);
      \node[left] at (0.3,2.6) {$\XiSp^L_5$};
    \end{tikzpicture}
  \end{center}
  \caption{The totally ordered partition $\XiSp^L$ of $\XiSp$.  The $i^{\mathrm{th}}$ element of the partition is labeled as $\XiSp^L_i$.}
  \label{fig:partition}
\end{figure}

We call $L\in \L^{\circ}$ \emph{regular} if there is an open ball $B \in \L^\circ$ containing $L$ such that $\XiSp^{L}=\XiSp^{L'}$ for all $L'\in B$.  We call $L\in \L^{\circ}$ \emph{critical} if it is not regular.  Let $\crit(M)$ denote the set of critical lines in $\L^{\circ}$.

\begin{theorem}[Characterization of the 1-Skeleton of $\cell(M)$] \label{Thm:ArrangementCharacterization}
The 1-skeleton of $\cell(M)$ is exactly \[\dual_\ell(\crit(M)) \cup \left(\{0\}\times \R\right).\]
\end{theorem}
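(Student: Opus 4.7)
The plan is to use equation (\ref{Eq:Description_Of_One_Skel}) to reduce the statement to showing that for $L \in \L^\circ$, $L$ is critical if and only if $L$ contains an anchor. Horizontal lines $L\in \L\setminus \L^\circ$ have $\dual_\ell(L)\in \{0\}\times \R$, so such lines are automatically accounted for on the right-hand side, and $\dual_\ell$ is a bijection of $\L^\circ$ with $(0,\infty)\times \R$. Thus the whole claim reduces to characterizing criticality in $\L^\circ$ in terms of incidence with anchors.

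The heart of the argument is a case analysis of when $\push_L(u)=\push_L(v)$ for distinct $u,v\in \XiSp$ and $L\in \L^\circ$. I would show: (i) if $u,v$ are strongly comparable (componentwise comparable with no shared coordinate), then $\push_L(u)\neq \push_L(v)$ for every $L\in \L^\circ$, since the pushes necessarily lie on two distinct horizontal/vertical rays that do not meet on a positively sloped line; (ii) if $u,v$ are weakly incomparable, then letting $\alpha=\lub(u,v)$, one has $\push_L(u)=\push_L(v)$ iff $\alpha$ lies on or above $L$ (equivalently, iff $L$ evaluated at $\alpha_1$ is $\leq \alpha_2$), with equality precisely when $\alpha\in L$. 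Consequently the set $S_{u,v}:=\{L\in \L^\circ\mid \push_L(u)=\push_L(v)\}$ is either empty or a closed region of $\L^\circ$ whose topological boundary is exactly $C_\alpha:=\{L\in \L^\circ\mid \alpha\in L\}$.

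The classes of the equivalence relation $\sim_L$ on $\XiSp$ induced by $\push_L$ are determined by the indicator variables $[L\in S_{u,v}]$, which are locally constant off $\bigcup_\alpha C_\alpha$, the union running over all anchors. The induced order on the classes is preserved under small perturbations by continuity of the push maps (\cref{Lem:ContinuityOfPush}), since two distinct pushes cannot swap without colliding, which would force a new identification. Hence $\XiSp^L$ is locally constant on $\L^\circ \setminus \bigcup_\alpha C_\alpha$, so any $L$ containing no anchor is regular. Conversely, if $L$ contains an anchor $\alpha=\lub(u,v)$, then in every neighborhood of $L$ in $\L^\circ$ there exist lines with $\alpha$ strictly above and lines with $\alpha$ strictly below; these lie on opposite sides of $\partial S_{u,v}=C_\alpha$, so the class of $u$ relative to $v$ changes, and $L$ is critical.

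The main obstacle I anticipate is the case analysis for the identity $\push_L(u)=\push_L(v)$, especially the weakly-incomparable subcase in which $u$ and $v$ share a coordinate and $\alpha=\lub(u,v)$ coincides with one of them. There the condition is not simply $\alpha\in L$ but rather the half-space condition $\alpha$ on or above $L$; however this half-space still has $C_\alpha$ as its topological boundary in $\L^\circ$, which is all that matters for identifying the critical lines.
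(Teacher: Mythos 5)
Your overall structure is the same as the paper's: both proofs reduce, via Eq.~\eqref{Eq:Description_Of_One_Skel}, to showing that $L\in\L^\circ$ is critical if and only if $L$ contains an anchor, and both hinge on pinning down exactly when $\push_L(u)=\push_L(v)$. However, your key claim (ii) is wrong for the main subcase. For $u,v\in\XiSp$ genuinely incomparable with $\alpha=\lub(u,v)$, one does \emph{not} have $\push_L(u)=\push_L(v)$ iff $\alpha$ lies on or above $L$; the correct condition is $\alpha\in L$. To see this, take $u=(0,2)$, $v=(2,0)$, $\alpha=(2,2)$, and $L:y=x-1$. Then $\alpha$ lies strictly above $L$ (so your criterion predicts equality), yet $\push_L(u)=(3,2)\neq(2,1)=\push_L(v)$. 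The reason: $\push_L(u)$ always shares a coordinate with $u$ and $\push_L(v)$ with $v$ (this is the remark in the paper immediately after the push map is defined); combining the two constraints forces the common push, if it exists, to be $\alpha$ itself, which then forces $\alpha\in L$. Separately, in the shared-$x$-coordinate subcase (say $u_1=v_1$, $u_2<v_2$, so $\alpha=v$), the half-space condition is reversed: $\push_L(u)=\push_L(v)$ iff $\alpha$ lies on or \emph{below} $L$; your claim is correct only when $u,v$ share the $y$-coordinate. Your final paragraph even hints that you know the incomparable case gives ``$\alpha\in L$'', which contradicts the uniform half-space statement in (ii).

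These errors do not destroy your argument, because the one thing you ultimately use is that $S_{u,v}$ is closed with topological boundary equal to $C_\alpha$ in $\L^\circ$, and that remains true whether $S_{u,v}$ is the line $C_\alpha$ itself (incomparable case) or a closed half-plane bounded by $C_\alpha$ (shared-coordinate cases). But the converse step as you wrote it — ``lines strictly above and strictly below $\alpha$ lie on opposite sides of $\partial S_{u,v}$, so the class of $u$ relative to $v$ changes'' — is reasoned incorrectly for incomparable $u,v$: both sides of $C_\alpha$ have $[L'\in S_{u,v}]=0$, so the indicator does \emph{not} differ between the two sides. What actually changes is the value at $L$ itself ($u\sim_L v$ because $\alpha\in L$) versus any nearby $L'\notin C_\alpha$ ($u\not\sim_{L'}v$). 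You should rewrite (ii) to give the three conditions correctly (incomparable: $\alpha\in L$; shared $y$: $L(\alpha_1)\leq\alpha_2$; shared $x$: $L(\alpha_1)\geq\alpha_2$), and fix the converse so that it compares $L$ to nearby lines rather than comparing the two sides of $C_\alpha$ to each other. Modulo these corrections, your approach is sound, and it is essentially the argument the paper gives, just organized around the explicit description of each $S_{u,v}$ rather than the paper's more ad hoc treatment of the no-anchor case.
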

\begin{figure}[ht]
  \begin{center}
    \begin{tikzpicture}
      \draw[<->,ultra thick,color=green!60!black] (1,-.3) -- (5.0,2.1);
       \node[below,color=green!60!black] at (4.9,1.9) {$L'$};   
      
      \coordinate (u) at (1.5,1);
      \coordinate(pu) at (3.167,1);
      \draw[->,thick,blue!70!white] (u) -- (3.067,1);      
      \fill (u) circle (0.09);
      \draw[fill=white] (pu) circle (0.09);
      \node[left] at (1.45,1) {$u$};

      \coordinate (v) at (4,-.5);
      \coordinate(pv) at (4,1.5);
      \draw[->,thick,blue!70!white] (v) -- (4,1.4);      
      \fill (v) circle (0.09);
      \draw[fill=white] (pv) circle (0.09);
      \node[right] at (4.05,-0.5) {$v$};

      \coordinate(LUB) at (4,1);
      \fill (LUB) circle (0.09);
      \draw[fill=orange] (LUB) circle (0.09);
      \node[right,color=orange] at (4.2,1) {$\lub(u,v)$};

    \end{tikzpicture}
    \hspace{.25in}
        \begin{tikzpicture}
      \draw[<->,ultra thick,color=green!60!black] (2.5,-.3) -- (6.5,2.1);
       \node[below,color=green!60!black] at (6.4,1.9) {$L''$};   
      
      \coordinate (u) at (1.5,1);
      \coordinate(pu) at (4.667,1);
      \draw[->,thick,blue!70!white] (u) -- (4.567,1);      
      \fill (u) circle (0.09);
      \draw[fill=white] (pu) circle (0.09);
      \node[left] at (1.45,1) {$u$};

      \coordinate (v) at (4,-.5);
      \coordinate(pv) at (4,0.6);
      \draw[->,thick,blue!70!white] (v) -- (4,0.5);      
      \fill (v) circle (0.09);
      \draw[fill=white] (pv) circle (0.09);
      \node[right] at (4.05,-0.5) {$v$};

      \coordinate(LUB) at (4,1);
      \fill (LUB) circle (0.09);
      \draw[fill=orange] (LUB) circle (0.09);
      \node[above,color=orange] at (3.4,1.1) {$\lub(u,v)$};

    \end{tikzpicture}
  \end{center}
  \caption{An illustration of the geometric idea behind \cref{Thm:ArrangementCharacterization}: For $u,v,\in \XiSp$ and a line $L'$ lying just above $\lub(u,v)$, $\push_{L'}(u)<\push_{L'}(v)$, whereas for a line $L''$ lying just below $\lub(u,v)$, $\push_{L''}(v)<\push_{L''}(u)$.  Thus any line passing through $\lub(u,v)$ is critical.}
  \label{fig:Criticality_Illustration}
\end{figure}
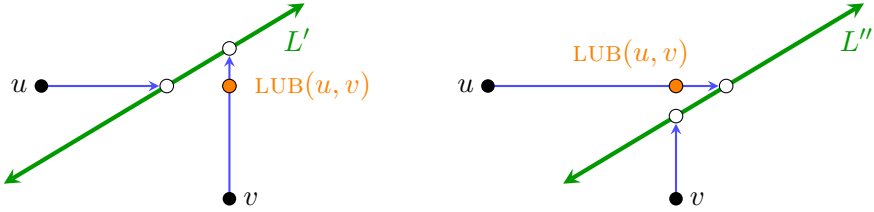

\begin{proof}
In view of the description of the 1-skeleton of $\cell(M)$ given by~\cref{Eq:Description_Of_One_Skel}, proving the proposition amounts to showing that $L\in \L^{\circ}$ is critical if and only if $L$ contains some anchor $w$.

Suppose $L\in \L^\circ$ contains an anchor $w$, and let $u,v\in \XiSp$ be weakly incomparable, with $w=\lub(u,v)\in L$.  Then we must have that $\push_L(u)=\push_L(v)=w$.  Further, it's easy to see that we can find an arbitrarily small perturbation of $L$ so that either $\push_L(u)<\push_L(v)$ or $\push_L(v)<\push_L(u)$.  Thus $L$ is critical; see \cref{fig:Criticality_Illustration} for an illustration in the case that $u$ and $v$ are incomparable.

To prove the converse, assume that $L\in \L^\circ$ does not contain any anchor, and consider distinct $u,v\in \XiSp$ with $\push_L(u)=\push_L(v)$.  Note that $u,v$ must lie either on the same horizontal line or the same vertical line; otherwise $u$ and $v$ would be incomparable and we would have $\push_L(u)=\push_L(v)=\lub(u,v)\in L$.  Assume without loss of generality that $u<v$ and $u,v$ lie on the same horizontal line $H$.  
Then we must also have that $\push_L(u)=\push_L(v)$ lies on $H$. 
Since $v=\lub(u,v)$, $v$ is an anchor. However, $L$ does not contain any anchor, so we must have $v<\push_L(v)$.  

Any sufficiently small perturbation $L'$ of $L$ will also intersect $H$ at a point $p$ to the right of $v$, so that we have $\push_{L'}(u)=\push_{L'}(v)=p$.  Thus for all $L'$ in a neighborhood of $L$, $\push(u)=\push(v)$.  

In fact, since $\XiSp$ is finite, we can choose a single neighborhood $\mathcal{N}$ of $L$ in $\L^\circ$ such that for any $u,v\in \XiSp$ with $\push_L(u)=\push_L(v)$, we have $\push_{L'}(u)=\push_{L'}(v)$ for all $L'\in \mathcal{N}$.  By \cref{Lem:ContinuityOfPush}, choosing $\mathcal{N}$ to be smaller if necessary, may further assume that if $\push_L(u)\ne \push_L(v)$, then $\push_{L'}(u)\ne \push_{L'}(v)$ for all $L'\in \mathcal{N}$.  Thus, the partition $\XiSp^{L'}$ is independent of the choice of $L'\in \mathcal{N}$.  Moreover, by \cref{Lem:ContinuityOfPush} again, the total order on $\XiSp^{L'}$ is also independent of the choice of $L' \in\mathcal{N}$.  Therefore, $L$ is regular. 
\end{proof}

\begin{corollary}\label{Cor:2Cells}
If the duals of $L, L'\in \L$ are contained in the same 2-cell in $\cell(M)$, then $\XiSp^L=\XiSp^{L'}$.
\end{corollary}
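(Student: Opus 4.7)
\medskip

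\noindent\textbf{Proof plan.} The plan is to reduce the statement to a standard locally-constant-on-a-connected-set argument, using the characterization of the $1$-skeleton from \cref{Thm:ArrangementCharacterization}. Let $e$ denote the $2$-cell containing both $\dual_\ell(L)$ and $\dual_\ell(L')$.

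First, I would observe that since the vertical line $\{0\}\times \R$ lies in the $1$-skeleton of $\cell(M)$, the $2$-cell $e$ is contained in $(0,\infty)\times \R$. In particular, $L$ and $L'$ have positive, finite slope, so $L,L'\in \L^{\circ}$, and in fact $\dual_p$ restricts to a homeomorphism between $(0,\infty)\times \R$ and $\L^\circ$. Since $e$ is an open, connected subset of $\R^2$, it is path-connected, so we may choose a continuous path $\gamma:[0,1]\to e$ with $\gamma(0)=\dual_\ell(L)$ and $\gamma(1)=\dual_\ell(L')$. Composing with $\dual_p$ gives a continuous path $t\mapsto L_t := \dual_p(\gamma(t))$ in $\L^\circ$ joining $L$ to $L'$.

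Next, I would use \cref{Thm:ArrangementCharacterization} to conclude that each $L_t$ is a regular line. Indeed, $\gamma(t)$ lies in the open $2$-cell $e$, hence is disjoint from the $1$-skeleton $\dual_\ell(\crit(M)) \cup (\{0\}\times \R)$; by \cref{Lem:description_of_lcm_lines} (incidence preservation) this means $L_t \notin \crit(M)$, i.e.\ $L_t$ is regular. By the definition of regularity, for each $t\in[0,1]$ there is an open ball $B_t\subseteq \L^\circ$ containing $L_t$ on which the ordered partition $\XiSp^{(\cdot)}$ is constant. Pulling these back through the continuous path, we obtain an open cover of $[0,1]$ on each element of which the assignment $t\mapsto \XiSp^{L_t}$ is constant.

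Finally, a locally constant function on the connected space $[0,1]$ is constant, yielding $\XiSp^L=\XiSp^{L_0}=\XiSp^{L_1}=\XiSp^{L'}$. The argument has no real obstacle: the only subtle point is confirming that $e$ meets only the open half-plane $(0,\infty)\times \R$, so that the path lifts into $\L^\circ$ where the notion of critical/regular is defined; once that is in place, \cref{Thm:ArrangementCharacterization} supplies precisely the statement that regularity is equivalent to avoiding the dualized $1$-skeleton, and connectedness finishes the job.
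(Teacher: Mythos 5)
Your proof is correct and is essentially the paper's own argument, just spelled out in full detail: the paper's proof is the one-liner ``Each 2-cell is connected and open, so this follows from \cref{Thm:ArrangementCharacterization},'' which is precisely the locally-constant-on-a-connected-set argument you carry out. The preliminary observation that the 2-cell lies in $(0,\infty)\times\R$ (so that the lifted lines lie in $\L^\circ$, where regularity is defined) is the right detail to check, and you handle it correctly.
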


\begin{proof}
Each 2-cell is connected and open, so this follows from \cref{Thm:ArrangementCharacterization}.
\end{proof}

\begin{remark}
In fact, \cref{Cor:2Cells} can be strengthened to show that for any $L,L'\in L^\circ$, the duals of $L$ and $L'$ lie in the same cell of $\cell(M)$ if and only if $\XiSp^L=\XiSp^{L'}$.  However, we will not need this stronger result.
\end{remark}

\subsection{The Barcode Templates \texorpdfstring{$\P^e$}{}}\label{Sec:Discrete_Barcodes}
Using \cref{Cor:2Cells}, we now define the barcode templates $\P^e$ stored at each 2-cell $e$ of $\cell(M)$.  This will  complete the definition of the augmented arrangement $\S(M)$.

By \cref{Cor:2Cells}, we can associate to each 2-cell $e$ in $\cell(M)$ a totally ordered partition $\XiSp^e$  of $\XiSp$.  
Let $\XiSp^e_i$ denote the $i^\mathrm{th}$ element of the partition.  
\nomenclature[Se]{$\XiSp^e$}{totally ordered partition of $\XiSp$}

\paragraph{A 1-D Persistence Module at the 2-Cell $e$}
We next use $\XiSp^e$ to define a discrete (i.e., $\Z$-indexed) persistence module $M^e$.  If $M\cong 0$, we take $M^e=0$.  Assume then that $M\not\cong 0$.  First, we define a map $\indx^e:\Nbb\setminus\{0\}\to \R^2$ by \nomenclature[z]{$\indx^e$}{map from positive integers to template points at cell $e$}
\[\indx^e(z)=
\begin{cases}\lub(\cup_{i=1}^z \XiSp^e_i) &\textup{if }1\leq z< |\XiSp^e|,\\
\lub(\XiSp) &\textup{if }z\geq  |\XiSp^e|.
\end{cases}
\]

We define $\PCal^e:=\im \indx^e$, and call this the set of \emph{template points} at cell $e$.  Note that the restriction of $\indx^e$ to $\{1,\ldots,|S^e|\}$ is an injection, so that $|\PCal^e|=|S^e|$.  
\nomenclature[Pe]{$\PCal^e$}{set of template points at cell $e$}

Let $\PCat$ denote the poset category of positive integers.  $\indx^e(y)\leq \indx^e(z)$ whenever $y< z$, so $\indx^e$ induces a functor $\PCat$ to $\RCat^2$, which we also denote $\indx^e$.  Finally, the functor $M\circ \indx^e:\PCat \to \kvect$ extends to a functor $M^e:\ZCat\to \kvect$ (i.e., a $\Z$-indexed persistence module) by taking $M^e_z=0$ whenever $z\leq 0$.

\paragraph{The Definition of Barcode Templates}
Clearly, $M^e$ is \pfd, so it has a well-defined barcode $\B{M^e}$, and it's easy to see that $\B{M^e}$ consists of intervals $[a,b)$, with $a<b\in \{1,\ldots,|\PCal^e|, \infty\}$.  Let us write  $\indx(\infty)=\infty$.

We define $\P^e$ to be a collection of pairs of points in $\PCal^e\times (\PCal^e\cup \infty)$, as follows:
\[\P^e=\{(\indx^e(a),\indx^e(b))\mid [a,b)\in \B{M^e}\}.\]
This completes the definition of $\S(M)$.  

\begin{remark}
$\S(M)$ is completely determined by the fibered barcode $\B{M}$ and the set $\XiSp$.  Indeed, $\cell(M)$ is completely determined by $\XiSp$, and using \cref{Rank_Vs_Fibered_Barcode}, it's easy to see that $\P^e$ is completely determined by $\B{M}$ and $\XiSp$.  The main result of the next section, \cref{Thm:QueriesMain}, shows that conversely, $\S(M)$ completely determines $\B{M}$, and does so in a simple way.
\end{remark}
\section{Querying the Augmented Arrangement}\label{Sec:QueryingMath}
In the previous section, we defined the augmented arrangement $\S(M)$ of a finitely presented 2-D persistence module $M$.  We now explain how $\S(M)$ encodes the fibered barcode $\B{M}$.  That is, for a given $L\in \bar \L$, we explain how to recover $\B{M^L}$ from $\S(M)$.  The main result of this section, \cref{Thm:QueriesMain}, is the basis of our algorithm for querying $\S(M)$.  We describe the computational details of our query algorithm in \cref{Sec:Representing_And_Querying_Aug_Arrangement}. 

Recall that the procedure for querying $\S(M)$ for a barcode $\B{M^L}$ was discussed for the case of generic lines $L$  in \cref{Sec:Computational_Underpinnings}.  More generally, for any $L\in \bar \L$, querying $\S(M)$ for the barcode $\B{M^L}$ involves two steps.  First, we choose a 2-cell $e$ in $\cell(M)$; if $L\in \L$, then $e$ is a 2-dimensional coface of the cell in containing $\dual_\ell(L)$.  Second, we obtain the intervals of $\B{M^L}$ from the pairs of $\P^e$ by pushing the points in each pair $(a,b)\in \P^e$ onto the line $L$, via the map $\push_L$ of \cref{Sec:CriticalLines}.  

We now describe in more detail the first step of selecting the 2-D coface $e$.

\subsection{Selecting a 2-D Coface \texorpdfstring{$e$ of $L$}{}}\label{Sec:Selecting_A_Coface}
For $L\in \bar \L$, we choose the 2-D coface $e$ of $L$ as follows:
\begin{itemize}
\item If $L\in \L^\circ$, then there exists a 2-cell of $\cell(M)$ whose closure contains $\dual_\ell(L)$; we take $e$ to be any such 2-cell.  
\item If $L$ is horizontal, then the 2-D cofaces of the cell containing $\dual_\ell(L)$ are ordered vertically in $[0,\infty)\times \R$; we take $e$ to be the \emph{bottom} coface.  (Note that $\dual_\ell(L)$ has only one 2-D coface unless $L$ contains an anchor.)
\item For $L$ vertical, say $L$ is the line $x=a$, let $L_0$ be the line in the arrangement $\cell(M)$ of maximum slope, amongst those having slope less than or equal to $a$, if a unique such line exists; if there are several such lines, take $L_0$ to be the one with the largest $y$-intercept.  If such $L_0$ exists, it contains a unique unbounded 1-cell in $a$. We take $e$ to be the 2-cell lying directly above this 1-cell.  If such a line $L_0$ does not exist, then we take $e$ to be the bottom unbounded 2-cell of $\cell(M)$; since we assume all $x$-coordinates of $\XiSp$ to be non-negative, this cell is uniquely defined.
\end{itemize}
The selection of cofaces for several lines is illustrated in \cref{fig:coface}.

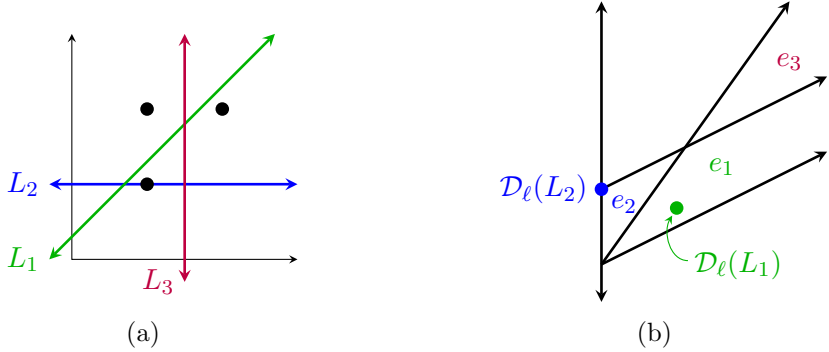
\begin{figure}[ht]
  \begin{center}
    \subcaptionbox{}{
    \begin{tikzpicture}
        \draw[<->] (0,3) -- (0,0) -- (3,0);
        \draw[<->,line width=1pt,color=blue] (-0.3,1) -- (3,1);
        \node[left,color=blue] at (-0.3,1) {$L_2$};
        \draw[<->,line width=1pt,color=green!70!black] (-0.3,0) -- (2.7,3);
        \node[left,color=green!70!black] at (-0.3,0) {$L_1$};
        \draw[<->,line width=1pt,color=purple] (1.5,-0.3) -- (1.5,3);
        \node[below left,color=purple] at (1.5,0) {$L_3$};    
        \foreach \p in {(1,1),(1,2),(2,2)}
          { \fill \p circle (0.09); }
      \end{tikzpicture}
}
 \hspace{.8in}
           \subcaptionbox{}{
      \begin{tikzpicture}
        \draw[<->,line width=1pt] (0,-2.5) -- (0,1.5);
        
        \draw[->,line width=1pt] (0,-2) -- (3,-0.5);
        \draw[->,line width=1pt] (0,-2) -- (2.5,1.5);
        \draw[->,line width=1pt] (0,-1) -- (3,0.5);
        
        \fill[blue] (0,-1) circle (0.09);
        \node[left,color=blue] at (-0.05,-1) {$\dual_\ell(L_2)$};
        \node[color=blue] at (0.3,-1.2) {$e_2$};
        
        \fill[green!70!black] (1,-1.25) circle (0.09);
        \node[right,color=green!70!black] at (1.1,-2) {$\dual_\ell(L_1)$};
        \node[color=green!70!black] at (1.6,-0.7) {$e_1$};
        \draw[->,color=green!70!black] (1.15,-1.95) to [out=180,in=240] (0.94,-1.35);
        
        \node[color=purple] at (2.5,0.7) {$e_3$};
    \end{tikzpicture}
         }
  \end{center}
   \caption{Three anchors are drawn as black dots in (a); the corresponding line arrangement $\cell(M)$ is shown in (b).  For each line $L_i$ in (a), the dual point $\dual_\ell(L_i)$ and the corresponding 2-cell $e_i$, chosen as in \cref{Sec:Selecting_A_Coface}, are shown in (b) in the same color.}
  \label{fig:coface}
\end{figure}

\subsection{The Query Theorem}\label{Sec:Query_Theorem}
Here is the main mathematical result underlying RIVET:
\begin{theorem}[Querying the Augmented Arrangement]\label{Thm:QueriesMain}
For any line $L\in \bar \L$ and $e$ a 2-cell chosen as in \cref{Sec:Selecting_A_Coface}, the barcode obtained by restricting $M$ to $L$ is:
\[\B{M^L}=\{\,[\push_L(a),\,\push_L(b)) \mid (a,b)\in \P^e,\ \push_L(a)<\push_L(b)\,\}.\]
\end{theorem}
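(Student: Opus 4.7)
The plan is to exhibit $M^L$ as a continuous extension of the discrete persistence module $M^e$ along an appropriate grid function $\G\colon \Z \to L$, and then to apply \cref{BarcodesContinuousExtensions} as adapted to the $L$-indexed setting in \cref{Rmk:Extensions_On_L}.

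I would define $\G$ by $\G(z) = \push_L(\indx^e(z))$ for $1 \leq z \leq |\XiSp^e|$, and extend $\G$ to all of $\Z$ so that $\G$ is non-decreasing with $\G(z) \to \pm\infty$ as $z \to \pm\infty$ (identifying $L$ with $\R$ via an order-preserving parameterization). A key preliminary observation is that $\push_L$ commutes with least upper bounds in the sense that
\[
\push_L(\lub(W)) = \max\{\push_L(w) \mid w \in W\}
\]
for any finite $W \subset \R^2$, since both sides equal the minimum point of $L$ lying above every element of $W$. Writing $q_j := \push_L(u)$ for any $u \in \XiSp^e_j$ (well-defined for $L$ interior to $e$, and well-defined on the boundary of $e$ by taking a limit from the interior via \cref{Lem:ContinuityOfPush}), this identity yields $\G(z) = q_z$, and the sequence $(q_j)$ is non-decreasing.

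The heart of the proof is the isomorphism $\CoEx_\G(M^e) \cong M^L$ of functors $\LCat \to \kvect$. By definition $\CoEx_\G(M^e)_t = M_{\indx^e(y)}$, where $y = \max\{w \in \Z \mid \G(w) = \fl_\G(t)\}$. I would invoke \cref{Lem:GradesOfInfluenceAndIsomorphisms} to conclude that $M(\indx^e(y), t)$ is an isomorphism, by verifying $\I_M(\indx^e(y)) = \I_M(t)$. For $z \in \XiSp$ and $t \in L$, one has $z \leq t$ in $\R^2$ if and only if $\push_L(z) \leq t$, so the maximality of $y$ gives $\I_M(t) = \bigcup_{j \leq y} \XiSp^e_j$. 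Similarly, $z \leq \indx^e(y)$ implies $\push_L(z) \leq \push_L(\indx^e(y)) = q_y$, which together with $q_{y+1} > q_y$ (forced by the maximality of $y$) rules out $z \in \XiSp^e_k$ for $k > y$, giving $\I_M(\indx^e(y)) = \bigcup_{j \leq y} \XiSp^e_j$ as well. Naturality in $t$ of the resulting isomorphism is immediate from functoriality of $M$. With the isomorphism in hand, \cref{BarcodesContinuousExtensions} yields
\[
\B{M^L} = \{[\G(a), \G(b)) \mid [a,b) \in \B{M^e},\ \G(a) < \G(b)\},
\]
and substituting $\G = \push_L \circ \indx^e$ and using the definition $\P^e = \{(\indx^e(a), \indx^e(b)) \mid [a,b) \in \B{M^e}\}$ gives the claimed formula.

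The main obstacle is handling non-generic lines: those $L \in \L^\circ$ with $\dual_\ell(L)$ on a 1-cell of $\cell(M)$, and horizontal or vertical lines. In these cases, several consecutive blocks of $\XiSp^e$ may collapse to a common $q_j$ on $L$, and for horizontal or vertical $L$ some $q_j$ may equal $\infty$. That the merging blocks are necessarily \emph{consecutive} in the order on $\XiSp^e$ is a consequence of \cref{Thm:ArrangementCharacterization} together with \cref{Lem:ContinuityOfPush}, via a continuity argument of the type used in the proof of \cref{Thm:ArrangementCharacterization}: a block strictly between two merged blocks is forced to merge with them. For the $\infty$-ended cases, the conventions $\indx^e(\infty) = \infty$ and $\push_L(\infty) = \infty$ allow the argument above to go through verbatim, and the coface-selection rules of \cref{Sec:Selecting_A_Coface} are calibrated so that $\XiSp^e$ correctly encodes the push-map structure on $L$ in the limit.
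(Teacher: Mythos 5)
Your proposal is correct and takes essentially the same route as the paper's proof: you define the 1-D grid $\G=\push_L\comp\indx^e$, exhibit $M^L$ as a continuous extension of $M^e$ by verifying $\I_M(\indx^e(y))=\I_M(t)$ and invoking \cref{Lem:GradesOfInfluenceAndIsomorphisms}, and conclude via \cref{BarcodesContinuousExtensions} in the $L$-indexed setting of \cref{Rmk:Extensions_On_L}, deferring the horizontal/vertical cases much as the paper does. The only cosmetic difference is bookkeeping: the paper splits the pointwise isomorphism check into the three explicit cases $z<1$, $1\leq z<|\PCal^e|$, and $z\geq|\PCal^e|$ (treating the zero-module range $z<1$ separately), whereas you argue uniformly through the push values $q_j$ and their monotonicity.
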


Note that if $L$ is such that $\dual_\ell(L)$ lies in a 2-cell of $\cell(M)$, then $\push_L(a)<\push_L(b)$ for all $(a,b)\in \P^e,$ so the theorem statement simplifies for such $L$.  In general, however, it is possible to have $\push_L(a)=\push_L(b)$.   

\begin{proof}[Proof of \cref{Thm:QueriesMain}]
We prove the result for the case $L\in \L^\circ$; the proof for $L$ horizontal or vertical is similar, but easier, and is left to the reader.   The result holds trivially if $M\cong 0$.  Assume then that $M\not\cong 0$.  

Let $e$ be a 2-D coface of the cell containing $\dual_p(L)$.  To keep notation simple, we will write $\push=\push^L$ and $\indx=\indx^e$.

Keeping \cref{Rmk:Extensions_On_L} in mind, we define a 1-D grid $\G:\Z\to L$.  To do so, we first define the restriction of $\G$ to $\{1,\ldots,|\PCal^e|\}$ by taking \[\G(z)=\push\comp \indx(z);\] note that this is non-decreasing. We choose an arbitrary extension of this  to a 1-D grid $\G:\Z\to L$.

By \cref{BarcodesContinuousExtensions}, to finish the proof it suffices to show that $M^L$ is a continuous extension of $M^e$ along $\G$; i.e., that there exists an isomorphism $\alpha:\CoEx_\G(M^e)\to M^L$.

Given $t\in L$, let \[z =\max\, \{w\in \Z\mid \G(w)=\fl_\G(t)\}.\]  Note that we have \[\CoEx_\G(M^e)_t=M^e_z=\begin{cases} M_{\indx(z)}&\textup{if }z\geq 1,\\ 0&\textup{if }z< 1.\end{cases}\]  Note also that 
\begin{equation}\label{eq:gamma_and_G}
\G(z)\leq t<\G(z+1).
\end{equation}
To define the maps $\alpha_t:\CoEx_\G(M^e)_t\to M^L_t$, we will consider separately the three cases 
\[z<1,\qquad 1\leq z< |\PCal^e|,\qquad \text{and} \qquad z\geq |\PCal^e|.\]

For $a\in \R^n$, recall the definition of $\I_M(a)$ from immediately above \cref{Lem:GradesOfInfluenceAndIsomorphisms}, and note that  $\G(1)=\push\comp \indx(1)$ is the minimal element $a$ of $L$ (with respect to the partial order on $\R^2$) such that $\I_M(a)\ne \emptyset$. Hence by \cref{eq:gamma_and_G}, if $z<1$ then $\I_M(t)=\emptyset$, so $M^L_{t}=0$.  Further, if $z<1$, then $\CoEx_\G(M^e)_t=0$.  Thus for $z<1$, we (necessarily) take the isomorphism $\alpha_t:\CoEx_\G(M^e)_t\to M^L_t$ to be the zero map.

For $1\leq z< |\PCal^e|$, \cref{eq:gamma_and_G} and the definition of $\G$ give that 
\[\push\comp \indx(z)\leq t<\push\comp \indx(z+1).\]
This implies \[\indx(z)\leq t\not\geq \indx(z+1),\] so $\I_M(\indx(z))=\I_M(t)$.  Thus $M(\indx(z),t)$ is an isomorphism by \cref{Lem:GradesOfInfluenceAndIsomorphisms}.  Since $\CoEx_\G(M^e)_t=M_{\indx(z)}$, we may regard the map $M(\indx(z),t)$ as an isomorphism $\alpha_t:\CoEx_\G(M^e)_t\to  M_t=M^L_t$.  

For $z\geq |\PCal^e|$, we have that $\indx(z)=\lub(\XiSp)$, so $\I_M(\indx(z))=\XiSp$.  Further, the chain of inequalities \[\indx(z)=\lub(\XiSp) \leq \push\comp \lub(\XiSp)=\G(|\PCal^e|)  \leq \G(z)\leq t\] gives that $\indx(z)\leq t$, and further that $\XiSp=\I_M(\indx(z))\subseteq \I_M(t)\subseteq \XiSp$, so in fact \[\I_M(\indx(z))=\I_M(t)=\XiSp.\]  Then by \cref{Lem:GradesOfInfluenceAndIsomorphisms} again, $M(\indx(z),t)$ is an isomorphism, which as above, can be interpreted as an isomorphism $\alpha_t:\CoEx_\G(M^e)_t\to M^L_t$.  

We have now defined isomorphisms $\alpha_t:\CoEx_\G(M^e)_t\to M^L_t$ for all $t\in L$.  Clearly, these isomorphisms commute with internal maps in $\CoEx_\G(M^e)$ and $M^L$, so they define an isomorphism $\alpha:\CoEx_\G(M^e)\to M^L$, as desired.
\end{proof}

\subsection{Computational Details of Queries}\label{Sec:Queries}
\label{Sec:Representing_And_Querying_Aug_Arrangement}\label{Sec:Data_Structure}

We next explain the computational details of storing and querying $\S(M)$.  We also give the complexity analysis of our query algorithm, proving \cref{Thm:Query_Cost}.  

\paragraph{DCEL Representation of $\cell(M)$}
As noted in the introduction, we represent the line arrangement $\cell(M)$ that underlies $\S(M)$ using the DCEL data structure \cite{deberg2008computation}, a standard data structure for representing line arrangements in computational geometry.  The DCEL consists of a collection of vertices, edges, and 2-cells, together with a collection of pointers specifying how the cells fit together to form a decomposition of $[0,\infty)\times \R$.

\paragraph{Representing the Barcode Templates}
To represent the augmented arrangement $\S(M)$, we store the barcode template $\P^e$ at each 2-cell $e$ in the DCEL representation of $\cell(M)$.  Recall that $\P^e$ is a multiset; this means a pair $(a,b)$ may appear in $\P^e$ multiple times.  We thus store $\P^e$ as a list of triples $(a,b,k)$, where $k\in \Nbb$ gives the multiplicity of $(a,b)$ in $\P^e$.

\paragraph{Our Query Algorithm}
Given a line $L\in \bar\L$, the query of $\S(M)$ for $\B{M^L}$ proceeds in two steps.  The first step performs a search for the 2-cell $e$ of $\cell(M)$ specified in \cref{Sec:Selecting_A_Coface}.  
Once the 2-cell $e$ is selected, we obtain $\B{M^L}$ from $\P^e$ by applying $\push_L$ to the endpoints of each pair $(a,b) \in \P^e$.  

Let us describe our algorithm to find the 2-cell $e$ in detail.  In the case that $L\in \L$ (i.e., $L$ is not vertical), it suffices to find the cell of $\cell(M)$ containing $\dual_\ell(L)$.  In general, the problem of finding the cell in a line arrangement containing a given query point is known as the \emph{point location problem}; this is a very well studied problem in computational geometry.  When we need to perform many point location queries on an arrangement, or when we need to perform the queries in real time, it is standard practice to precompute a data structure on the which point locations can be performed very efficiently \cite{toth2004handbook}; this is the approach we take.  For $v$ the number of vertices in the arrangement, there are a number of different strategies which, in time $O(v\log v)$, compute a data structure of size $O(v)$ on which we can perform a point location query in time $O(\log v)$.  $\cell(M)$ has $O(\kappa^2)$ vertices, so computing such a data structure for $\cell(M)$ takes time $O(\kappa^2 \log \kappa)$.  The data structure is of size $O(\kappa^2)$, and the point location query takes time $O(\log \kappa)$.  

In the case that $L$ is vertical, $\dual_\ell(L)$ is not defined, and we need to take a different approach to find the 2-cell $e$.  We precompute a separate (simpler) search data structure to handle this case: Let $\Y$ denote the set of lines $J$ in $\cell(M)$ such that there is no other line in $\cell(M)$ with the same slope lying above $J$.  We compute a 1-D array which contains, for each $J\in \Y$, a pointer to the rightmost (unbounded) 1-cell of $\cell(M)$ contained in $J$, sorted according to slope.  Given $\cell(M)$, computing this array takes $O(n_l)$ time, where $n_l=O(\kappa)$ is the number of anchor lines.  Once the array has been computed, for any vertical line $L$, we can find the appropriate 2-cell $e$ via a binary search over the array.  This takes $\log n_l$ time.

We are now ready to prove our result from the introduction on the cost of querying $\S(M)$.

\begin{proof}[Proof of \cref{Thm:Query_Cost}]
From the discussion above, it is clear that  once we have precomputed the appropriate data structures, finding the cell $e$ takes $O(\log \kappa)$ time.  Each evaluation of $\push_L$ takes constant time, so computing $\B{M^L}$ from $\P^e$ takes total time $O(|\P^e|)$.  Thus, the total time to query $\S(M)$ for $\B{M^L}$ is $O(|\P^e|+\log \kappa)$.  If $\dual_\ell(L)\in e$, then $|\P^e|=|\B{M^L}|$; this gives \cref{Thm:Query_Cost}~(i).  If, on the other hand, $\dual_\ell(L)\not\in e$, then we may not have $|\P^e|=|\B{M^L}|$, but we do have $|\P^e|=|\B{M^{L'}}|$, for $L'$ an arbitrarily small perturbation of $L$ with $\dual_\ell(L')\in e$.  \cref{Thm:Query_Cost}~(ii) follows.
\end{proof}
\section{Computing the Arrangement \texorpdfstring{$\cell(M)$}{A(M)}}\label{Sec:Computing_Line_Arrangement}

We now turn to the specification of our algorithm for computing $\cell(M)$.  First, in \cref{Sec:FI_Reps,Sec:Motivativing_Free_Implicit_Reps}, we specify the algebraic objects which serve as the input to our algorithm, and explain how these objects arise from bifiltrations.  

\subsection{Free Implicit Representations of Persistence Modules: The Input to our Algorithm}\label{Sec:FI_Reps}
For $k\geq 1$, define $[k]$ to be the set of integers $\{1,\ldots,k\}$, and let $[0]$ denote the empty set.\nomenclature[K]{$[k]$}{integers $\{1,\ldots,k\}$}  Define a \emph{free implicit representation (\fir)} of an $n$-D persistence module $M$ to be a 
quadruple $\Phi = (\gr_1,\gr_2,D_1,D_2)$, such that:
\nomenclature[FI]{\fir}{free implicit representation}
\nomenclature[Phi]{$\Phi$}{a free implicit representation}
\nomenclature[m]{$m_j$}{dimension of $\Phi$}
\begin{itemize}
\item For $i=1,2$, $\gr_i:[m_i]\to \R^n$ is a function, for some $m_i\geq 0$.
\item $D_1$ and $D_2$ are matrices with coefficients in $K$, of respective dimensions $m_0\times m_1$ and  $m_1\times m_2$, for some $m_0\geq 0$.  Note that either $\MatLeft$ or $\MatRight$ may be an empty matrix if $m_i=0$.
\item If $m_1=0$, then $M\cong 0$.  
\item If $m_1>0$, let $\gr_0:[m_0]\to \R^n$ denote the constant function mapping to the greatest lower bound of $\im \gr_1$.  Then, defining ordered $n$-graded sets $\W_i:=([m_i],\gr_i)$ for $i=0,1,2$, we require that (in the notation of \cref{Sec:Free_Modules_Presentations}), $D_1=[\partial_1]$ and $D_2=[\partial_2]$ are the matrix representations, respectively, of maps \[\partial_1:\free(\W_1)\to \free(\W_0),\quad \partial_2:\free(\W_2)\to \free(\W_1)\] such that $\partial_1\circ \partial_2=0$ and $M\cong \ker \partial_1/\im \partial_2$.
\end{itemize}
We refer to $(m_0,m_1,m_2)$, defined above, as the \emph{dimensions} of $\Phi$, and write $M\cong H\Phi$.  
Note that a presentation of $M$ is an \fir of $M$ in the degenerate case that $m_0=0$, so that $D_1$ is an empty matrix. 

Our algorithm for computing the augmented arrangement of a finitely presented 2-D persistence module $M$ takes as input an \fir of $M$. 

\paragraph{Storing a Free Implicit Representation in Memory}
We store the matrices $\MatLeft$ and $\MatRight$ in a column-sparse data structure, as used in the standard persistent homology algorithm \cite{zomorodian2005computing}.  Columns of $D_j$ are stored in an array of size $m_j$; the $i^{\mathrm{th}}$ column is stored as a list in position $i$ of the array.  We also store $\gr_j(i)$ at position $i$ of the same array.  

\subsection{Motivation: Free Implicit Representations From Finite Filtrations}\label{Sec:Motivativing_Free_Implicit_Reps}
We are interested in studying the $i^{\rm{th}}$ persistent homology module $H_i\F$ of a finite bifiltration $\F$ arising from data.  Our choice to represent a 2-D persistence module via an \fir is motivated by the fact that in practice, one typically has ready access to an \fir of $H_i\F$.  In contrast, we generally do not have direct access to a presentation of $H_i\F$ at the outset, and while there are known algorithms for computing one \cite{kreuzer2005computational}, they are computationally expensive \cite{carlsson2009computing}.

We'll now describe in more detail how \firs of persistent homology modules arise in practice.  

\paragraph{The Chain Complexes of Multi-filtrations}
Recall from \cref{Sec:Multi_D_Persistent_Homology} that an $n$-D filtration is a functor $\F:\RCat^n\to \Simp$ such that the map $\F_a\to \F_b$ is an inclusion whenever $a\leq b\in \R^n$.  $\F$ gives rise to a chain complex $C\F$ of $n$-D persistence modules, given by 
\[\cdots\xrightarrow{\partial_{j+2}} C_{j+1}\F \xrightarrow{\partial_{j+1}} C_j\F \xrightarrow{\partial_{j}} C_{j-1}\F\xrightarrow{\partial_{j-1}}\cdots,\]
where
\begin{itemize}
\item we define $C_j\F$ by taking the vector space $(C_{j}\F)_a$ to be generated by the $j$-simplices of $\F_a$, and taking the map $C_j\F(a,b)$ to be induced by the inclusion $\F_a\hookrightarrow \F_b$,
\item the morphism $\partial_j$ is induced by the $j^{\mathrm{th}}$ boundary maps of the simplicial complexes $\F_a$.
\end{itemize}
Note that $H_j \F\cong\ker \partial_j/\im \partial_{j+1}$.  

\paragraph{1-Critical and Multi-critical Filtrations} 
To explain how \firs arise from finite bifiltrations, it is helpful to first consider a special case: Following \cite{carlsson2009computing}, we define a \emph{1-critical filtration} $\F$ to be a finite $n$-D filtration where for each $s\in \F_{\max}$, $|A(s)|=1$; here, as in \cref{Sec:Multi_D_Persistent_Homology}, $A(s)$ denotes the set of grades of appearance of $s$.  If $\F$ is finite and not 1-critical, we say $\F$ is \emph{multi-critical}.  

Bifiltrations arising in TDA applications are often (but not always) 1-critical.  For example, for $P$ a finite metric space and $\gamma:P\to \R$ a function, the bifiltration $\Rips(\gamma)$ of \cref{Sec:Intro_Multi_D_PH} is 1-critical, but the filtration $\BRips(P)$ of that section generally is not 1-critical.  

It's easy to see that if $\F$ is 1-critical, then each $C_j\F$ is free; we have an obvious isomorphism between $C_j\F$ and $\free[\F_j]$, for $\F_j$ the $n$-graded set given by
\[\F_j\equiv \{(s,A(s))\mid s\textup { a }j\textup{-simplex in }\F_{\max}\}.\]  
Thus, choosing an order for each $\F_j$, the boundary map $\partial_j:C_j\F\to C_{j-1}\F$ can be represented with respect to $\F_j$, $\F_{j-1}$ by a matrix $[\partial_j]$ with coefficients in the field $K$, as explained in \cref{Sec:Free_Modules_Presentations}; $[\partial_j]$ is exactly the usual matrix representation of the $j^{\mathrm{th}}$ boundary map of $\F_{\max}$.

\paragraph{Free Implicit Representations of $H_i\F$ in the 1-Critical Case}
The ordered $n$-graded sets $\F_j$ and matrices $[\partial_j]$ determine the chain complex $C\F$, and hence each of the homology modules $H_j\F$ up to isomorphism.  In fact, for $\O_j:\F_j\to \{1,2,\ldots,|\F_j|\}$ the total order on $\F_j$, we have that \[(\gr_{\F_{j}}\comp \O_j^{-1},\gr_{\F_{j+1}}\comp \O_{j+1}^{-1},[\partial_j],[\partial_{j+1}],)\] is an \fir of $H_j\F$, for any $j\geq 0$.

\paragraph{Free Implicit Representations of $H_i\F$ in the Multi-Critical case}
For $\F$ a multi-critical filtration, the modules $C_j\F$ are not free.  Nevertheless, as explained in \cite{chacholski2012combinatorial}, there is an easy construction of an \fir
\[\Phi=(\gr_1,\gr_2,D_1,D_2)\] of $H_j \F$, generalizing the construction given above in the 1-critical setting.
Letting \[l_j=\sum_{\substack{s\text{ a $j$-simplex}\\ \text{in } \F_{\max}}} |A(s)|,\] 
the dimensions $(m_0,m_1,m_2)$ of $\Phi$ satisfy the following bounds when $\F$ is a bifiltration:
\begin{equation}\label{Eq:Multicritical_Bounds}
m_0 \leq l_{j-1}, \qquad m_1 \leq l_j,\qquad   m_2\leq l_{j+1}+l_j-1;
\end{equation}
see \cite{chacholski2012combinatorial} for details.

\paragraph{Computation of the Free Implicit Representation of a Bifiltration}
As explained in \cref{Sec:Multi_D_Persistent_Homology}, we can store a finite simplicial bifiltration in memory as a simplicial complex, together with a list of grades of appearance for every simplex.  Let $\F$ be a finite (one-critical or multi-critical) bifiltration of size $l$.  It's not hard to show that given this input, for any $i\geq 0$ we can compute the \fir of $H_i\F$ described above in time $O(l \log l)$.

\originalparagraph{One Homology Index at a Time, or All Homology Indices at Once?}
The standard algorithms for computing persistence barcodes of a 1-D filtration $\F$ compute $\B{H_i\F}$ for all $i$ up to a specified integer, in a single pass.  When one is interested in the barcodes at each homology index, this is more efficient than doing the computations one index at a time, because the computations of $\B{H_i\F}$ and $\B{H_{i+1}\F}$ share some computational work.

In contrast, the algorithm described in this paper, and implemented in the present version of RIVET, computes $\S(H_i\F)$ of a finite bifiltration $\F$, for a single choice of $i\geq 0$.  This approach allows us to save computational effort when we are only interested in a single homology module, and seems to be the more natural approach when working with multi-critical filtrations.

That said, within the RIVET framework, for a 1-critical bifiltration $\F$ one can also handle all persistence modules $H_i\F$, for $i$ up to a specified integer, in a single pass.\footnote{The natural way to do this is not to compute $\S(H_i\F)$ for each $1\leq i\leq l$, but rather to compute the single augmented line arrangement $\S(\bigoplus_{i=1}^l H_i\F)$, labeling the intervals of the discrete barcode at each 2-cell of $\cell(\bigoplus_{i=1}^l H_i\F)$ by homology degree, so that a query of $\S(\bigoplus_{i=1}^l H_i\F)$ provides the homology degree of each interval of $\B{\bigoplus_{i=1}^l (H_i\F)^L}$.  This ``labeled" variant of $\S(\bigoplus_{i=1}^l H_i\F)$ can be computed using essentially the same algorithm as presented in this paper for computation of a single augmented arrangement.

When $\F$ is not 1-critical, to compute $\S(\bigoplus_{i=1}^l H_i\F)$, we need to first replace $C(\F)$ by a chain complex of free 2-D persistence modules.  As noted in \cite{carlsson2009computing}, this can be done via a mapping telescope construction, though this may significantly increase the size of $C(\F)$.}

\subsection{Computation of Betti Numbers}\label{sec:ComputingBettiNumbers}
Our first step in the computation of $\cell(M)$ is to compute $\XiSp=\supp\xi_0(M) \cup \supp \xi_1(M)$.  Since RIVET also visualizes the Betti numbers of $M$ directly, we  choose to compute this by fully computing $\xi_0$ and $\xi_1$.  (In any case, we do not know of any algorithm for computing $\XiSp$ that is significantly more efficient than our algorithm for fully computing $\xi_0$ and $\xi_1$.)

\paragraph{Computing Betti Numbers of $\Z^2$-indexed Persistence Modules}
We can define a \fir $\Phi=(\gr_1,\gr_2,\MatLeft,\MatRight)$ of a $\Z^2$-indexed persistence module $\DiscreteMod$ just as we have for an $\R^2$-indexed persistence module above; in this case the functions $\gr_1$ and $\gr_2$ take values in $\Z^2$.  

In a companion article \cite{lesnick2014betti}, we show how to fully compute $\xi_0(\DiscreteMod)$, $\xi_1(\DiscreteMod)$, and $\xi_2(\DiscreteMod)$, given $\Phi$; the algorithm runs in time runs in time $O(m^3)$, for $(m_0,m_1,m_2)$ the dimensions of $\Phi$ and $m=m_0+m_1+m_2$.

One way to compute the bigraded Betti numbers of $M$, quite standard in computational commutative algebra, is to compute a free resolution for $M$ \cite{la1998strategies}.  However, this gives us more than we need for our particular application.  Instead of following this route, our algorithm computes the Betti numbers via carefully scheduled column reductions on matrices, taking advantage of a well-known characterization of Betti numbers in terms of the homology of Kozul complexes \cite{eisenbud2005geometry}.  

\paragraph{Computing Betti Numbers of $\R^2$-indexed Persistence Modules}
In fact, our algorithm for computing Betti numbers in the discrete setting can also be used to compute the bigraded Betti numbers of a finitely presented $\R^2$-indexed persistence module.  Indeed, as we will now explain, a \fir  $(\gr_1,\gr_2,\MatLeft,\MatRight)$ of an $\R^2$-indexed persistence module $M$ induces a discrete \fir $\Phi=(\gr_1' ,\gr_2',\MatLeft,\MatRight)$ and an injective 2-D grid $\G:\Z^2\to \R^2$ such that $M$ is a continuous extension of $M$ along $H\Phi$.  (The matrices $D_1, D_2$ are the same in the two free implicit representations).  Given this, we can compute the multigraded Betti numbers of $H\Phi$ using the algorithm of \cite{lesnick2014betti} and deduce the multigraded Betti numbers of $M$ from those of $H\Phi$, by \cref{Prop:BettiNumsContinuousExtensions}.

The construction of $\Phi$ and the 2-D grid $\G$ are simple: Let $\orderedCoords_x$ (respectively, $\orderedCoords_y$) denote the ordered set of unique $x$-coordinates ($y$-coordinates) of elements of $\im \gr_1 \cup \im \gr_2$.  Let $n_x=|\orderedCoords_x|$ and $n_y=|\orderedCoords_y|$.  Let $\ordfun_x:[n_x]\to \orderedCoords_x$ be the bijection sending $i$ to the $i^{\mathrm{th}}$ element of $\orderedCoords_x$; define $\ordfun_y:[n_y] \to \orderedCoords_y$ analogously.  We choose $\G: \Z^2\to \R^2$ to be an arbitrary extension of \[\ordfun_x\times \ordfun_y: [n_x]\times [n_y] \to \orderedCoords_x\times \orderedCoords_y.\] 
For $j=1,2$, we define $\gr_j'$ to be the $\Z^2$-valued function $(\ordfun_x\times \ordfun_y)^{-1} \circ \gr_j$.  
 
We leave to the reader the easy check that $M$ is a continuous extension of $H\Phi$ along $\G$.

\subsection{Computation and Storage of Anchors and Template Points}\label{Sec:Sparse_Data_Structure_For_T}
Recall from \cref{sec:arr_def} that an \emph{anchor} is the least upper bound of a weakly incomparable pair of points in $\XiSp$, and that the set of anchors determines the line arrangement $\cell(M)$.  In this section, we will let $A$ denote the set of anchors.  
To compute the line arrangement $\cell(M)$, we need to first compute a list $\anchors$
of all elements of $A$.

Moreover, our algorithm for computing the barcode templates, described in \cref{Sec:Computing_Additional_Data_at_Faces}, requires us to represent the set
\[\PCal:=\bigcup_{e\textup{ a 2-cell in }\A(M)}\PCal^e\]
\nomenclature[P]{$\PCal$}{set of all template points}
of all template points using a certain sparse matrix data structure.  It's easy to see that \[\PCal=\ancset \cup \XiSp.\]  We will see that because of this, it is convenient to compute the list $\anchors$ and the sparse matrix representation of $\PCal$ at the same time.

In this section, we specify our data structure for $\PCal$ and describe our algorithm for simultaneously computing both this data structure and the list $\anchors$.
 
\paragraph{Sparse Matrix Data Structure for \protect$\PCal$}
Note that $\XiSp\subseteq \im(\ordfun_x\times \ordfun_y)$.  Given this, it's easy to see that also $\PCal\subseteq \im (\ordfun_x\times \ordfun_y)$.  Thus, to store $\PCal$, it suffices to store $(\ordfun_x\times \ordfun_y)^{-1}(\PCal)$ and the maps $\ordfun_x$ and $\ordfun_y$.  We store $\ordfun_x$ and $\ordfun_y$ in two arrays of size $n_x$ and $n_y$, and we store $(\ordfun_x\times \ordfun_y)^{-1}(\PCal)$ in a sparse matrix $\xiSuppMat$ of size $n_x\times n_y$.  The triple ($\xiSuppMat$, $\ordfun_x$, $\ordfun_y$) is our data structure for $\PCal$.

Henceforth, to keep notation simple, we will assume that $\ordfun_x$ and $\ordfun_y$ are the identity maps on $[n_x]$ and $[n_y]$ respectively, so that $(\ordfun_x\times \ordfun_y)^{-1}(\PCal)=\PCal$.

Let us now describe $\xiSuppMat$ in detail.  An example $\xiSuppMat$ is shown in \cref{fig:xisupportmatrix}.  
Each element $u\in \PCal$ is represented in $\xiSuppMat$ by a quintuple 
\[(u,p_l,p_d,\partialLevelSet{1}{u},\partialLevelSet{2}{u}).\]
In this quintuple, $p_l$ and $p_d$ are pointers (possibly null); $p_l$ points to the element of $\PCal$ immediately to the left of $u$, and $p_d$ points to the element of $\PCal$ immediately below $u$. The objects $\partialLevelSet{1}{u}$, $\partialLevelSet{2}{u}$ are lists used in the computation of the barcode templates $\P^e$.  Initially, these lists are empty.  We discuss them further in \cref{Sec:Computing_Additional_Data_at_Faces}.  

The data structure $\xiSuppMat$ also contains an array \rows of pointers, of length $n_y$; the $i^{\mathrm{th}}$ entry of \rows points to the rightmost element of $\PCal$ with $y$-coordinate $i$.

\begin{figure}[ht]
  \begin{center}
    \begin{tikzpicture}[scale=0.55]
        \foreach \i in {1,...,10}
        {
            \node at (\i-0.5,8) {\small{\sf \i}};
        }
    
        \draw (11,0) -- (11,7) -- (12,7) -- (12,0);
        \foreach \i in {1,...,7}
        {
            \draw (11,\i-1) -- (12,\i-1);
            \node[right] at (12,\i-0.5) {\small{\sf \i}};
        }
        \node[below] at (11.5,-0.1) {\sf rows};

        \foreach \p in {(0,4),(1,2),(1,5),(2,3),(3,5),(4,0),(4,6),(5,4),(7,3),(7,4),(8,0),(8,4),(9,2),(9,5)}
	    \fill[blue!35]  \p+(0.15,0.15) rectangle +(0.85,0.85);
	\foreach \p in {(1,4),(1,5),(2,4),(2,5),(3,5),(4,2),(4,3),(4,4),(4,5),(4,6),(5,4),(5,5),(5,6),(7,3),(7,4),(7,5),(7,6),(8,0),(8,2),(8,3),(8,4),(8,5),(8,6),(9,2),(9,3),(9,4),(9,5),(9,6)}
            \draw \p+(0.15,0.15) rectangle +(0.85,0.85);
            
        \foreach \x/\a/\b in {1/5/4, 1/4/2, 2/5/4, 2/4/3, 4/6/5, 4/5/4, 4/4/3, 4/3/2, 4/2/0, 5/6/5, 5/5/4, 7/6/5, 7/5/4, 7/4/3, 8/6/5, 8/5/4, 8/4/3, 8/3/2, 8/2/0, 9/6/5, 9/5/4, 9/4/3, 9/3/2}
        {
            \draw[thick,->] ($(\x,\a)+(0.5,0.5)$) -- ($(\x,\b)+(0.5,0.85)$);
            \fill ($(\x,\a)+(0.5,0.5)$) circle (0.12);
        }

        \foreach \y/\a/\b in {0/11/8, 0/8/4, 2/11/9, 2/9/8, 2/8/4, 2/4/1, 3/11/9, 3/9/8, 3/8/7, 3/7/4, 3/4/2, 4/11/9, 4/9/8, 4/8/7, 4/7/5, 4/5/4, 4/4/2, 4/2/1, 4/1/0, 5/11/9, 5/9/8, 5/8/7, 5/7/5, 5/5/4, 5/4/3, 5/3/2, 5/2/1, 6/11/9, 6/9/8, 6/8/7, 6/7/5, 6/5/4}
        {
            \draw[thick,->] ($(\a,\y)+(0.5,0.5)$) -- ($(\b,\y)+(0.85,0.5)$);
            \fill ($(\a,\y)+(0.5,0.5)$) circle (0.12);
        }
    \end{tikzpicture}
  \end{center}
  \caption{Example of $\xiSuppMat$ for $n_x=10$ and $n_y=7$. Each element of $\PCal$ is represented by a square. Shaded squares represent elements of $\XiSp$, and squares with solid borders represent anchors. Each entry contains pointers to the next entries down and to the left; non-null pointers are illustrated by arrows. The lists $\partialLevelSet{1}{u}$ and $\partialLevelSet{2}{u}$ stored at
each $u \in \PCal$ are not shown.}
  \label{fig:xisupportmatrix}
\end{figure}
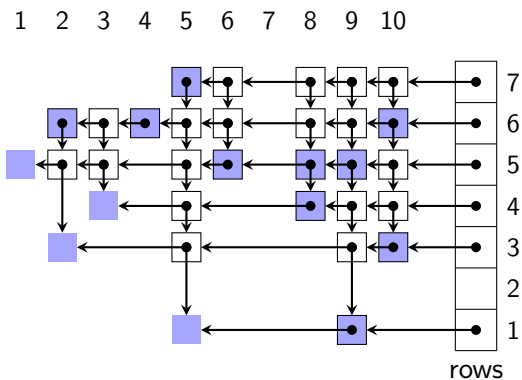

\paragraph{Computation of $\xiSuppMat$ and $\anchors$}
Our algorithm for computing Betti numbers, given in \cite{lesnick2014betti}, computes $\xi_0(M)(u)$ and $\xi_1(M)(u)$ at each bigrade $u\in [n_x]\times [n_y]$ by iterating through  $[n_x]\times [n_y]$ in lexicographical order.  As we iterate through $[n_x]\times [n_y]$, it is easy to also compute both $\xiSuppMat$ and the list $\anchors$.  Let us explain this in detail.  

Upon initialization, $\anchors$ is empty, $\xiSuppMat$ contains no entries, and all pointers in \rows are null.  We create a temporary array \columns of pointers, of length $n_x$, with each pointer initially null.
\begin{sloppypar}
At each $u\in [n_x]\times [n_y]$, once the Betti numbers at $u$ have been computed, if $u\in\ancset$, then we add $u$ to the list $\anchors$.  If $u \in \XiSp \union \ancset=\PCal$, then we add the quintuple $(u,\mathsf{rows}[u_2],\mathsf{columns}[u_1],[\,],[\,])$ to $\xiSuppMat$, and set $\mathsf{rows}[u_2]$ and $\mathsf{columns}[u_1]$ to both point to $u$.  These updates to \columns and \rows ensure that: \end{sloppypar}
\begin{itemize}
\item For each $i\in [n_y]$, $\mathsf{rows}[i]$ always points to the rightmost entry with $y$-coordinate $i$ added to $\xiSuppMat$ thus far;
\item For each $j\in [n_x]$, $\mathsf{columns}[j]$ always points to the topmost entry with $x$-coordinate $j$ added to $\xiSuppMat$ thus far.  
\end{itemize}

It remains to explain how we determine whether $u\in \ancset$.  Note that $u\in \ancset$ if and only if at least one of the following two conditions holds when we visit $u$:
\begin{enumerate}
\item Both $\mathsf{rows}[u_2]$ and $\mathsf{columns}[u_1]$ are not null.
\item $u \in S$ and \emph{either} $\mathsf{rows}[u_2]$ or $\mathsf{columns}[u_1]$ is not null.
\end{enumerate}
Using this fact, we can check whether $u\in \ancset$ in constant time.  

Beyond the $O(m^3)$ time required to compute $S$, this algorithm for computing $\xiSuppMat$ and $\anchors$ takes $O(n_x\times n_y)=O(m^2)$ time.

\subsection{Building the Line Arrangement}\label{sec:BuildingTheLineArrangement}
Recall that the anchors of $M$ correspond under point-line duality to the lines in the arrangement $\cell(M)$.  Thus, once the list of anchors has been determined, we are ready to build the DCEL representation of $\cell(M)$.  For this, our implementation of RIVET uses the well-known Bentley-Ottmann algorithm \cite{deberg2008computation}, which constructs the DCEL representation of a line arrangement with $n$ lines and $k$ vertices in time $O((n + k) \log n)$.  Since our arrangement contains $O(\kappa)$ lines and $O(\kappa^2)$ vertices, the algorithm requires $O(\kappa^2 \log \kappa)$ elementary operations.  

As explained in \cref{sec:arr_def}, the number of cells in $\cell(M)$ is $O(\kappa^2)$.  The size of the DCEL representation of any arrangement is of order the number of cells in the arrangement, so the size of the DCEL representation of $\cell(M)$ is also $O(\kappa^2)$.

\begin{remark}
The $\kappa^2\log \kappa$ term in the bound of \cref{SimpleAugArrComplexity}~(ii) arises from our use of the Bentley-Ottmann algorithm.  There are asymptotically faster algorithms for constructing line arrangements that would give a slightly smaller term in the bound of \cref{SimpleAugArrComplexity}~(ii)---in fact, we can remove the $\log \kappa$ factor.  However, the Bentley-Ottmann algorithm, which is relatively simple and performs well in practice, is a standard choice.
\end{remark}  

Each 1-cell $e$ in $\cell(M)$ lies on the line dual to some anchor $\alpha$.  In our DCEL representation of $\cell(M)$, we store a pointer at $e$ to the entry of $\xiSuppMat$ corresponding to $\alpha$.

\paragraph{Numerical Considerations}
Line arrangement computations, as with many computations in computational geometry, are notoriously sensitive to numerical errors that arise from floating-point arithmetic.  
Much effort has been invested in the development of smart arithmetic models for computational geometry which allow us to avoid the errors inexact arithmetic can produce, without giving up too much computational efficiency \cite{cgal:geometrykernel}.  Because exact arithmetic is generally far more computationally expensive than floating-point arithmetic, these models typically take a hybrid approach, relying on floating-point arithmetic in cases where the resulting errors are certain to not cause problems, and switching over to exact arithmetic for calculations where the errors could be problematic.  
Our implementation of RIVET relies on a simple such hybrid model specially tailored to the problem at hand.
 
\section{Computing the Barcode Templates}\label{Sec:Computing_Additional_Data_at_Faces}

Once we have found all anchors and constructed the line arrangement $\cell(M)$, we are ready to complete the computation of $\S(M)$ by computing the barcode templates $\P^e$ for all 2-cells $e$ of $\cell(M)$.  This section describes our core algorithm for this.  In \cref{Sec:Barcodes_Scratch}, we describe a refinement of the algorithm which performs significantly faster in practice. 

The input to our algorithm consists of three parts:
\begin{enumerate}
\item A \fir $\Phi$ of $M$, represented in the way described in \cref{Sec:FI_Reps},
\item Our sparse matrix representation $\xiSuppMat$ of the set $\PCal$ of all template points,
\item A DCEL representation of the line arrangement $\cell(M)$.
\end{enumerate}
Recall that $\Phi$ is given to us as input to our algorithm for computing $\S(M)$; the computation of $\xiSuppMat$ and $\cell(M)$ from $\Phi$ has already been described above.

Recall from \cref{Sec:Discrete_Barcodes} that 
\[\P^e=\{(\indx^e(y),\indx^e(z))\mid [y,z)\in \B{M^e}\}.\]
Thus, to compute $\P^e$ at each 2-cell $e$, it suffices to compute the pair $(\B{M^e},\indx^e)$ at each 2-cell $e$.
This is essentially what our algorithm does, though it turns out to be unnecessary to explicitly store either $\B{M^e}$ or $\indx^e$ at any point in the computation.

Note that $M\cong 0$ if and only if $S=\emptyset$, if and only if each $\P^e=\emptyset$.  Thus, we may assume that $M\not\cong 0$.

\subsection{Trimming the Free Implicit Representation}\label{Sec:Trimming}
Let \[\sbx=\{y\in \R^2\mid y\leq \lub S\}.\]   
\nomenclature[box]{$\sbx$}{quadrant in $\R^2$ with upper right corner $\lub S$}
We say a \fir $\Phi'=(\gr'_1,\gr'_2,D'_1,D'_2)$ for $M$ is \emph{trimmed} if \[\im \gr'_1\cup \im \gr'_2\in \sbx.\]

As a preliminary step in preparation for the computation of the barcode templates, if $\Phi$ is not already trimmed we replace $\Phi$ with a smaller trimmed \fir.  While it is possible to work directly with an untrimmed \fir to compute the barcode templates, it is more efficient to work with a trimmed one.  In addition, assuming that our \fir is trimmed allows for some simplifications in the description of our algorithm.

For $j=1,2$, let $l_j=|\gr_j^{-1}(\sbx)|$.  There is a unique order-preserving map \[\eta_j:[l_j]\to \gr_j^{-1}(\sbx).\]  We define $\gr'_j=\gr_j\comp \eta_j$.  We define $D'_1$ to be the submatrix of $D_1$ whose columns correspond to elements of $\im \eta_1$, and we define $D'_2$ to be the submatrix of $D_2$ whose rows and columns correspond to elements of $\im \eta_1$ and $\im \eta_2$, respectively.
Let \[\Phi'=(\gr'_1,\gr'_2,D'_1,D'_2).\]

\begin{proposition}\label{Trim:Prop}
$H(\Phi')\cong M$.
\end{proposition}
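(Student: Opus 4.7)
The plan is to construct a natural isomorphism $\alpha: H\Phi' \to M$ by showing that (i) both modules agree on the quadrant $\sbx$, and (ii) both ``stabilize'' above $\sbx$ via the retraction $\phi: \R^2 \to \sbx$ defined by $\phi(a) := (\min(a_1, (\lub S)_1), \min(a_2, (\lub S)_2))$. The degenerate case $l_1 = 0$ is handled first: here $H\Phi' = 0$ by convention, and since every generator of $\W_1$ then lies outside $\sbx$ we get $\free(\W_1)_a = 0$ for all $a \in \sbx$, so $M_a = 0$ on a set containing $\supp \xi_0(M)$; hence $M = 0$ by \cref{MinPresentations}. Assume henceforth $l_1 > 0$.

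For (i), the key point is that $\sbx$ is downward closed: for $j \in \{1,2\}$ and $a \in \sbx$, any generator $w \in \W_j$ with $\gr_j(w) \leq a$ satisfies $\gr_j(w) \in \sbx$ and is therefore preserved by trimming, so $\free(\W_j)_a = \free(\W_j')_a$ on the same basis and $\partial_j|_a = \partial_j'|_a$. The analogous comparison for $\free(\W_0)$ is more delicate since the greatest lower bound $\gr_0'$ of $\im \gr_1'$ may strictly exceed $\gr_0$; however, at grades $a \in \sbx$ with $\gr_0 \leq a$ but $\gr_0' \not\leq a$, every $\gr_1'(j)$ exceeds $a$ in the discrepant coordinate, so $\free(\W_1')_a = 0 = \free(\W_1)_a$ and both homologies vanish trivially. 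Hence $H\Phi_a = H\Phi'_a$ naturally for all $a \in \sbx$.

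For (ii), we invoke \cref{Lem:GradesOfInfluenceAndIsomorphisms}. Since $S = \supp \xi_0(M) \cup \supp \xi_1(M) \subseteq \sbx$, one verifies $\I_M(\phi(a)) = \I_M(a)$, so $M(\phi(a), a)$ is an isomorphism. For $H\Phi'$, all generators of $\W_1'$ and $\W_2'$ (and hence the derived $\gr_0'$) lie in $\sbx$, so a direct calculation on bases shows that for each $j \in \{0,1,2\}$ the internal map $\free(\W_j')(\phi(a), a)$ is an isomorphism of $K$-vector spaces intertwining the boundary maps $\partial_j'$; these assemble into an isomorphism of chain complexes at grades $\phi(a)$ and $a$, hence an isomorphism on homology. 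Composing then yields $H\Phi'_a \cong H\Phi'_{\phi(a)} = H\Phi_{\phi(a)} = M_{\phi(a)} \cong M_a$, and functoriality of each factor assembles these into the desired natural isomorphism.

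The main obstacle is the $\gr_0$ versus $\gr_0'$ mismatch in step (i): restricting to a smaller generating set of $\W_1$ can raise the greatest lower bound, changing the codomain of $\partial_1$ at intermediate grades and \emph{a priori} altering $\ker \partial_1$. The argument above shows this discrepancy is invisible to the homology because it occurs only at grades where $\free(\W_1)$ itself vanishes; verifying this cleanly, together with tracking the shift factors in the matrix formula of \cref{Sec:Free_Modules_Presentations} so that $\partial_1$ and $\partial_1'$ are identified when both $\free(\W_0)_a$ and $\free(\W_0')_a$ are nonzero, is the heart of the proof.
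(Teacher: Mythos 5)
Your proof is correct and takes essentially the same route as the paper's: identify $H\Phi'$ with $M$ over the quadrant $\sbx$ (where the trimmed and untrimmed chain complexes agree gradewise, the $\gr_0$ versus $\gr'_0$ mismatch occurring only where $\free(\W_1)$ vanishes), then extend above $\sbx$ using \cref{Lem:GradesOfInfluenceAndIsomorphisms} for $M$ together with a direct check that the internal maps of $H\Phi'$ into the region above $\sbx$ are isomorphisms --- the paper's distance-minimizing point of $\sbx$ is exactly your retraction $\phi(b)$. The only organizational difference is that the paper obtains naturality for free by first defining a global chain map $F'_\bullet\hookrightarrow F_\bullet$ and then verifying it is a pointwise isomorphism, whereas you build the isomorphism pointwise and assemble it by functoriality.
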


\begin{proof}
Associated to $\Phi$ and $\Phi'$ we have respective chain complexes of free modules 
\begin{align*}
&F_2\xrightarrow{\partial_2} F_1\xrightarrow{\partial_1} F_0\\ 
&F'_2\xrightarrow{\partial'_2} F'_1\xrightarrow{\partial_1'} F'_0,
\end{align*}
with $\ker \partial_1/\im \partial_2=H(\Phi)\cong M$ and $\ker \partial'_1/\im \partial'_2=H(\Phi')$, as in the definition of a \fir; further, since $\Phi'$ is a trimming of $\Phi$, we have obvious maps $\psi^j: F'_j\hookrightarrow F_j$ for $j=0,1,2$, making the following diagram commute:
\[
\begin{tikzcd}
F_2 \arrow{r}{\partial_2}		               &F_1 \arrow{r}{\partial_1}                         & F_0\\
F'_2 \arrow[swap]{r}{\partial'_2}\ar[hookrightarrow]{u}{\psi^2}    &F'_1 \arrow[swap]{r}{\partial'_1}\arrow[hookrightarrow]{u}{\psi^1}       & F'_0\arrow[hookrightarrow]{u}{\psi^0}
\end{tikzcd}
\]
The maps $\psi^i$ induce a map $\psi:H(\Phi')\to H(\Phi)\cong M$.  It's easy to see that $\psi_a$ is an isomorphism for $a\in \sbx$.  To finish the proof, it remains to check that $\psi_b$ is also an isomorphism for $b\not\in \sbx$.  For $b\not \in \sbx$, there is a unique element $a\in \sbx$ minimizing the distance to $b$.  Note that $a<b$.  By commutativity, it suffices to see that $M(a,b)$ is an isomorphism and $H(\Phi')(a,b)$ is an isomorphism.  \cref{Lem:GradesOfInfluenceAndIsomorphisms} gives that $M(a,b)$ is an isomorphism, and since $\im \gr'_1\cup \im \gr'_2\in \sbx$, it's easy to see directly that $H(\Phi')(a,b)$ is an isomorphism.
\end{proof}

Clearly, $\Phi'$ is trimmed.  Given $\xiSuppMat$ and our column-sparse representation of $\Phi$, we can compute $\Phi'$ from $\Phi$ in $O(m+d)$ time, where $d=O(m^2)$ is the number of non-zero entries of $D_2$.  

Henceforth we will assume that $\Phi$, the FI-rep of $M$ given as input to our algorithm, is trimmed.

\subsection{\texorpdfstring{$RU$}{RU}-Decompositions and Computation of Persistence Barcodes}\label{sec:ComputationPersistenceBarcodes}
To prepare for a description of our algorithm for computing the barcode templates, we begin with some preliminaries on the computation of persistence barcodes.  There is a large and growing body of work on this topic; see \cite{otter2015roadmap} for a recent overview  with an emphasis on publicly available software.  We restrict attention here to what is needed to explain our algorithm.  

The \emph{standard algorithm} for computing persistence barcodes was introduced in \cite{zomorodian2005computing}, building on ideas in \cite{edelsbrunner2002topological}; see also \cite{edelsbrunner2010computational} for a succinct description of the algorithm, together with implementation details.

The algorithm takes as input a \fir $\Phi=(\gr_1,\gr_2,\MatLeft,\MatRight)$ of a 1-D persistence module, with $\gr_1$ and $\gr_2$ non-decreasing, and returns $\B{H\Phi}$.  Of course, in applications $\Phi$ typically comes from the chain complex of a 1-D filtration, with the simplices in each dimension ordered according to their grade of appearance.

The algorithm is a variant of Gaussian elimination; it performs column additions to construct certain factorizations of $\MatLeft$ and $\MatRight$, from which the barcode $\B{M}$ can be read off directly.  Let us explain this in more detail, drawing on ideas introduced in \cite{cohen2006vines}: Let $R$ be an $m\times n$ matrix, and for $j$ the index of a non-zero column of $R$, let $\low(R,j)$ denote the maximum row index of a non-zero entry in column $j$ of $R$.  We say $R$ is \emph{reduced} if $\low(R,j)\ne \low(R,j')$ whenever $j\ne j'$ are the indices of non-zero columns in $R$.

The standard persistence algorithm yields a decomposition $D=R U$ of any matrix $D$ with coefficients in the field $K$, where $R$ is reduced, and $U$ is an upper-triangular matrix.  For $D$ an $r\times s$ matrix, the algorithm runs in time $O(rs^2)$.

We define an $RU$-decomposition of $\Phi$ simply to be a pair of $RU$-decompositions $D_1=R_1 U_1$ and $D_2=R_2 U_2$.  We can read $\B{M}$ off of $R_1$ and $R_2$.  To explain this, suppose $\Phi$ is of dimensions $(m_0,m_1,m_2)$, and 
let $R_i(*,j)$ denote the $j^{\mathrm{th}}$ column of $R_i$.  Define 
\begin{align*}
\pairs(\Phi)&:=\{(\low(R_2,j),j)\in [m_1]\times [m_2] \mid R_2(*,j)\ne 0\}\\
\ess(\Phi)&:=\{j\in [m_1] \mid R_1(*,j)=0\textup{ and }j\ne \low(R_2,k)\textup{ for any column $k$ of $R_2$}.\}
\end{align*}
While the $RU$-decomposition of a matrix is not unique, it is shown in \cite{cohen2006vines} that $\pairs(\Phi)$ and $\ess(\Phi)$ are independent of the choice of $RU$-decomposition of $\Phi$.  
\begin{theorem}[\cite{zomorodian2005computing,edelsbrunner2010computational}]\label{BarcodeFromReducedMat}
\[\B{M}=\big\{ [\gr_1(j),\gr_2(k)) \mid (j,k)\in \pairs(\Phi) \big\}\union \big\{ [\gr_1(j),\infty) \mid j\in \ess(\Phi)\big\}.\]
\end{theorem}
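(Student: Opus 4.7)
The plan is to build, from any $RU$-decomposition of $\Phi$, an explicit direct-sum decomposition of $M = \ker \partial_1 / \im \partial_2$ into interval modules indexed exactly by $\pairs(\Phi) \cup \ess(\Phi)$ with the prescribed endpoints; by Crawley-Boevey's structure theorem (\cite{crawley2012decomposition}) for \pfd 1-D persistence modules, this uniquely determines $\B{M}$.

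First, I would interpret the $RU$-decompositions as graded changes of basis. Since $\gr_1$ and $\gr_2$ are non-decreasing, and the matrices $U_1$, $U_2$ produced by the left-to-right column-reduction algorithm are upper triangular with ones on the diagonal, they lift to grade-preserving automorphisms of $F_1$ and $F_2$ (column additions only add earlier, lower-grade columns into later, higher-grade ones). In these new bases $\{v_j\}$ of $F_1$ and $\{w_k\}$ of $F_2$, the maps $\partial_1$ and $\partial_2$ are represented respectively by $R_1$ and $R_2$; the change of basis on $F_2$ is compatible with the one on $F_1$ because it modifies only the domain of $\partial_2$, not its codomain.

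Next I would read the interval decomposition off of the reduced matrices. A vector $v_j$ is a cycle iff column $j$ of $R_1$ is zero. For each nonzero column $k$ of $R_2$ with pivot row $j := \low(R_2,k)$, the element $\partial_2(w_k) \in \ker \partial_1$ lives at grade $\gr_2(k)$ and equals $v_j$ plus a $K$-linear combination of $v_{j'}$ with $j' < j$. Uniqueness of pivots in a reduced matrix then enables a standard triangular-elimination argument showing that the classes $\{[v_j]\}$ indexed by $j \in \ess(\Phi) \cup \{\low(R_2,k) : R_2(\ast,k) \neq 0\}$ form a homogeneous $K$-basis of $M$, and that each $[v_j]$ generates an interval submodule on which the internal maps $M(a,b)$ act as the identity precisely when both $a, b$ lie in $[\gr_1(j), d_j)$, where $d_j = \gr_2(k)$ when $(j,k) \in \pairs(\Phi)$ and $d_j = \infty$ when $j \in \ess(\Phi)$. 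This yields the claimed interval decomposition.

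The main obstacle is the bookkeeping in the triangular-elimination step: one must verify that the pivot pairing produces a genuine direct sum of interval submodules rather than merely matching dimensions grade-by-grade. This is precisely the content of the classical proof for filtered chain complexes in \cite{zomorodian2005computing, edelsbrunner2010computational}; the extension to the general FI-rep setting (where $D_1$ need not be a simplicial boundary matrix and $F_0$ may be nontrivial) introduces no new difficulty, since the whole argument is linear-algebraic and relies only on $\partial_1 \partial_2 = 0$, the grade-compatibility of the bases, and the fact that the two $RU$-decompositions can be carried out independently.
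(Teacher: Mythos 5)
The paper offers no proof of \cref{BarcodeFromReducedMat}: it is imported from \cite{zomorodian2005computing,edelsbrunner2010computational}, so the only comparison available is with the classical argument, which is indeed the route you sketch. Your outline is essentially right, but one assertion is false as written: in the new bases the maps are \emph{not} represented by $R_1$ and $R_2$ simultaneously. Changing the basis of $F_1$ changes the \emph{codomain} of $\partial_2$, so the matrix of $\partial_2$ with respect to the new bases of $F_2$ and $F_1$ is $U_1D_2U_2^{-1}=U_1R_2$, not $R_2$; your justification (``the change of basis on $F_2$ modifies only the domain of $\partial_2$'') accounts for $U_2$ but ignores the effect of $U_1$. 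The step is repairable in one line: $U_1$ is invertible and upper triangular, and left multiplication by such a matrix preserves the row index of the lowest nonzero entry of every column, so $\partial_2$ applied to the $k$-th new generator of $F_2$ is still a nonzero scalar multiple of $v_{\low(R_2,k)}$ plus terms involving only $v_{j'}$ with $j'<\low(R_2,k)$ --- which is the only consequence your elimination argument actually uses. (Zomorodian--Carlsson sidestep the issue by rewriting the higher boundary matrix in the cycle basis before reducing it; in your ``independent reductions'' formulation the explicit remark is needed.)

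Two further points that you defer to ``standard bookkeeping'' deserve to be named, since that is where the remaining content lies. First, for $[v_j]$ with $j=\low(R_2,k)$ to be a class of $M$ at all, you need column $j$ of $R_1$ to vanish; this is true but is a lemma (the ``clearing'' fact), proved by noting that $\ker\partial_1$ is freely generated by the $v_j$ with zero $R_1$-columns and that $\partial_2$ of each new $F_2$-generator lies in $\ker\partial_1$, so its $v$-expansion is supported on those $j$. Second, the cyclic submodule generated by $[v_j]$ need not itself die at $\gr_2(k)$ --- at that grade $v_j$ becomes a boundary only modulo lower-indexed generators --- so exhibiting a genuine direct sum of intervals requires the further triangular change of basis you allude to; matching dimensions grade by grade is not enough. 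With the $U_1R_2$ correction and these two steps spelled out, your proposal is a faithful account of the proof in the cited sources, including its routine extension to general \firs.
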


\paragraph{Vineyard Updates to Barcode Computations} 
Suppose that $D$ is a $d_1\times d_2$ matrix, and that $D'$ is obtained from $D$ by transposing either two adjacent rows or two adjacent columns of $D$. \cite{cohen2006vines} introduces an algorithm, known as the \emph{vineyard algorithm}, for updating an $RU$-decomposition of $D$ to obtain an $RU$-decomposition of $D'$ in time $O(d_1+d_2)$.  
This algorithm is an essential subroutine in our algorithm for computing the barcode templates.  

\subsection{Permutations of Free Implicit Representations}
As mentioned above, the standard persistence algorithm takes as input a \fir of a 1-D persistence module $\Phi=(\gr_1,\gr_2,\MatLeft,\MatRight)$, with $\gr_1$ and $\gr_2$ non-decreasing.  The reason we need $\gr_1$ and $\gr_2$ to be non-decreasing is that the formula \cref{BarcodeFromReducedMat} for reading the barcode off of the $RU$-decomposition holds only under this assumption on $\gr_1$ and $\gr_2$.  

Now suppose that we are given a 1-D \fir $\Phi=(\gr_1,\gr_2,\MatLeft,\MatRight)$ with either $\gr_1$ or $\gr_2$ \emph{not} non-decreasing.  How can we modify $\Phi$ to obtain a \fir $\Phi'$ of $H(\Phi)$ with non-decreasing grade functions, so that we can read the barcode of $\H(\Phi)$ off of an $RU$-decomposition of $\Phi'$?  We now answer this question. 

It is easy to check that the following lemma holds. 

\begin{lemma}\label{Permutation_Lemma}
Suppose $\Phi=(\gr_1,\gr_2,\MatLeft,\MatRight)$ is a \fir of an $n$-D persistence module $M$ of dimensions $(m_0,m_1,m_2)$.  Then for $\sigma_1$ and $\sigma_2$ any permutations on $[m_1]$ and $[m_2]$, respectively, and $P_1$ and $P_2$ the corresponding permutation matrices, we have that
\[\Phi':= (\gr_1\comp\sigma_1,\gr_2\comp\sigma_2,D_1 P_1,P^T_1 D_2 P_2)\]
is also a \fir of $M$.  
\end{lemma}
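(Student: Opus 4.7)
The plan is to exhibit a chain-level isomorphism between the complex associated with $\Phi$ and the complex associated with $\Phi'$, which will imply $H\Phi'\cong H\Phi\cong M$ and in particular verify the one nontrivial condition in the definition of a \fir (the matrix shapes and constant function $\gr_0$ conditions are immediate since $\sigma_j$ is a bijection and $\im(\gr_j\comp\sigma_j)=\im\gr_j$).

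First I would set $\W'_j:=([m_j],\gr_j\comp\sigma_j)$ for $j=1,2$ and $\W'_0=\W_0$. The permutation $\sigma_j$ induces a map $\phi_j:\free(\W'_j)\to\free(\W_j)$ sending the $i^{\mathrm{th}}$ generator of $\W'_j$ to the $\sigma_j(i)^{\mathrm{th}}$ generator of $\W_j$; since $\gr_j(\sigma_j(i))$ is by construction the grade of both generators, each $\phi_j$ is a grade-preserving isomorphism. Letting $\partial_1,\partial_2$ denote the morphisms associated with $\Phi$, define
\[
\partial'_1:=\partial_1\comp\phi_1,\qquad \partial'_2:=\phi_1^{-1}\comp\partial_2\comp\phi_2.
\]
Then $\partial'_1\comp\partial'_2=\partial_1\comp\partial_2\comp\phi_2=0$, and the diagram with vertical arrows $(\id_{\W_0},\phi_1,\phi_2)$ commutes by construction, so it is a chain isomorphism. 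This immediately yields $H(\partial'_1,\partial'_2)\cong H(\partial_1,\partial_2)\cong M$.

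It remains to identify the matrix representations of $\partial'_1,\partial'_2$ with $D_1 P_1$ and $P_1^T D_2 P_2$ respectively. Using the convention $(P_\sigma)_{ji}=\delta_{j,\sigma(i)}$ (so that the $i^{\mathrm{th}}$ column of $DP_\sigma$ equals the $\sigma(i)^{\mathrm{th}}$ column of $D$), a direct substitution into the matrix-entry formula of \cref{Sec:Free_Modules_Presentations} gives $[\partial'_1]_{ki}=[\partial_1]_{k,\sigma_1(i)}$, which is exactly $(D_1P_1)_{ki}$. Similarly, writing out $\partial'_2(w'_{2,j})=\phi_1^{-1}\partial_2(w_{2,\sigma_2(j)})$ and relabeling the generators of $\W'_1$ by $i=\sigma_1^{-1}(k)$ yields $[\partial'_2]_{ij}=[\partial_2]_{\sigma_1(i),\sigma_2(j)}=(P_1^T D_2 P_2)_{ij}$. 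The monomial scalings $x^{\gr'_j(i)-\gr'_{j-1}(k)}$ in the matrix-entry formula work out automatically because $\gr'_j=\gr_j\comp\sigma_j$.

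The main obstacle is really just bookkeeping: keeping the convention for the permutation matrix straight and handling the conjugation $P_1^T(\cdot)P_2$ correctly, which reflects the fact that the basis of $\free(\W_1)$ appears both as the codomain of $\partial_2$ and the domain of $\partial_1$ and hence must be permuted consistently in both matrices. Once conventions are fixed, the whole argument reduces to the observation that $D_1P_1$ and $P_1^TD_2P_2$ are exactly the matrices obtained by transporting $\partial_1,\partial_2$ across the grade-preserving basis isomorphisms $\phi_1,\phi_2$.
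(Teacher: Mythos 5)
Your proof is correct and, since the paper simply states ``It is easy to check that the following lemma holds'' and gives no argument, it supplies exactly the omitted verification: exhibit the grade-preserving basis isomorphisms $\phi_1,\phi_2$ induced by $\sigma_1,\sigma_2$, transport $\partial_1,\partial_2$ across them, and observe that the transported maps have matrices $D_1P_1$ and $P_1^TD_2P_2$ while the homology is unchanged. Your bookkeeping of the permutation-matrix convention (so that $DP_\sigma$ permutes columns and $P_\sigma^TD$ permutes rows) and your note that $\gr_0$ is unchanged because $\im(\gr_1\comp\sigma_1)=\im\gr_1$ are both the right details to spell out.
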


Thus, in the case that $M$ is 1-D, finding a \fir of $M$ with non-decreasing grade functions amounts to finding permutations $\sigma_1$ and $\sigma_2$ as above with $\gr_1\comp\sigma_1$ and $\gr_2\comp\sigma_2$ non-decreasing, and applying the corresponding permutations to the rows and columns of $P_1$ and $P_2$.  

\paragraph{Finding the Permutations $\sigma_j$ by Sorting}
For $j=1,2$, we may use a sorting algorithm to find a permutation $\sigma_j^{-1}$ which puts the list \[\gr_j(1),\gr_j(2),\ldots,\gr_j(m_j)\] in non-decreasing order.  The function $\gr_i\comp \sigma_j$ is then non-decreasing.  To take advantage of the vineyard algorithm in our main algorithm, we will want to work with a sorting algorithm which generates the permutation $\sigma^{-1}_j$ as a product of transpositions of adjacent elements in $[m_j]$.  For this, we use the well known \emph{insertion-sort} algorithm \cite{cormen2009introduction}.  This yields $\sigma_j$ as a minimum length product of adjacent transpositions. 
\nomenclature[sigma]{$\sigma_j$}{permutation of $[m_j]$}

\subsection{Induced Free Implicit Representations at Each 2-Cell}\label{sec:InducedFreeImplicitReps}
Using \cref{Permutation_Lemma} above, we next show that for any 2-cell $e$ in $\cell(M)$, $\Phi$ yields a \fir \[\Phi^e=(\gr_1^e,\gr_2^e,D^e_1,D^e_2)\] of the discrete persistence module $M^e$ introduced in \cref{Sec:Discrete_Barcodes}, with $\gr^e_1$ and $\gr^e_2$ non-decreasing.  Thus, we can compute $\B{M^e}$ by computing an $RU$-decomposition of $\Phi^e$.  

\paragraph{Lift Maps}
Recall the definition of the set of template points $\PCal^e$ from \cref{Sec:Discrete_Barcodes}.  Define a function \[\lift^e:\sbx\to \PCal^e\] by taking $\blift^e(a)=u$, for $u$ the minimum element of $\PCal^e$ such that $a\leq u$ (where as elsewhere, $<$ denotes the partial order order $\R^2$). \nomenclature[lift]{$\lift^e$}{lift map at cell $e$}
In \cref{fig:lift_map} we illustrate $\lift^e$ and $\lift^{e'}$ for a pair of adjacent 2-cells $e$ and $e'$.

\begin{figure}[ht]
  \begin{center}
    \begin{tikzpicture}
      \fill[blue!20] (-5,-2.5) -- (-4,-2.5) -- (-4,-3) -- (-3,-3) -- (-3,-3.4) -- (0,-3.4) -- (0,-1.8) -- (-2.3,-1.8) -- (-2.3,0) -- (-5,0) -- cycle;
      
      \draw[dashed,color=blue!50] (-5,0) -- (0.5,0);
      \draw[->,thick,color=blue!70] (-4,0) -- (-0.32,0);
      \draw[dashed,color=blue!70] (-5,-1.8) -- (0.5,-1.8);
      \draw[dashed,color=blue!50] (0,-3.4) -- (0,0.7);
      \draw[->,thick,color=blue!70] (0,-1.8) -- (0,0.45);
      \draw[dashed,color=blue!70] (-2.3,-3.4) -- (-2.3,0.7);
      \draw[dashed,color=blue!70] (-5,-2.5) -- (-4,-2.5) -- (-4,-3) -- (-3,-3) -- (-3,-3.4);
      
      \foreach \p in {(0,0),(-2.3,0),(0,-1.8),(-4,-2.5),(-3,-3),(-3,0),(-4,0)}
        { \fill \p circle (0.09); }
          \node[below right] at (0,0) {$\alpha$};
          \node[above right] at (-2.3,0) {$u$};
          \node[above right] at (0,-1.8) {$v$};
     
      \foreach \p in {(-3.75,-0.8),(-1.5,-1.1),(-0.8,-3),(-3.5,-2.5)}
        { \fill[purple] \p circle (0.09); }
      \draw[purple,->] (-3.75,-0.8) to [out=50,in=195] (-2.41,-0.07);
      \draw[purple,->] (-1.5,-1.1) to [out=70,in=195] (-0.11,-0.07);
      \draw[purple,->] (-0.8,-3) to [out=80,in=255] (-0.07,-0.11);
      \draw[purple,->] (-3.5,-2.5) to [out=45,in=255] (-2.37,-0.11);
      
      \draw[<->,very thick,color=green!60!black] (-2.17,-3.4) -- (0.1,0.7);
      \node[right,color=green!60!black] at (0.1,0.55) {$L$};
    \end{tikzpicture}
    \hspace{0.5in}
    \begin{tikzpicture}
      \fill[blue!20] (-5,-2.5) -- (-4,-2.5) -- (-4,-3) -- (-3,-3) -- (-3,-3.4) -- (0,-3.4) -- (0,-1.8) -- (-2.3,-1.8) -- (-2.3,0) -- (-5,0) -- cycle;
      
      \draw[dashed,color=blue!50] (-5,0) -- (0.5,0);
      \draw[->,thick,color=blue!70] (-4,0) -- (0.25,0);
      \draw[dashed,color=blue!70] (-5,-1.8) -- (0.5,-1.8);
      \draw[dashed,color=blue!50] (0,-3.4) -- (0,0.7);
      \draw[->,thick,color=blue!70] (0,-1.8) -- (0,-0.5);
      \draw[dashed,color=blue!70] (-2.3,-3.4) -- (-2.3,0.7);
      \draw[dashed,color=blue!70] (-5,-2.5) -- (-4,-2.5) -- (-4,-3) -- (-3,-3) -- (-3,-3.4);
      
      \foreach \p in {(0,0),(-2.3,0),(0,-1.8),(-4,-2.5),(-3,-3),(-3,0),(-4,0)}
        { \fill \p circle (0.09); }
        \node[above left] at (0,0) {$\alpha$};
        \node[above right] at (-2.3,0) {$u$};
        \node[above right] at (0,-1.8) {$v$};
        
      \foreach \p in {(-3.75,-0.8),(-1.5,-1.1),(-0.8,-3),(-3.5,-2.5)}
        { \fill[purple] \p circle (0.09); }
      \draw[purple,->] (-3.75,-0.8) to [out=30,in=195] (-0.11,-0.07);
      \draw[purple,->] (-1.5,-1.1) to [out=10,in=255] (-0.07,-0.11);
      \draw[purple,->] (-0.8,-3) to [out=70,in=250] (-0.07,-1.91);
      \draw[purple,->] (-3.5,-2.5) to [out=30,in=195] (-0.11,-1.87);
      
      \draw[<->,very thick,color=green!60!black] (-1.87,-3.4) -- (0.6,0.5);
      \node[right,color=green!60!black] at (0.6,0.5) {$L'$};
    \end{tikzpicture}
  \end{center}
  \caption{Illustration of $\lift^e$ (left) and $\lift^{e'}$ (right) at two adjacent cells $e$ and $e'$, containing the duals of lines $L$ and $L'$, respectively.  The black dots represent points of $\PCal=\bigcup_{e} \PCal^e$.  Note that $u,\alpha\in \PCal^e$ and $v,\alpha\in \PCal^{e'}$, but $u\not\in \PCal^{e'}$ and $v\not\in \PCal^e$.  The maps $\lift^e$, $\lift^{e'}$ are illustrated by red arrows, for a few sample points (purple dots).  The shaded region in each figure is the subset of $\sbx$ on which $\lift^e\ne \lift^{e'}.$} 
  \label{fig:lift_map}
\end{figure}

Let \[\ord^e:\PCal^e\to \{1,2,\ldots,|\PCal^e|\}\] denote the unique order-preserving bijection, i.e., the inverse of the restriction of $\indx^e$ to $\{1,2,\ldots,|\PCal^e|\}$.
\nomenclature[ord]{$\ord^e$}{order map at cell $e$}

\paragraph{Free Implicit Representation of $M^e$}
Suppose that $\Phi=(\gr_1,\gr_2,D_1,D_2)$, our \fir of $M$, is of dimensions $(m_0,m_1,m_2)$.
For $j=1,2$, let $\sigma^e_j:[m_j]\to [m_j]$ be any permutation such that 
\[\gr_j^e:=\ord^e\comp \blift^e \comp \gr_j \comp \sigma^e_j: [m_j]\to \Z\]
is non-decreasing, and write $\sigma^e=(\sigma^e_1,\sigma^e_2)$.

Let $P_1$ and $P_2$ denote the permutation matrices corresponding to $\sigma^e_1$ and $\sigma^e_2$, respectively.  Let $D_1^e=D_1 P_1$ and $D_2^e=P_1^T D_2 P_2$.

\begin{proposition}\label{Prop:Induced_Presentations}
$\Phi^e:=(\gr^e_1,\gr_2^e,D_1^e,D_2^e)$ is a \fir of $M^e$. 
\end{proposition}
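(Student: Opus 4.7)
The plan is to reduce to the unpermuted case via \cref{Permutation_Lemma}, then transfer the resulting \fir of $M$ to one of $M^e$ by pulling back along $\indx^e$. Let $P_1, P_2$ be the permutation matrices corresponding to $\sigma_1^e, \sigma_2^e$. By \cref{Permutation_Lemma},
\[\Phi' := (\gr_1\circ\sigma_1^e,\,\gr_2\circ\sigma_2^e,\,D_1 P_1,\,P_1^T D_2 P_2)\]
is a \fir of $M$. Writing $\W_j' := ([m_j], \gr_j\circ\sigma_j^e)$ for $j=1,2$ and letting $\partial_j':\free[\W_j']\to\free[\W_{j-1}']$ be the associated morphisms, we have $\partial_1'\circ\partial_2'=0$ and $M\cong\ker\partial_1'/\im\partial_2'$.

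Next, I would consider the pullback functor $\Pi^e:\kvect^{\RCat^2}\to\kvect^{\PCat}$ defined by $\Pi^e(N):=N\circ\indx^e$. Since kernels and cokernels in functor categories are computed pointwise, $\Pi^e$ is exact, so $\Pi^e(M)\cong\ker\Pi^e(\partial_1')/\im\Pi^e(\partial_2')$, and by construction $\Pi^e(M)=M^e|_{\PCat}$. For $j=1,2$, a basis element $i\in[m_j]$ lies in $\Pi^e(\free[\W_j'])_z=\free[\W_j']_{\indx^e(z)}$ iff $\gr_j(\sigma_j^e(i))\leq\indx^e(z)$; by monotonicity of $\lift^e$ and the fact that $\indx^e(z)\in\PCal^e$, this is equivalent to $\gr_j^e(i)\leq z$. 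Hence there are natural isomorphisms $\Pi^e(\free[\W_j'])\cong\free[\W_j^e]|_{\PCat}$, where $\W_j^e:=([m_j],\gr_j^e)$, and under them $\Pi^e(\partial_2')$ corresponds to the discrete morphism $\partial_2^e$ defined by $D_2^e$. The matrix identity $D_1^e D_2^e=(D_1 P_1)(P_1^T D_2 P_2)=D_1 D_2 P_2=0$ (using $P_1 P_1^T=I$ and $D_1 D_2=0$) then yields $\partial_1^e\circ\partial_2^e=0$.

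The most delicate point, and the one I expect to require the most care, concerns $\gr_0^e$: by definition it is the constant function to $c:=\min_i\gr_1^e(i)$, whereas $\Pi^e(\free[\W_0'])$ has basis elements of constant grade $c_0:=\ord^e(\lift^e(\inf\im\gr_1))$, and $c_0$ may be strictly less than $c$ when $\inf\im\gr_1\notin\im\gr_1$. Thus $\Pi^e(\free[\W_0'])$ and $\free[\W_0^e]|_{\PCat}$ need not be isomorphic as $\Z$-graded objects. However, the discrepancy is confined to grades $c_0\leq z<c$, and at such grades both $\Pi^e(\free[\W_1'])_z$ and $\free[\W_1^e]_z$ vanish, forcing $\ker\Pi^e(\partial_1')_z=0=\ker(\partial_1^e)_z$; at grades $z\geq c$ the two target spaces both equal $K^{m_0}$ with identity transitions and the two maps share the matrix $D_1^e$, so their kernels agree there as well. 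A parallel argument for images, together with the observation that $\gr_j^e\geq 1$ forces $\ker\partial_1^e/\im\partial_2^e$ to vanish on $\ZCat\setminus\PCat$ (matching $M^e$), gives the identification $M^e\cong\ker\partial_1^e/\im\partial_2^e$, completing the verification that $\Phi^e$ is a \fir of $M^e$.
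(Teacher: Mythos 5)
Your proof is correct and takes essentially the same route as the paper's terse argument, which combines \cref{Permutation_Lemma} with the claim that $(\ord^e\comp \blift^e \comp \gr_1,\ \ord^e\comp \blift^e \comp \gr_2,\ D_1,\ D_2)$ is a \fir of $M^e$, left there as ``not hard to check.'' You merely reorder the two ingredients---applying the Permutation Lemma to $M$ first and then restricting along $\indx^e$, rather than restricting first and permuting last---and supply the details of the restriction step, including the careful handling of the mismatch between the grade of $\W_0$ under pullback (governed by $\inf \im\gr_1$) and the grade prescribed by the \fir definition (governed by $\min \im \gr_1^e$), which is a genuine subtlety the paper glosses over.
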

\begin{proof}
It's not hard to check that \[(\ord^e\comp \blift^e \comp \gr_1,\, \ord^e\comp \blift^e \comp \gr_2,D_1,D_2)\] is a \fir of $M^e$.  Given this, the result follows from \cref{Permutation_Lemma}.
\end{proof}

\paragraph{Non-Uniqueness of $\Phi^e$}
In general, $\Phi^e$ is not uniquely defined, because it depends on the pair of permutations $\sigma^e$, which needn't be unique. We will sometimes write $\Phi^e=\Phi(\sigma^e)$ to emphasize the dependence on $\sigma^e$.  We say any $\sigma^e$ chosen as above is \emph{valid}.  

\paragraph{Reading off $\P^e$ from an $RU$-Decomposition of $\Phi^e$}
For $j=1,2$, let 
\begin{equation*}
    f^e_j=\blift^e \comp \gr_j \comp \sigma^e_j={(\ord^e)}^{-1}\comp \gr_j^e,
\end{equation*}
and write $f^e=(f^e_1,f^e_2)$.  We call $f^e$ the \emph{template map} for $\Phi^e$.  \nomenclature[fe]{$f^e$}{template map}
Note that $f^e$ is independent of the choice of a valid $\sigma^e$. 

From \cref{BarcodeFromReducedMat}, \cref{Prop:Induced_Presentations}, and the definition of the barcode template $\P^e$ in \cref{Sec:Discrete_Barcodes}, we have the following relationship between $\Phi^e$, $f^e$, and $\P^e$:
\begin{equation}\label{BarcodeTemplateFormula}
\P^e=\big\{(f_1^e(j),f_2^e(k)) \mid (j,k)\in \pairs(\Phi^e)\big\} \cup \big\{(f_1^e(j),\infty) \mid j\in \ess(\Phi^e)\big\}.
\end{equation}

\subsection{Our Algorithm}\label{Sec:Algorithm_High_Level}
\cref{BarcodeTemplateFormula} tells us that for a 2-cell $e$ in $\cell(M)$, to compute the barcode template $\P^e$ it suffices to compute the template map $f^e$, along with the $RU$-decomposition of $\Phi^e=\Phi(\sigma^e)$, for some valid $\sigma^e$.  This is exactly what our algorithm does; we give a description of the algorithm here, deferring some details to later sections.  

We need to compute $\P^e$ at every 2-cell $e$.  A na\"ive approach would be to do the computation from scratch at each $2$-cell $e$.  However, we can do better, by leveraging the work done at one cell to expedite the computation at a neighboring cell.

We proceed as follows:  Let $G$ denote the \emph{dual graph} of $\cell(M)$; this is the undirected graph with a vertex for each 2-cell $e$ of $\cell(M)$, and an edge $[e,e']$ for each pair of adjacent 2-cells $e,e'\in \cell(M)$.  
The dual graph is illustrated in \cref{fig:dual_graph}. 
\nomenclature[G]{$G$}{dual graph of $\cell(M)$}
We compute a path  $\pth=e_0,e_1,\ldots e_w$ in $G$ which visits each 2-cell at least once. Our algorithm for computing $\pth$ is discussed below, in \cref{Sec:Computing_The_Path}.  
\nomenclature[Gamma]{$\Gamma$}{path through all 2-cells of $G$}

\begin{figure}[ht]
  \begin{center}
    \begin{tikzpicture}[dot/.style 2 args={circle,inner sep=1.5pt,fill,blue,label={#2:\textcolor{blue}{\footnotesize{$#1$}}},name=#1}]
        \draw[<->,thick,black!30!white] (0,0) -- (0,4);
        
        \draw[->,thick,black!30!white] (0,0.2) -- (5,4);
        \draw[->,thick,black!30!white] (0,1) -- (5,1.2);
        \draw[->,thick,black!30!white] (0,1.5) -- (5,3.5);
        \draw[->,thick,black!30!white] (0,2.1) -- (5,2.3);
        
        \node[dot={a}{above}] at (2,3.2) {};
        \node[dot={b}{right}] at (4.9,3.7) {};
        \node[dot={c}{right}] at (4.2,2.7) {};
        \node[dot={d}{above}] at (2.6,2.38) {};
        \node[dot={e}{below}] at (1.4,1.8) {};
        \node[dot={f}{left}] at (0.5,1.9) {};
        \node[dot={g}{left}] at (0.45,0.8) {};
        \node[dot={h}{below}] at (2.5,0.5) {};
        \node[dot={i}{right}] at (3.5,1.7) {};
        
        \draw[blue,thick] (a) -- (b) -- (c) -- (d) -- (e) -- (f) -- (a) -- (d);
        \draw[blue,thick] (c) -- (i) -- (h) -- (g) -- (e) -- (i);
    \end{tikzpicture}
  \end{center}
  \caption{The line arrangement $\cell(M)$ (in grey), together with its dual graph $G$ (in blue).  The path $\pth$ through $G$ might visit the vertices in the order $a, b, c, d, e, f, e, g, h, i$.}
  \label{fig:dual_graph}
\end{figure}

Let us adopt the convention of abbreviating an expression of the form $(-)^{e_i}$ by $(-)^i$.  For example, we write $f^{e_i}$ as $f^i$.

Once we have computed $\pth$, for each $i=1,\ldots,w$, we compute the template map $f^{i}$ and an $RU$-decomposition of $\Phi(\sigma^{i})=(\gr^i_1,\gr^i_2,D^i_1,D^i_2)$, for some valid choice of $\sigma^{i}$.  We proceed in order of increasing $i$.  

For $j=1,2$, we store $f^i_j$ in memory by separately storing its factors $\blift^{i-1}\comp \gr_j$ and $\sigma^{i-1}_j$; we discuss the data structures for this in \cref{Sec:Data_Structures}.  Thus, to compute $f^{i}_j$, we compute both $\blift^{i-1}\comp \gr_j$ and $\sigma^{i-1}_j$.  

The initial cell $e_0$ is chosen in a way that allows for a simple combinatorial algorithm to compute $\blift^0\comp \gr_j$ and  $\sigma^0_j$; see \cref{Sec:Initial_Cell}.  Letting $P_j$ denote the matrix representation of $\sigma^0_j$, we have that \[D^0_1=D_1 P_1,\qquad D^0_2=P_1^T D_2P_2.\]  Thus, $D^0_1$ and $D^0_2$ can be obtained from $D_1$ and $D_2$ by performing row and column permutations.  Given $D^0_1$ and $D^0_2$, we compute an $RU$-decomposition of $\Phi^0$ via an application of the standard persistence algorithm.

For $1\leq i\leq w$, we compute $f^i$ as an update of $f^{i-1}$, and we compute the $RU$-decomposition of 
$\Phi^i$ as an update of the $RU$-decomposition of $\Phi^{i-1}$.  Let us explain this in more detail (with yet more detail to come in later sections).  To update $f^{i-1}_j$, we update both $\blift^{i-1}\comp \gr_j$ and $\sigma^{i-1}_j$.   First, we update $\blift^{i-1}\comp \gr_j$ to obtain $\blift^i\comp \gr_j$; details of this computation are given in \cref{Sec:Computations_at_Cell_i}.     
Second, we update $\sigma^{i-1}_j$ to obtain $\sigma^i_j$ as follows.  Define 
\begin{equation*}
    \tilde \gr^i_j:=\ord_i \comp \blift^i \comp \gr_j \comp \sigma^{i-1}_j.
\end{equation*}
Note the distinction between $\tilde \gr^i_j$ and $\gr^i_j$: the former is defined in terms of $\sigma^{i-1}_j$, while the latter is defined in terms of $\sigma^i_j$.

Applying the insertion-sort algorithm to $\tilde\gr^i_j$, we compute   
a sequence of transpositions of adjacent elements in $[m_j]$ such that, for $\tau^{-1}_j:[m_j]\to [m_j]$ the composition of these transpositions, $\tilde \gr^i_j \comp \tau_j$ is non-decreasing.  We take $\sigma^i_j = \sigma^{i-1}_j \comp \tau_j$.  Clearly, then, \[\gr^i_j= \ord_i \comp \blift^i \comp \gr_j \comp \sigma^i_j\] is non-decreasing, so $\sigma^i$ is valid.  

Note that for $P_j$ the matrix representation of $\tau_j$, we have \[D^i_1=D^{i-1}_1 P_1,\qquad D^{i}_2=P_1^T D^{i-1}_2 P_2.\]

To compute an $RU$-decomposition of $\Phi^i(\sigma^e)$ from our $RU$-decomposition of $\Phi^{i-1}(\sigma^e)$, we exploit the decomposition of $\tau^{-1}_j$ as a sequence of transpositions provided by the insertion-sort algorithm, and apply the $RU$-update algorithm of \cite{cohen2006vines} repeatedly, performing an update of the $RU$-decomposition for each transposition in the sequence.  We note that neither $\tau$, nor $\tau^{-1}$, nor the decomposition of $\tau^{-1}_j$ into transpositions ever needs to be stored explicitly in memory.  Rather, we use each transposition to perform part of the update as soon as it is computed; after this, there is no need to store the transposition.  

It remains to explain how we compute $\pth$, $\blift^{0}\comp \gr_j$, and $\sigma^{0}$, and how we update $\blift^{i-1}\comp\gr_j$ to obtain $\blift^{i}\comp \gr_j$.  In what follows, we explain all of this, and also fill in some details about the data structures used by our algorithm.

\subsection{Computing the Path \texorpdfstring{$\Gamma$}{}}\label{Sec:Computing_The_Path}
We first explain how we choose the path $\pth=e_0,e_1,\ldots, e_w$.  
As above, let $G$ be the dual graph of the line arrangement $\cell(M)$.  To compute $\pth$, we first compute a weight for each edge of $G$.  The weight of $[e,e']$ is chosen to be an estimate of the amount of work our algorithm must do to pass between cells $e$ and $e'$, in either direction.  We defer the details of how we define and compute these edge weights until \cref{Sec:Choosing_Edge_Weights}.  While the choice of edge weights impacts our choice of $\pth$, and hence the speed of our algorithm for computing the barcode templates, our asymptotic complexity bounds for our algorithm are independent of the choice of edge weights.  

We take $e_0$ to be the topmost 2-cell in $\cell(M)$; the points of $e_0$ correspond under point-line duality to lines that pass to the right of all points in $\XiSp$.  

Call a path in $G$ starting at $e_0$ and visiting every vertex of $G$ a \emph{valid} path.  We'd like to choose $\pth$ to be a valid path of minimum length, but we do not have an efficient algorithm for computing such a path. (Indeed, we expect the problem is NP-hard).  Instead, we compute a path $\pth$ whose length is \emph{approximately} minimum.

Let $\pth^*$ be a valid path of minimum length.  It is straightforward to compute a valid path $\pth$ such that $\length(\pth)\leq 2\,\length(\pth^*)$: First we compute a minimum spanning tree $\M$ for $G$ via a standard algorithm such as Kruskal's Algorithm \cite{cormen2009introduction}. Via depth-first search of $\M$ starting at $e_0$, we can find a valid path $\pth$ in $\M$ which traverses each edge of $\M$ at most twice.  Since $\length(\M)\leq \length(\pth^*)$, we have that $\length(\pth)\leq 2\,\length(\pth^*)$.

In fact, an algorithm with a better approximation ratio is known:  \cite{hoogeveen1991analysis} shows that a variant of the Christofides $\frac{3}{2}$-approximation algorithm for the traveling salesman problem on a metric graph \cite{christofides1976} yields a valid path $\pth$ with $\length(\pth)\leq \frac{3}{2}\length(\pth^*)$.

\subsection{Data Structures}\label{Sec:Data_Structures}
Before completing the specification of our algorithm for computing the barcode templates $\P^e$, we need to describe the data structures used internally by the algorithm.

\paragraph{Persistent Homology and Vineyard Update Data Structures}
First, we mention that our algorithm uses each of the data structures specified in \cite{cohen2006vines} for computing and updating $RU$-decompositions. These consist consists of 4 sparse matrices---one to store each of $R_1,R_2$, $U_1$ and $U_2$---as well as several additional 1-D arrays which aid in performing the persistence and vineyard algorithms, and in reading barcodes off of the matrices $R_1$, $R_2$.  Since we use these data structures only in the way described in \cite{cohen2006vines}, we refer the reader to that paper for details.

\paragraph{Array Data Structures}
For $j=1,2$, we also maintain arrays $\g{j}{}$, $\u{j}{}$, $\sig{j}{}$, and $\sigInv{j}{}$, 
  each of length $[m_j]$:
\begin{itemize}
\item $\g{j}{}$ is a static array with $\g{j}{k}=\gr_j(k)$.
\item $\u{j}{k}$ is an array of pointers to entries of $\xiSuppMat$.
\item After our computations at cell $e_i$ are complete, 
\[\u{j}{k}=\lift^i\comp \gr_j(k),\quad \sig{j}{k}=\sigma^i_j(k),\quad \sigInv{j}{k}=(\sigma^i_j)^{-1}(k).\]
\end{itemize}

\begin{remark}\label{Rmk:Template_Map_Encoding}
Note that after all computations at cell $e_i$ are complete, we can use $\u{j}{}$ and $\sig{j}{}$ together to perform constant time evaluations of the template map \[f^i_j=\lift^i\comp \gr_j \comp \sigma^i_j.\]  Together with the $RU$-decomposition of $\Phi^i$, this allows us to efficiently read off $\P^e$ using \cref{BarcodeTemplateFormula}.
\end{remark}

\paragraph{The Lists $\partialLevelSet{j}{u}$}
We mentioned in \cref{Sec:Sparse_Data_Structure_For_T} that for each $u\in \PCal$, we store lists $\partialLevelSet{1}{u}$ and $\partialLevelSet{2}{u}$ at the entry of $\xiSuppMat$ corresponding to $u$.  We now specify what these lists store: After our computations at cell $e_i$ are complete, $\partialLevelSet{j}{u}$ stores $(\lift^i\comp \gr_j\comp \sigma^i_j)^{-1}(u)$ for $u\in \PCal^i$, and $\partialLevelSet{j}{u}$ is empty for $u\not\in \PCal^i$.  As we will see in \cref{Sec:Computations_at_Cell_i}, our algorithm uses the lists $\partialLevelSet{j}{u}$ to efficiently perform the required updates as we pass from cell $e_{i-1}$ to cell $e_i$.

\subsection{Computations at the Initial Cell \texorpdfstring{$e_0$}{}}\label{Sec:Initial_Cell}
We next describe in more detail the computations performed at the initial cell $e_0$, building on the explanation of \cref{Sec:Algorithm_High_Level}.  

To begin our computations at cell $e_0$, for each $u\in \PCal$, we initialize \[\partialLevelSet{j}{u}=(\lift^0\comp \gr_j)^{-1}(u).\]
Note that $(\lift^0\comp \gr_j)^{-1}(u)$ is nonempty only if $u$ is the rightmost element of $\PCal$ on the horizontal line passing through $u$.  Thus, the elements $u\in \PCal$ such that $\partialLevelSet{j}{u}$ is non-empty have unique $y$-coordinates.

To efficiently initialize the lists $\partialLevelSet{j}{u}$, we use an ${O(m \log m+ m\kappa)}$ time sweep algorithm, described in \cref{sec:alg_details}.  When we add $k\in [m_j]$ to the list $\partialLevelSet{j}{u}$, we also set $\u{j}{k}=u$.  

Next, we concatenate the non-empty lists $\partialLevelSet{j}{u}$ into single list of length $m_j$, in increasing order of the $y$-coordinate of $u$, and set $\sigInv{j}{}$ equal to this list.  For $j=1,2$, given $\sigInv{j}{}$, we construct $\sig{j}{}$ in the obvious way in time $O(m_j)$.  
We define $\sigma^{0}_j$ to be the permutation whose array representation is $\sig{j}{}$.   

Letting $P_j$ denote the matrix representation of $\sigma^0_j$, we use the arrays $\sigInv{1}{}$ and $\sigInv{2}{}$ to compute \[ D^0_1=D_1 P_1, \quad D^0_2=P_1^T D_2 P_2.\] Using the column sparse representation of $D_1$ and $D_2$ described in \cite{cohen2006vines}, which allows for implicit representations of row permutations, this takes $O(m)$ time.   

As already explained in \cref{Sec:Algorithm_High_Level}, we then apply the standard persistence algorithm to compute the RU-decompositions $D^0_1=R^0_1 U^0_1$, $ D^0_2=R^0_2U^0_2.$ 

This completes the work done by the algorithm at cell $e_0$.  

\subsection{Computations at Cell \texorpdfstring{$e_i$, for $i\geq 1$}{i}}\label{Sec:Computations_at_Cell_i}
In \cref{Sec:Algorithm_High_Level}, we outlined our algorithm for updating the template map $f^{i-1}$ and $RU$-decomposition $\Phi^{i-1}$, as we pass from cell $e_{i-1}$ to $e_1$.  We now give a more detailed account of this algorithm, filling in some details omitted earlier.  As explained in \cref{Sec:Algorithm_High_Level}, to update \[f^{i-1}_j=\lift^{i-1}\comp \gr_j\comp \sigma^{i-1},\] our algorithm separately updates the factors $\lift^{i-1}\comp \gr_j$ and $\sigma^{i-1}_j$.  

While it is possible to first completely update $\lift^{i-1}\comp \gr_j$ and then update $\sigma^{i-1}_j$ via an application of insertion sort, it is slightly more efficient to interleave the updates of the two factors, so that when we update the value of $\lift^{i-1}\comp \gr_j(k)$ for some $k\in [m_j]$, we immediately perform the insertion-sort transpositions necessitated by that update, along with the corresponding updates to the RU-decomposition.  This is the approach we take.  

We assume without loss of generality that $e_{i-1}$ lies below $e_i$.  The shared boundary of $e_{i-1}$ and $e_i$ lies on the line dual to some anchor $\alpha=(\alpha_1,\alpha_2)$.  $\xiSuppMat$ provides constant time access to the element $u=(u_1,u_2)\in \PCal$ immediately to the left of $\alpha$ and the element $v=(v_1,v_2)\in \PCal$ immediately below $\alpha$, if such $u$ and $v$ exist.  To keep our exposition simple, we will assume that $u$ and $v$ do both exist; the cases that either $u$ or $v$ do not exist are similar, but simpler.  
Note that 
\[u,\alpha\in \PCal^{i-1},\qquad v,\alpha\in \PCal^{i},\qquad u\not\in \PCal^i,\qquad  v\not\in \PCal^{i-1};\]
see \cref{fig:lift_map}.

Recall that for each $i$, $\lift^{i}\comp \gr_j$ is represented in memory using the data structures $\u{j}{}$ and $\partialLevelSet{j}{\cdot}$, whereas $\sigma^{i}_j$ is represented using $\sig{j}{}$ and $\sigInv{j}{}$.

To perform the required updates as we pass from cell $e_{i-1}$ to cell $e_i$, we first iterate through the list $\partialLevelSet{j}{\alpha}$ in decreasing order.  For each $k\in \partialLevelSet{j}{\alpha}$, if the $y$-coordinate of $\gr_j(k)$ is less than or equal to $v_2$, then $\lift^i\comp \gr_j(k)=v$; we remove $k$ from $\partialLevelSet{j}{\alpha}$, add $k$ to the beginning of the list $\partialLevelSet{j}{v}$, and set $\u{j}{k}=v$.  

If, on the other hand, the $y$-coordinate of $\gr_j(k)$ is greater than $v_2$, then $\lift^i\comp\gr_j(k)=\alpha$, and we do not perform any updates to $\u{j}{}$, $\partialLevelSet{j}{\alpha}$, or $\partialLevelSet{j}{v}$ for this value of $k$.  If in addition, $\u{j}{\protect\sigInv{j}{\protect\sig{j}{k}+1} } =v$, then we apply insertion sort to update $\sig{j}{}$, $\sigInv{j}{}$, and the $RU$-decomposition; specifically, we compute $\topsort(\sig{j}{k})$, for $\topsort$ the algorithm defined below.

\begin{algorithm}[ht]
	\caption{\protect$\topsort(w)$}\label{alg:topsort}
	\footnotesize
	\begin{algorithmic}[1]
		\Require $w\in \{1,\ldots, m_j-1\}$ such that $\u{j}{{\sigInv{j}{y}}} \leq\u{j}{\protect{\sigInv{j}{z}}}$ whenever $w<y<z$
		\Ensure Updated $\sig{j}{}$ and $\sigInv{j}{}$ such that $\u{j}{{\sigInv{j}{y}}} \leq\u{j}{\protect{\sigInv{j}{z}}}$ whenever ${w\leq y<z}$; correspondingly updated $RU$-decomposition
		\State $\curr\leftarrow w$;
	\While{$\curr<m_j$ and $\u{j}{{\sigInv{j}{\curr}}} >\u{j}{\protect{\sigInv{j}{\curr+1}}}$}
	        \State swap $\sigInv{j}{\curr}$ and $\sigInv{j}{\curr+1}$
	        \State $\sig{j}{\sigInv{j}{\curr}}\leftarrow \curr$
	        \State $\sig{j}{\sigInv{j}{\curr+1}}\leftarrow\curr+1$
               \State Perform the corresponding updates to the $RU$-decomposition, as described in \cref{Sec:Algorithm_High_Level}.
	       \State $\curr\leftarrow \curr+1$ 
	\EndWhile
	\end{algorithmic}
\end{algorithm}

Once we have finished iterating through the list $\partialLevelSet{j}{\alpha}$, we next iterate through the list $\partialLevelSet{j}{u}$ in decreasing order.  For each $k\in \partialLevelSet{j}{u}$, we perform updates exactly as we did above for elements of $\partialLevelSet{j}{\alpha}$, with one difference: If the second coordinate of $\gr_j(k)$ is greater than $v_2$, then we must remove $k$ from $\partialLevelSet{j}{u}$, add $k$ to the beginning of the list 
$\partialLevelSet{j}{\alpha}$, and set $\u{j}{k}=\alpha$.  

When we have finished iterating through the list $\partialLevelSet{j}{u}$, our updates at cell $e_i$ are complete.
 
\subsection{Choosing Edge Weights for \texorpdfstring{$G$}{G}}\label{Sec:Choosing_Edge_Weights}
We have seen in \cref{Sec:Computing_The_Path} that the path $\pth$ depends on our choice of weights $w(\e)$ on the edges $\e$ of $G$.  We now explain how we choose and compute these weights.  
 As we will explain in \cref{Sec:Speedups}, computing these weights is also the first step in two practical improvements to our algorithm.

In practice, the cost of our algorithm for computing the barcode templates is dominated by the cost of updating the $RU$-decompositions. On average, we expect the cost of updating the $RU$-decomposition as we traverse edge $E$ in $G$ to be roughly proportional to the total number of insertion-sort transpositions performed.  Thus, if it were the case that the average number $t(\e)$ of transpositions performed as we traverse $\e$ were independent of the choice of path $\pth$, then it would be reasonable to take $w(\e)=t(\e)$.  In fact, $t(\e)$ does depend on $\pth$.  Nevertheless, we can give a simple, computable estimate of $t(\e)$ which is independent of $\pth$.  We choose $w(\e)$ to be this estimate.  For $\e=[e_1,e_2]$, our definition of $w(\e)$ in fact depends only on the anchor line $L$ containing the common boundary of $e_1,e_2$, so that we may write $w(\e)=w(L)$.

To prepare for the definition of $w(L)$, we introduce some terminology, which we will also need in \cref{Sec:TotalTranspositions}.

\paragraph{Switches and Separations}
For $e$ and $e'$ adjacent 2-cells of $\cell(M)$, we say $r,s\in \sbx$ \emph{switch at $(e,e')$} if either \[\lift^e(r)<\lift^e(s)\quad \text{and} \quad\lift^{e'}(s)< \lift^{e'}(r),\] or \[\lift^{e'}(r)< \lift^{e'}(s)\quad \text{and}\quad\lift^{e}(s)< \lift^{e}(r).\]  Similarly, for $r,s$ incomparable, we say \emph{$r,s$ separate} at $(e,e')$ if either \[\lift^e(r)\ne \lift^e(s)\quad\text{and}\quad \lift^{e'}(r)=\lift^{e'}(s),\] or \[\lift^e(r)=\lift^e(s)\quad\text{and}\quad\lift^{e'}(r)\ne\lift^{e'}(s).\]

We omit the straightforward proof of the following:

\begin{lemma}\label{Face_Independence_Of_Switches}
Suppose $(e,e')$ and $(f,f')$ are pairs of adjacent 2-cells of $\cell(M)$, with the shared boundary of each pair lying on the same anchor line $L$.  Then 
\begin{enumerate}[(i)]
\item $r,s\in \R^2$ switch at $(e,e')$ if and only if $r,s$ switch at $(f,f')$.
\item $r,s$ separate at $(e,e')$ if and only if $r,s$ separate at $(f,f')$.  
\end{enumerate}
\end{lemma}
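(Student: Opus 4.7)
The key observation is that the partition $\XiSp^e$ at any 2-cell $e$ adjacent to the anchor line $L = \dual_p(\alpha)$ has an ``anchor-local'' structure depending only on $\alpha$ and on which side of $L$ the cell sits. From this, the switch/separate behavior at $(e,e')$ can be shown to be determined purely by $r$, $s$, and data intrinsic to $\alpha$---invariants unchanged when we replace $(e,e')$ by $(f,f')$. Switching and separation are both symmetric in the pair, so we may reduce to the case in which $e$ and $f$ lie on the same side of $L$ (hence $e'$, $f'$ on the other).

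First, define $W_\alpha = \{w \in \XiSp : w \leq \alpha \textup{ and } (w_x = \alpha_x \textup{ or } w_y = \alpha_y)\}$. A direct computation from the definition of $\push_L$ shows that for any $L_0 \in \L$ passing through $\alpha$, $\push_{L_0}(w) = \alpha$ precisely when $w \in W_\alpha$, $\push_{L_0}(w) < \alpha$ on $L_0$ precisely when $w_x < \alpha_x$ and $w_y < \alpha_y$, and otherwise $\push_{L_0}(w) > \alpha$. Thus $\XiSp$ splits canonically into pre-$\alpha$, $W_\alpha$, and post-$\alpha$ subsets, each independent of the particular choice of $L_0 \in \L$ through $\alpha$.

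Next, I would examine small perturbations $L_\pm$ of such an $L_0$. Elements outside $W_\alpha$ retain pushes strictly displaced from $\alpha$ in the order on $L_\pm$, so their relative positions in the partition are preserved. Elements of $W_\alpha$ cluster near $\alpha$, and a case analysis of the push formula shows the partition sub-order on $W_\alpha$ on $L_\pm$ depends only on which side of $\dual_p(\alpha)$ the perturbation lies on: on one side the ``horizontal'' sub-block (elements with $w_y = \alpha_y$) precedes the ``vertical'' sub-block (elements with $w_x = \alpha_x$), with $\alpha$ itself (if in $\XiSp$) joining one of these blocks in a side-determined way; on the other side this ordering is reversed. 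Applying \cref{Cor:2Cells}, $\XiSp^e$ therefore contains $W_\alpha$ as a contiguous block whose internal structure depends only on the side of $L$ containing $e$.

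Because both $\lub\{w \in \XiSp : w_x < \alpha_x, w_y < \alpha_y\}$ and $\lub W_\alpha$ are intrinsic to $\alpha$, the values $\indx^e(z)$ agree with $\indx^{e'}(z)$ for $z$ at or past the end of the $W_\alpha$-block, so $\lift^e(r) = \lift^{e'}(r)$ whenever this lift lies in the pre- or post-$W_\alpha$ region; whether $r$ lifts into the $W_\alpha$-block, and if so to which element, is determined by $r$, $\alpha$, $W_\alpha$, the pre-$\alpha$ LUB, and the side designation. Consequently the entire comparison of $\lift^e(r)$, $\lift^e(s)$, $\lift^{e'}(r)$, $\lift^{e'}(s)$ is encoded in data intrinsic to $L$ and the pair $(r,s)$, and this same data determines the comparison at $(f,f')$, yielding both (i) and (ii). The main obstacle will be executing the push-map case analysis rigorously, especially when $\alpha \in \XiSp$ and when $W_\alpha$ contains multiple horizontal or vertical elements whose internal sub-ordering inside a single partition part must also be verified; the computation must confirm uniformity of the sub-block structure across all slopes through $\alpha$ corresponding to points on the shared 1-cell of $\dual_p(\alpha)$.
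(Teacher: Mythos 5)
Your overall strategy is sound, and it gives a valid route to a result the paper leaves unproved. The reduction to $e,f$ on a common side of $L$, the three-way decomposition of $\XiSp$ into $D_\alpha = \{w: w_1 < \alpha_1,\ w_2 < \alpha_2\}$, $W_\alpha$, and the remainder, and the observation that the $W_\alpha$-block of $\XiSp^e$ has a side-determined two-part internal structure (horizontal elements vs.\ vertical elements, with $\alpha$ itself joining one sub-block according to the side) are all correct, and the perturbation-plus-\cref{Cor:2Cells} reasoning does propagate that structure from the boundary of a cell to the whole cell. You are also right that whether $r$ lifts into the $W_\alpha$-block — i.e.\ whether $r\le\alpha$ but $r\not\le\lub(D_\alpha)$ — and, if so, to which template point, is determined only by $\alpha$, $W_\alpha$, $\lub(D_\alpha)$, and the side.

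The gap is in the final synthesis. The claim that ``the entire comparison of $\lift^e(r),\lift^e(s),\lift^{e'}(r),\lift^{e'}(s)$ is encoded in data intrinsic to $L$ and the pair $(r,s)$'' is false: if, say, both $r\le\lub(D_\alpha)$ and $s\le\lub(D_\alpha)$, then $\lift^e(r)$ and $\lift^e(s)$ are determined by the $D_\alpha$-block of $\PCal^e$, which genuinely depends on the cell $e$ (adjacent cells on a common side are separated by other anchor lines that reorganize $D_\alpha$), so the comparison at $e$ and at $f$ can differ. What saves the argument is not intrinsicness of the comparison, but that in this regime the lift does not change when crossing $L$: $\lift^e(r)=\lift^{e'}(r)$ and $\lift^e(s)=\lift^{e'}(s)$, so there is no switch and no separation at $(e,e')$, and by the same token none at $(f,f')$. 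You thus need an explicit case split: (a) both of $r,s$ satisfy $r\not\le\lub(D_\alpha)$ and $r\le\alpha$ — then both lifts are in the interval $(\lub(D_\alpha),\alpha]$ of $\PCal^e$ and are intrinsic to the side, so the whole four-way comparison is intrinsic; (b) exactly one of $r,s$ satisfies this — then its lift is in $(\lub(D_\alpha),\alpha]$ and the other's lift is either $\le\lub(D_\alpha)$ or $>\alpha$, and the comparison is strict and in the same direction on both sides, so no switch or separation occurs; (c) neither — no lift changes across $L$, so again no switch or separation. Only case (a) can contribute a switch or separation, and it is fully side-determined. Also note a related boundary point: the value $\alpha$ should be treated as part of the $W_\alpha$-block rather than the post-region, since $\lift^e(r)=\alpha$ does not by itself force $\lift^{e'}(r)=\alpha$. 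Finally, it is worth flagging that the indices at which your $W_\alpha$-block begins and ends in $\XiSp^e$ versus $\XiSp^{e'}$ need not coincide (the block can have different numbers of parts on the two sides), so ``$\indx^e(z)$ agrees with $\indx^{e'}(z)$ at the same $z$'' should be rephrased in terms of template-point sets rather than literal indices.
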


In view of \cref{Face_Independence_Of_Switches}, for $L$ an anchor line, we say that $a,b\in \R^2$ \emph{switch at $L$} if $a,b$ switch at $e,e'$ for any adjacent 2-cells $e,e'$ whose boundary lies on $L$.  Analogously, we speak of $a,b\in \R^2$ \emph{separating at $L$}.  

\begin{lemma}
If $r,s$ switch at any anchor line $L$, then $r,s$ are incomparable.  
\end{lemma}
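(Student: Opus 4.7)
The plan is to prove the contrapositive: if $r \leq s$ in the partial order on $\R^2$, then $r$ and $s$ cannot switch at any anchor line $L$. The key observation is that the lift maps $\lift^e$ are monotone with respect to the partial order on $\R^2$, so the ordering of $\lift^e(r)$ and $\lift^e(s)$ is determined by the partial ordering of $r,s$ regardless of the cell $e$.

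To establish monotonicity of $\lift^e$, I would first recall from \cref{Sec:Discrete_Barcodes} that $\PCal^e = \im \indx^e$ where $\indx^e$ is non-decreasing on positive integers and injective on $\{1,\ldots,|\XiSp^e|\}$. Hence $\PCal^e$ is a \emph{chain} in the partial order on $\R^2$, and the total order on $\PCal^e$ induced by $\indx^e$ coincides with the restriction of the partial order on $\R^2$ to $\PCal^e$. Given this, if $r \leq s$, then any $u \in \PCal^e$ satisfying $s \leq u$ also satisfies $r \leq u$, so $\{u \in \PCal^e \mid s \leq u\} \subseteq \{u \in \PCal^e \mid r \leq u\}$. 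Since minima are taken in a chain, we get $\lift^e(r) \leq \lift^e(s)$. The same reasoning applies to $e'$, yielding $\lift^{e'}(r) \leq \lift^{e'}(s)$.

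Neither of these inequalities is compatible with a switch at $(e,e')$, which by definition requires a strict reversal of order between the two cells. Hence no switch occurs at $L$, completing the contrapositive.

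I do not anticipate a main obstacle here: once the chain property of $\PCal^e$ is noted, the argument is essentially a one-line monotonicity observation. The only subtlety is to make sure we interpret the inequalities $\lift^e(r) < \lift^e(s)$ appearing in the definition of ``switch'' consistently with the partial order on $\R^2$, which is justified precisely by the chain property.
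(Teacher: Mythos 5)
Your proof is correct, but it takes a genuinely different route from the paper's. The paper argues directly: if $r,s$ switch at $L=\dual_p(\alpha)$, it uses the local geometry near the anchor $\alpha$ (the points $u$, $v$ immediately to its left and below, as in Figure~\ref{fig:lift_map}) to extract the explicit coordinate inequalities $r_1\leq u_1<s_1$ and $s_2\leq v_2<r_2$, which immediately exhibit $r$ and $s$ as incomparable. You instead argue by contrapositive, using the fact that each $\PCal^e=\im\indx^e$ is a chain (since $\indx^e$ is non-decreasing, which the paper notes in \cref{Sec:Discrete_Barcodes}) so that $\lift^e$ is well-defined as a minimum in a totally ordered set, and then observing that this makes $\lift^e$ monotone in $r$: $r\leq s$ implies $\{u\in\PCal^e\mid s\leq u\}\subseteq\{u\in\PCal^e\mid r\leq u\}$, hence $\lift^e(r)\leq\lift^e(s)$ uniformly over all 2-cells $e$, which precludes the strict reversal required by the definition of a switch. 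Your approach is more structural and less dependent on the geometric specifics of where the anchor sits; the paper's approach is more concrete and, as a by-product, produces quantitative coordinate information about $r$ and $s$ (not needed for the lemma itself, but in the spirit of the surrounding analysis in \cref{Lem:Switches_And_Separations}). Both are complete; the monotonicity observation you isolate is clean enough that it could arguably be recorded as its own one-line lemma.
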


\begin{proof}
Suppose $L=\dual_p(\alpha)$ for some anchor $\alpha$.  If $r,s$ switch at $L$, then there exist $u$, $v$ as in \cref{fig:lift_map}, and exchanging and $r$ and $s$ if necessary, we have that 
\[r_1\leq u_1<s_1,\quad s_2\leq v_2<r_2.\qedhere\]
\end{proof}

\begin{remark}\label{switch_remark}
Every time we cross an anchor line $L$, our algorithm performs one insertion-sort transposition for each pair $k,l\in [m_j]$ such that $\gr_j(k),\gr_j(l)$ switch at $L$.  For each pair $k,l\in [m_j]$ such that $\gr_j(k),\gr_j(l)$ separate at $L$, the algorithm may only perform a corresponding insertion-sort transposition when crossing $L$ in one direction---e.g., from above to below---and may sometimes not perform such a transposition even when crossing $L$ in this direction.  

It is reasonable to estimate, then, that for each pair $k,l\in [m_j]$ such that $\gr_j(k),\gr_j(l)$ separate at $L$, the algorithm performs a corresponding transposition roughly $\frac{1}{4}$ of the time.
\end{remark}

\paragraph{Definition of $w(L)$}
For an anchor line $L$, a finite set $Y$, and a function $f:Y\to \R^2$, define $\switch_L(f)$ (respectively $\sep_L(f)$) to be the number of unordered pairs $a,b\in Y$ such that $f(a),f(b)$ switch (respectively, separate) along $L$.  Motivated by \cref{switch_remark}, we define \[w(L)=\switch_L(\gr_1)+\switch_L(\gr_2)+\frac{1}{4}\sep_L(\gr_1)+\frac{1}{4}\sep_L(\gr_2).\]

\paragraph{Computing the weights $w(L)$}\
The weights $w(L)$ can be be computed using a simplified version of our main algorithm for computing all barcode templates: First, we choose a path $Q$ through the 2-cells of $\cell(M)$  starting at $e_0$ and crossing every anchor line $L$ once; for example, we can choose $Q$ to be a path through the rightmost cells of $\cell(M)$.  We then run a variant of the algorithm for computing the barcode templates described above, using the path $Q$ in place of $P$, and omitting all of the steps involving matrices and updates of $RU$-decompositions.  For $e$, $e'$ adjacent 2-cells in $Q$ with shared boundary on the anchor line $L=\dual_p(\alpha)$, we compute $w(L)$ as we pass from cell $e$ to cell $e'$.

To explain how this works, let $u,v$ be as in \cref{Sec:Computations_at_Cell_i}; for simplicity, assume that $u$ and $v$ exist, as we did there.  For any pair of elements $r,s$ that switch or separate at $L$, $\lift^e(r),\lift^e(s)\in \{u,\alpha\}$, so to compute $w(L)$ we only need to consider pairs whose elements lie in the lists 
$\partialLevelSet{j}{u}$ and $\partialLevelSet{j}{\alpha}$.   Further, the lines $x=u_1$ and $y=v_2$ determine a decomposition of the plane into four quadrants, and whether $r,s$ switch or separate is completely determined by which of these quadrants contain $r$ and $s$; see \cref{fig:lift_map}.  Using these observations, we can easily extend the update procedure described in \cref{Sec:Computations_at_Cell_i} to compute the weight $w(L)$ as we cross from $e$ into $e'$.

\section{Cost of Computing and Storing the Augmented Arrangement}\label{Sec:Complexity_Results} 
In this section, we prove \cref{SimpleAugArrComplexity}, which bounds the cost of computing and storing $\S(M)$.  Recall that \cref{SimpleAugArrComplexity} is stated for persistence modules arising as the $i^{\mathrm{th}}$ persistent homology of a bifiltration.  Using language of \firs, we may state the result in a more general algebraic form:

\begin{proposition}\label{Prop:Algebraic_Complexity}
Let $M$ be 2-D persistence module of coarseness $\kappa$, and let $\Phi$ be a \fir of $M$ of dimensions $(m_0,m_1,m_2)$.  Letting $m=m_0+m_1+m_2$, we have that
\begin{enumerate}
\item $\S(M)$ is of size $O(m\kappa^2)$,
\item Our algorithm computes $\S(M)$ using $O(m^3 \kappa+m\kappa^2\log \kappa)$ elementary operations, and requires $O(m^2+m\kappa^2)$ storage.
\end{enumerate}
\end{proposition}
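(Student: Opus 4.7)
The plan is to decompose the analysis according to the three main phases of the algorithm: precomputation (Betti numbers, $\xiSuppMat$, and the arrangement), the initial $RU$-decomposition at $e_0$, and the sequence of updates along the path $\pth$. Each piece contributes to the final bounds, and the dominant cost will clearly come from the update phase.

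For part (i), I would observe that $\cell(M)$ has $O(\kappa^2)$ cells of each dimension by the counting argument in \cref{sec:arr_def}, so the DCEL costs $O(\kappa^2)$. At each 2-cell $e$, the template $\P^e$ is in bijection with $\B{M^e}$ via \cref{BarcodeTemplateFormula}, and $M^e\cong H\Phi^e$ has dimension $m_1\leq m$, so $|\P^e|=O(m)$. This gives the $O(m\kappa^2)$ size bound immediately.  For storage, the $RU$-decomposition data structures of \cite{cohen2006vines} are four $O(m)\times O(m)$ matrices (worst-case dense), contributing $O(m^2)$; the trimmed FI-rep, the arrays $\g{j}{},\u{j}{},\sig{j}{},\sigInv{j}{}$, and $\xiSuppMat$ with its auxiliary lists $\partialLevelSet{j}{u}$ together take $O(m+\kappa)$, so the grand total is $O(m^2+m\kappa^2)$.

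For the time bound in part (ii), I would itemize the phases: (a) computing $\xi_0(M),\xi_1(M)$ via \cite{lesnick2014betti} costs $O(m^3)$; (b) the sweep producing $\xiSuppMat$ and $\anchors$ costs $O(m^2+m\kappa)$; (c) Bentley-Ottmann on $O(\kappa)$ anchor lines costs $O(\kappa^2\log\kappa)$; (d) building $G$, assigning edge weights, computing an MST, and extracting $\pth$ by DFS costs $O(\kappa^2\log\kappa)$, where the weights are computed by the sweep variant of \cref{Sec:Choosing_Edge_Weights} in total time $O((m+\kappa)\kappa^2)$; (e) the initial sweep at $e_0$ costs $O(m\log m+m\kappa)$ and the initial standard persistence reduction on $D^0_1,D^0_2$ costs $O(m^3)$; (g) reading off each $\P^e$ using \cref{BarcodeTemplateFormula} and \cref{Rmk:Template_Map_Encoding} costs $O(m)$ per cell, for $O(m\kappa^2)$ total. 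All of these fit inside the claimed bound, so everything reduces to analyzing (f), the updates along $\pth$.

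The crux is the claim that the total number of insertion-sort transpositions performed over all traversals in $\pth$ is $O(m^2\kappa)$; since each transposition triggers an $O(m)$ vineyard update from \cite{cohen2006vines}, this delivers the $O(m^3\kappa)$ dominant term. I would prove this by bounding, for each unordered pair $(k,\ell)\subseteq[m_j]$, the total number of switches plus separations of $\gr_j(k),\gr_j(\ell)$ over all traversed edges. By the characterization of \cref{Thm:ArrangementCharacterization}, for incomparable $\gr_j(k),\gr_j(\ell)$ the pair switches at the unique anchor line $\dual_p(\lub(\gr_j(k),\gr_j(\ell)))$, and for a comparable pair a similar geometric argument shows separations occur only along a single anchor line determined by the shared coordinate. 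Since $\pth$ is a DFS traversal of an MST, each edge of $G$ is traversed at most twice, and each anchor line contains only $O(\kappa)$ edges of $G$ (being cut by the other $O(\kappa)$ anchor lines), so every anchor line is crossed at most $O(\kappa)$ times by $\pth$. Therefore the total count over all pairs is $O(\kappa)\cdot O(m^2)=O(m^2\kappa)$. Combining (a)–(g) gives $O(m^3\kappa+m\kappa^2\log\kappa)$. The main obstacle, and the part I would write most carefully, is the switch/separation count for comparable pairs, where one must verify that the geometric uniqueness statement holds in the degenerate cases (shared coordinates and anchors lying on arrangement vertices) and that the vineyard update cost is truly constant per transposition on the data structures described in \cref{Sec:Data_Structures}.
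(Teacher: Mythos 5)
Your overall decomposition mirrors the paper's — size of the DCEL plus barcode templates for part (i), an itemized sub-computation table plus a bound on total insertion-sort transpositions for part (ii) — and you correctly identify the crux: that each unordered pair of grades gives rise to $O(1)$ anchor lines at which transpositions can occur, combined with the fact that the MST-based path crosses each anchor line $O(\kappa)$ times. However, two of your claims in that central argument do not hold as stated.

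First, the anchor line where an incomparable pair $\gr_j(k),\gr_j(\ell)$ switches is \emph{not} $\dual_p(\lub(\gr_j(k),\gr_j(\ell)))$. The grades $\gr_j(k),\gr_j(\ell)$ are arbitrary points of $\sbx$, generally not members of $\XiSp$, so their least upper bound is typically not an anchor and $\dual_p$ of it need not be an anchor line at all. (Concretely: take $\XiSp=\{(0,0),(2,1),(1,2)\}$, so the only anchor is $(2,2)$; for $r=(0.5,1.5)$, $s=(1.5,0.5)$ one has $\lub(r,s)=(1.5,1.5)$, which is not an anchor.) The paper's \cref{Lem:Switches_And_Separations} instead shows that the relevant lines are $\dual_p(\lub(r',q^x))$, $\dual_p(\lub(q^y,s'))$, or $\dual_p(\lub(r',s'))$, where $r',s',q^x,q^y$ are distinguished elements of the lift sets $\lift(r)$, $\lift(s)$, and $\lift(r)\cap\lift(s)$; establishing this requires the structural descriptions of the lift sets in \cref{lem:lifts_one_cell} and \cref{lem:possible_lifts}, together with a case analysis. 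Your invocation of \cref{Thm:ArrangementCharacterization} does not suffice because that theorem concerns which lines are \emph{critical}, not which critical line causes a given pair to switch.

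Second, your treatment of comparable pairs is off. Separation (and switching) are only defined for incomparable $r,s$, so a ``geometric uniqueness argument for separations of comparable pairs determined by the shared coordinate'' is not what is needed. What the paper actually proves in \cref{Prop:TotalTranspositions} is that comparable pairs $\gr_j(k),\gr_j(k')$ contribute \emph{zero} transpositions, and the reason is not geometric but algorithmic: the initialization of $\sigInv{j}{}$ at $e_0$ respects the partial order among the grades, and since $\lift^e$ is order-preserving this invariant is maintained at every subsequent cell, so no insertion-sort swap of such a pair is ever triggered. This is the step that lets one restrict attention to incomparable pairs in the first place.

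Finally, a minor discrepancy: you bound the cost of computing the edge weights by $O((m+\kappa)\kappa^2)$, but that would break the stated overall bound when $\kappa$ is close to $m^2$; the paper computes the weights along a special path of length $O(\kappa)$ (crossing each anchor line once) and obtains $O(m^2+m\kappa)$, which does fit.
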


To see that \cref{SimpleAugArrComplexity} follows from \cref{Prop:Algebraic_Complexity}, let $\F$ be a (1-critical or multi-critical) bifiltration of size $l$, and recall from section \cref{Sec:Motivativing_Free_Implicit_Reps} that, using the construction of \cite{chacholski2012combinatorial}, in $O(l \log l)$ time we can compute a \fir of $H_i(\F)$ of dimensions $(m_0,m_1,m_2)$, with $m_0+m_1+m_2=O(l)$.

\subsection{Size of the Augmented Arrangement}
We prove \cref{Prop:Algebraic_Complexity}~(i) first.  As noted in \cref{sec:BuildingTheLineArrangement}, the DCEL representation of $\cell(M)$ is of size $O(\kappa^2)$.

At each 2-cell of $\cell(M)$, we store the barcode template $\P{}^e$. 
By considering the $RU$-decomposition, we see that if \[\Phi=(\gr_1,\gr_2,\MatLeft, \MatRight)\] is a \fir of a 1-D persistence module $N$ of dimensions $(m_0,m_1,m_2)$, then $|\B{N}|\leq m_1$.  
Hence, \cref{Prop:Induced_Presentations} implies that $|\P{}^e|\leq m_1$ for all 2-cells $e$.  Therefore, our representation of $\S(M)$ in memory is of total size $O(m_1\kappa^2)=O(m\kappa^2)$, as claimed.  

\subsection{Cost of Computing the Augmented Arrangement}\label{Sec:CostOfComputingBarcodeTemplates}
We now turn to the proof of \cref{Prop:Algebraic_Complexity}~(ii).  
As we have seen, our algorithm for computing $\S(M)$ involves several sub-computations.
For each sub-computation, a row in \cref{Table:Arrangement_Cost} lists the data computed by this sub-computation, a bound on the number of elementary operation required, and the sections in this paper where the details were discussed.

\begin{table}[ht]\footnotesize
	\begin{center}
	\caption{Cost of augmented arrangement sub-computations}
	\label{Table:Arrangement_Cost}
	\begin{tabular}{M{5cm} m{3.3cm}  l}
		\toprule
		\textbf{Data} & \textbf{$\#$ Elem. Operations} & \textbf{Details in}  \\
		\midrule
		Set $\XiSp=\supp \xi_0(M) \union \supp\xi_1(M)$  & $O(m^3)$ & \cref{sec:ComputingBettiNumbers} \\
		\midrule[0.3pt]
		Template points $\PCal$ (stored in $\xiSuppMat$) and list $\anchors$ & $O(m^2)$ & \cref{Sec:Sparse_Data_Structure_For_T}\\
		\midrule[0.3pt]
		Arrangement $\cell(M)$ (constructed via Bentley-Ottmann algorithm) & $O(\kappa^2 \log \kappa)$ & \cref{sec:BuildingTheLineArrangement} \\
		\midrule[0.3pt]
		Data structures for point location & $O(\kappa^2 \log \kappa)$ & \cref{Sec:Data_Structure} \\
		\midrule[0.3pt]
		Barcode templates $\P{}^e$ & $O(m^3 \kappa+(m+\log \kappa)\kappa^2)$ & \cref{Sec:Computing_Additional_Data_at_Faces} \\
		\bottomrule
	\end{tabular}
	\end{center}
\end{table}

The bounds in the first four rows of \cref{Table:Arrangement_Cost} were explained earlier, so it remains  to analyze the cost of our algorithm for computing the barcode templates $\P^e$.  The computation of $\P^e$ itself involves a number of steps, whose individual time complexities we again list in \cref{Table:Barcode_Cost}.

\begin{table}[ht]\footnotesize
	\begin{center}
	\caption{Cost of barcode template sub-computations}
	\label{Table:Barcode_Cost}
	\begin{tabular}{M{6.4cm} m{3.3cm} l}
		\toprule
		\textbf{Data} & \textbf{$\#$ Elem. Operations} & \textbf{Details in}  \\
		\midrule
		Trimming the \fir & $O(m^2)$ & \cref{Sec:Trimming} \\
		\midrule[0.3pt]
		Path $\pth$ (found via the 2-approximation algorithm for the optimal path, using Kruskal's MST algorithm) & $O(\kappa^2 \log \kappa)$ & \cref{Sec:Computing_The_Path} \\
		\midrule[0.3pt]
		$\partialLevelSet{j}{u}$, $\u{j}{}$, $\sig{j}{}$, $\sigInv{j}{}$ at cell $e_0$ & $O(m \log m+ m\kappa)$ & \cref{Sec:Initial_Cell}, \cref{sec:frontier_sweep_details} \\
		\midrule[0.3pt]
		RU-decomposition of $\Phi^0$ & $O(m^3)$ & \cref{sec:ComputationPersistenceBarcodes} \\ 
		\midrule[0.3pt]
		Reading the barcode template $\P^e$ off of the $RU$-decomposition of $\Phi^i$, for all $i\geq 0$ & $O(m\kappa^2)$ & \cref{sec:InducedFreeImplicitReps}, \cref{Sec:Data_Structures}\\
		\midrule[0.3pt]\midrule[0.3pt]
		$\partialLevelSet{j}{u}$ and $\u{j}{}$ at all cells $e_i$, $i\geq 1$ & $O(m\kappa^2)$ & \cref{Sec:Computations_at_Cell_i} \\
		\midrule[0.3pt]
		$\sig{j}{}$, $\sigInv{j}{}$ at all $e_i$, $i\geq 1$ & $O(m^2\kappa)$ & \cref{Sec:Computations_at_Cell_i} \\
		\midrule[0.3pt]
		RU-decompositions at all $e_i$, $i\geq 1$ & $ O(m^3 \kappa+\kappa^2)$ & \cref{Sec:Computations_at_Cell_i} \\
		\midrule[0.3pt]
		Weights $w(L)$ for all anchor lines $L$ & $O(m^2+m\kappa)$ & \cref{Sec:Choosing_Edge_Weights} \\
		\bottomrule
	\end{tabular}
	\end{center}
\end{table}

The bounds in all but the last four rows in \cref{Table:Barcode_Cost} (above the double horizontal line) were either explained earlier, or are clear from the discussion presented.  In the remainder of this section, we verify the last four bounds.

\paragraph{Cost of Updates of the $\partialLevelSet{j}{\cdot}$ and $\u{j}{}$ at $e_i$, $i\geq 1$}
In the notation of \cref{Sec:Computations_at_Cell_i}, to update the lists $\partialLevelSet{j}{\cdot}$ to their proper values at $e_i$, our algorithm considers performing an update for each element $k$ in the lists $\partialLevelSet{j}{u}$ and $\partialLevelSet{j}{\alpha}$.  In the worst case, for each cell $e_i$ there are $O(m)$ such elements $k$ to consider in total.  For each $k$, the updates of $\partialLevelSet{j}{\cdot}$ and $\u{j}{}$ take constant time.  Thus the total amount of work we need to do at cell $e_i$ is $O(m)$.  The path $P$ contains $O(\kappa^2)$ 2-cells, so the total work to perform the updates over all 2-cells is $O(m\kappa^2)$, as claimed.

\paragraph{A Bound on the Total Number of Transpositions}
To establish the next two bounds, we take advantage of the following result, which we prove in \cref{Sec:TotalTranspositions}:

\begin{proposition}\label{Prop:TotalTranspositions}
Our algorithm for computing all barcode templates performs a total of $O(m^2 \kappa)$ insertion-sort transpositions on 
$\sigInv{1}{}$ and $\sigInv{2}{}$.
\end{proposition}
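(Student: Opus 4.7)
The plan is to bound the total transpositions $T$ by the product of two factors: a uniform bound on how often the path $\Gamma$ crosses any one anchor line, and a combinatorial count of all switch and separation events.  By \cref{switch_remark}, each crossing of an anchor line $L$ by $\Gamma$ contributes at most $\switch_L(\gr_j) + \sep_L(\gr_j)$ transpositions to $\sigInv{j}{}$.  Writing $c(L)$ for the number of times $\Gamma$ crosses $L$, summing over crossings and exchanging the order of summation yields
\[
T \;\leq\; \sum_{j=1,2} \sum_L c(L)\bigl(\switch_L(\gr_j) + \sep_L(\gr_j)\bigr).
\]
So it suffices to show $c(L) = O(\kappa)$ uniformly in $L$ and $\sum_L\bigl(\switch_L(\gr_j) + \sep_L(\gr_j)\bigr) = O(m_j^2)$ for each $j$.

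For $c(L)$: recall from \cref{Sec:Computing_The_Path} that $\Gamma$ is obtained by a depth-first traversal of an MST $\M$ of the dual graph $G$, so $\Gamma$ uses each MST edge at most twice, and hence each 1-cell of $\cell(M)$ is crossed by $\Gamma$ at most twice.  Each anchor line $L$ contains $O(\kappa)$ 1-cells (one per intersection with another anchor line, plus unbounded 1-cells at the two ends), giving $c(L) = O(\kappa)$.

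For the switch-and-separation count I swap summations,
\[
\sum_L\bigl(\switch_L(\gr_j) + \sep_L(\gr_j)\bigr) \;=\; \sum_{\{k,l\} \subseteq [m_j]} |A_{kl}|,
\]
where $A_{kl}$ is the set of anchor lines at which $\gr_j(k),\gr_j(l)$ switch or separate, and I would argue $|A_{kl}| = O(1)$ for every pair.  For switches, the argument mirrors the proof of \cref{Thm:ArrangementCharacterization}: the lift-order of $\gr_j(k),\gr_j(l)$ on the chain $\PCal^e$ matches the push-order onto any $L \in e$, so it can only reverse as we cross the anchor line dual to $\lub(\gr_j(k),\gr_j(l))$, and only when this lub is an anchor.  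For separations, an analogous analysis of how $\PCal^e$ evolves under a single transposition of $\XiSp^e$ shows that the lift-equality status of an incomparable pair can likewise only change at the anchor line dual to their lub.  Combined with the bound on $c(L)$, this gives $T = O(\kappa) \cdot O(m^2) = O(m^2\kappa)$.

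The main obstacle is the separation half of the per-pair bound.  Switches are controlled by the geometry of $\push_L$ alone, but separations depend on the chain structure of $\PCal^e$ near $\lub(\gr_j(k),\gr_j(l))$, and a priori many anchor lines can reshape that chain.  The cleanest approach I see is to give an intrinsic characterization of when $\lift^e(\gr_j(k)) = \lift^e(\gr_j(l))$---for instance, in terms of the push-images of $\gr_j(k)$ and $\gr_j(l)$ lying in a common class of the $\XiSp$-partition of any $L \in e$---and then check that only the anchor line dual to the lub can flip this condition when we cross it.
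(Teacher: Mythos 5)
Your overall decomposition is the same as the paper's: bound crossings of any one anchor line by $O(\kappa)$ (via the DFS-of-MST path), then bound the number of anchor lines at which a given pair $\gr_j(k),\gr_j(l)$ can switch or separate by $O(1)$, and multiply. The $c(L)=O(\kappa)$ half is fine and matches the paper. The gap is in the per-pair $O(1)$ bound, and it is a real gap, not just a missing routine check.

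Your claim that a pair of incomparable points $r=\gr_j(k)$, $s=\gr_j(l)$ can switch only at the anchor line dual to $\lub(r,s)$ is not correct. The lift maps $\lift^e$ take values in $\PCal$, and $\lub(r,s)$ need not lie in $\PCal$ at all, nor be an anchor. Concretely, take $\PCal=\{(0,2),(2,0),(3,3)\}$, $r=(0,1)$, $s=(1,0)$: the only anchor is $(2,2)$, and $r,s$ do switch at $\dual_p((2,2))$, not at $\dual_p((1,1))=\dual_p(\lub(r,s))$. In the paper's \cref{Lem:Switches_And_Separations}, the switch line, when it exists, is $\dual_p(\lub(r',s'))$ for certain extremal elements $r'\in\lift(r)$, $s'\in\lift(s)$, and the separation lines are $\dual_p(\lub(r',q_x))$ and $\dual_p(\lub(q_y,s'))$ for extremal elements of $\lift(r)\cap\lift(s)$. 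None of these is $\dual_p(\lub(r,s))$ in general. So the ``only at the lub line'' heuristic would mislead you for both the switch and the separation halves, and the intrinsic characterization you suggest in your last paragraph is aiming at the wrong target.

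What the paper actually does is first characterize the possible lift sets $\lift(r)$ via \cref{lem:lifts_one_cell,lem:possible_lifts} (the lift set is a ``staircase'' with a specific shape, \cref{fig:lift_r}), then carry out a seven-case analysis on whether $\Rc$, $\Sc$, $\Qc$ (the parts of $\lift(r)$, $\lift(s)$, $\lift(r)\cap\lift(s)$ relevant to the pair) are empty or not, pinning down in each case exactly which one or two anchor lines produce a switch or separation. This yields the clean dichotomy: at most one switch line with no separations, or at most two separation lines with no switch. That lemma is the content you are missing, and it is genuinely the hard part. Your outer counting is sound, and the paper also includes the observation you have implicitly (comparable pairs never get transposed, because the initialization respects order and lifts preserve order), so once you supply the analogue of \cref{Lem:Switches_And_Separations} the argument closes.
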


\paragraph{Cost of Updates of $ \protect \sig{j}{}$, $ \protect \sigInv{j}{}$ at cells $e_i$, $i\geq 1$}
The cost of updating $\sig{}{}$ is proportional to the cost of updating $\sigInv{j}{}$.  Thus it is immediate from \cref{Prop:TotalTranspositions} that the total cost of updating these arrays is $O(m^2\kappa)$. 

\paragraph{Cost of Updates of $RU$-decomposition at cells $e_i$, $i\geq 1$}
There are $O(\kappa^2)$ cells to consider; this gives us the $O(\kappa^2)$ term.  For each transposition performed on $\sig{j}{}$, we call the vineyard algorithm, described in \cref{sec:ComputationPersistenceBarcodes}, at most twice.  Each call to the vineyard algorithm takes time $O(m)$.  By \cref{Prop:TotalTranspositions}, then, the total cost of all vineyard updates performed is $O(m^3 \kappa)$.  This gives the desired bound.

\paragraph{Cost of Computing Weights $w(L)$}
As explained in \cref{Sec:Choosing_Edge_Weights}, we compute the edge weights $w(L)$ using a stripped-down variant of our algorithm for computing the barcode templates $\P^e$.  Using \cref{Lem:Switches_And_Separations} below, it can be checked that computing all edge weights takes time $O(m^2+m \kappa)$.

\paragraph{Storage Requirements}
By \cref{Prop:Algebraic_Complexity}~(i), $\S(M)$ itself is of size $O(m\kappa^2)$, so our algorithm for computing $\S(M)$ requires at least this much storage.  Our algorithm for computing the Betti numbers requires $O(m^2)$ storage, as do the persistence algorithm and the vineyard updates to $RU$-decompositions.  The Bentley-Ottmann algorithm requires $O(\kappa^2)$ storage, as does Kruskal's algorithm.  Constructing the search data structures used for queries of $\S(M)$ also requires $O(\kappa^2)$ storage. From our descriptions of the data structures used in the remaining parts of our algorithm, it is clear that other steps of our algorithm for computing $\S(M)$ do not require more than $O(m^2+m\kappa^2)$ storage.  The bound of \cref{Prop:Algebraic_Complexity}~(ii) on the storage requirements of our algorithm follows. 

\subsection{Bounding Total Number of Transpositions Required to Compute all Barcode Templates}\label{Sec:TotalTranspositions}
To complete our proof of \cref{Prop:Algebraic_Complexity}~(ii), it remains to prove \cref{Prop:TotalTranspositions}.
To this end, for $r\in \sbx$, let 
\begin{align*}
\Pgeqr &:= \{u\in \PCal\mid  u\geq r\},\\
\lift(r)&:=\{u\in \PCal \mid u=\lift^e(r)\textup{ for some 2-cell }e\}.
\end{align*}
We leave to the reader the proofs of the following two lemmas. 

\begin{lemma}\label{lem:lifts_one_cell}
For $e$ a 2-cell, $L$ a line with $\dual_\ell(L)\in e$, and $u\in \Pgeqr$, $\lift^e(r)=u$ if and only if the following two conditions hold:
\begin{enumerate}
\item $\push_L(u)$ is the minimum element of $\push_L(\Pgeqr)$,
\item for all $u'\in \Pgeqr$, $u' \ne u$, with $\push_L(u')=\push_L(u)$, we have $u'<u$.
\end{enumerate}
\end{lemma}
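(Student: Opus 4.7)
The plan is to reduce the characterization to the identity $\lift^e(r) = \indx^e(z^*)$, where $z^* := \min\{z : \indx^e(z) \geq r\}$. This identity holds because the restriction of $\indx^e$ to $\{1, \ldots, |\XiSp^e|\}$ is injective and non-decreasing (each term is the $\lub$ of a strictly larger subset of $\XiSp$ than the previous), so $\PCal^e$ forms a chain $\indx^e(1) < \indx^e(2) < \cdots$ in the partial order on $\R^2$, and $\lift^e(r)$ is simply the first term in this chain dominating $r$.

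The main technical ingredient is the following lemma: writing $p_1 < p_2 < \cdots < p_{|\XiSp^e|}$ for the distinct values attained by $\push_L$ on $\XiSp$ (well defined because $\XiSp^e$ partitions $\XiSp$ into level sets of $\push_L$), every $u' \in \PCal$ satisfies $\push_L(u') = p_j$ for some $j$, and $u' \leq \indx^e(j)$. I would split into the two cases $u' \in \XiSp$ and $u' \in \ancset$. In the first case $u'$ lies in a unique level set $\XiSp^e_j$, so $u' \in \bigcup_{i \leq j} \XiSp^e_i$ immediately gives $u' \leq \indx^e(j)$. In the second case, write $u' = \lub(v_1, v_2)$ with $v_1, v_2 \in \XiSp$; the identity $\push_L(\lub(v, w)) = \max(\push_L(v), \push_L(w))$, which is immediate from $\push_L(a) = \min\{t \in L : a \leq t\}$ together with the total order on $L$, pins $\push_L(u') = p_j$ for $j = \max(j_1, j_2)$ where $v_i \in \XiSp^e_{j_i}$, and $v_1, v_2 \in \bigcup_{i \leq j} \XiSp^e_i$ then yields $u' = \lub(v_1, v_2) \leq \indx^e(j)$.

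With these ingredients the biconditional follows by direct computation. For the forward direction, suppose $u = \lift^e(r) = \indx^e(z^*)$. For any $u' \in \Pgeqr$, applying the main lemma gives $\push_L(u') = p_j$ and $u' \leq \indx^e(j)$; together with $u' \geq r$ this forces $\indx^e(j) \geq r$, hence $j \geq z^*$, establishing condition 1. In the equality case $\push_L(u') = \push_L(u)$ we have $j = z^*$, so $u' \leq \indx^e(z^*) = u$, and distinctness gives $u' < u$, establishing condition 2. For the reverse direction, condition 1 together with the observation that $\indx^e(z^*) \in \Pgeqr$ has $\push_L$-value $p_{z^*}$ (and nothing in $\Pgeqr$ has strictly smaller push, by the forward argument) pins $\push_L(u) = p_{z^*}$; the main lemma then gives $u \leq \indx^e(z^*)$, while condition 2 applied with $u' = \indx^e(z^*)$ yields $\indx^e(z^*) \leq u$, whence $u = \indx^e(z^*) = \lift^e(r)$.

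The main obstacle is the technical lemma, particularly the anchor case, where the interaction of $\lub$, $\push_L$, and the indexing $\indx^e$ must be tracked carefully; once it is in place, both directions of the biconditional collapse to straightforward monotonicity manipulations.
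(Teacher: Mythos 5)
The paper gives no proof of this lemma (it is explicitly left to the reader), so there is no official argument to compare against; judged on its own, your proof is correct and complete. The reduction $\lift^e(r)=\indx^e(z^*)$ with $z^*=\min\{z:\indx^e(z)\geq r\}$, the identity $\push_L(\lub(v,w))=\max(\push_L(v),\push_L(w))$ (valid here since $\dual_\ell(L)\in e$ forces $L$ to have positive finite slope, so $\push_L$ never takes the value $\infty$), and your key observation that every $u'\in\PCal$ with $\push_L(u')=p_j$ satisfies $u'\leq\indx^e(j)$ -- verified separately for $u'\in\XiSp$ and for $u'$ an anchor $\lub(v_1,v_2)$ -- do give both implications exactly as you describe, and this is very likely the argument the authors had in mind.

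One small remark: your parenthetical justification that $\indx^e(1)<\indx^e(2)<\cdots$ (``each term is the lub of a strictly larger subset'') only yields that the sequence is non-decreasing, since the lub of a strictly larger set need not strictly grow. Strictness, i.e.\ injectivity of $\indx^e$ on $\{1,\ldots,|\XiSp^e|\}$, is asserted in Section 3.3 of the paper and can be cited; alternatively it follows from your own push computation, since any element of $\XiSp^e_{z+1}$ pushes to $p_{z+1}>p_z=\push_L(\indx^e(z))$ and hence cannot lie below $\indx^e(z)$. In any case, only the non-decreasing chain property is actually needed to conclude $\lift^e(r)=\indx^e(z^*)$, so this does not affect the validity of the rest of the argument.
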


\begin{lemma}\label{lem:possible_lifts}
For 
$u\in \Pgeqr$, we have that $u\in \lift(r)$ if and only if the following two conditions hold:
\begin{enumerate}
\item there exists no $w\in \Pgeqr$ with 
\[w_1< u_1 \quad \text{and} \quad w_2<u_2.\]
\item there exist no pair $v,w\in \Pgeqr$ with
\[v_1< u_1,\quad v_2=u_2,\quad w_1=u_1,\quad \text{and}\quad w_2<w_2.\]
\end{enumerate}
\end{lemma}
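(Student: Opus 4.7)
The plan is to prove each direction using \cref{lem:lifts_one_cell}, which says that $\lift^e(r)=u$ iff $\push_L(u)=\min\push_L(\Pgeqr)$ and $u$ is the partial-order maximum of $\push_L^{-1}(\push_L(u))\cap\Pgeqr$, for any $L\in\L^\circ$ with $\dual_\ell(L)\in e$. I read ``$w_2<w_2$'' in condition~(2) as the evident typographical slip for ``$w_2<u_2$''.

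For the forward direction, suppose $u\in\lift(r)$ and $L$, $e$ witness this. If (1) failed, a witness $w\in\Pgeqr$ would satisfy $w<u$ strictly; a short case analysis on which coordinates of $\push_L(u)$ and $\push_L(w)$ match $u$ and $w$ (each push is on the horizontal or vertical ray from its source) forces $\push_L(w)\ne\push_L(u)$, hence $\push_L(w)<\push_L(u)$, contradicting minimality. If (2) failed, the witnesses $v,w$ give $u=\lub(v,w)$, making $u$ an anchor, so $\dual_p(u)$ is a line of $\cell(M)$. Since $\dual_\ell(L)\in e$ is not on this line, $L$ does not pass through $u$; a direct push-map computation on each side of $u$ then shows that either $v$ or $w$ pushes strictly earlier than $u$ on $L$, again contradicting minimality.

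For the backward direction, assume (1) and (2) and split on whether $u$ is an anchor. If $u$ is not an anchor, then $\dual_p(u)\notin\cell(M)$, so a generic $L\in\L^\circ$ through $u$ has $\dual_\ell(L)$ in the interior of some 2-cell $e$. Under~(1), any $x\in\Pgeqr$ with $x<u$ must share a coordinate with $u$, hence pushes to $u=\push_L(u)$; any $x\in\Pgeqr$ incomparable with or strictly greater than $u$ pushes strictly later on $L$ by a short computation. \cref{lem:lifts_one_cell} then yields $\lift^e(r)=u$. If $u$ is an anchor, condition~(2) lets me assume, WLOG by symmetry, that no element of $\Pgeqr$ lies on the horizontal ray strictly left of $u$. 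I then pick $L$ so that $\dual_\ell(L)$ sits in a 2-cell immediately below $\dual_p(u)$ in dual space, so that $L$ passes just above $u$ at $x=u_1$; then $\push_L(u)$ lies on the vertical ray above $u$, every element of $\Pgeqr$ on the vertical ray strictly below $u$ pushes to the same point (and is $<u$), and every other element of $\Pgeqr$ pushes strictly later. Applying \cref{lem:lifts_one_cell} finishes the argument.

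The main obstacle is the anchor case of the backward direction: one must show that a sufficiently small perturbation of the line through $u$ has its dual inside a genuine 2-cell of $\cell(M)$ (avoiding every other line of the arrangement), and then verify uniformly over $x\in\Pgeqr$ that the claimed push-map inequalities hold, with particular care for $x$ incomparable with $u$ where the strictness of $\push_L(x)>\push_L(u)$ depends on controlling the perturbation magnitude relative to the slope of $L$.
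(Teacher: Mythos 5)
The paper explicitly leaves this lemma (together with \cref{lem:lifts_one_cell}) as an exercise for the reader, so there is no author proof to compare against; I can only assess your argument directly. The structure—deriving everything from \cref{lem:lifts_one_cell}, and splitting the backward direction on whether $u$ is an anchor—is the natural one, and your conclusions are correct. But there is a gap in the forward direction for condition~(2). You claim that failure of~(2) produces $v,w$ with $u=\lub(v,w)$, ``making $u$ an anchor.'' However $v,w\in\Pgeqr\subseteq\PCal=\ancset\cup\XiSp$, whereas the paper defines an anchor to be the $\lub$ of a weakly incomparable pair \emph{in $\XiSp$}, not in $\PCal$. If $v$ or $w$ is itself an anchor lying outside $\XiSp$, the claim is not immediate—and without it, the key deduction that $u\notin L$ (because $\dual_\ell(L)$ lies in an open $2$-cell while $\dual_p(u)$ is an arrangement line) has no support. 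The statement is in fact true, and the missing step is short: if $v\in\ancset\setminus\XiSp$, write $v=\lub(a,b)$ with $a,b\in\XiSp$ weakly incomparable; since $\max(a_2,b_2)=v_2=u_2$, one of $a,b$ has $y$-coordinate $u_2$ and $x$-coordinate at most $v_1<u_1$, and may replace $v$. Doing the same for $w$ (with coordinates exchanged) produces an incomparable pair in $\XiSp$ whose $\lub$ is $u$, so $u$ is indeed an anchor. As for the anchor case of the backward direction, the obstacle you flag—choosing $L$ so that $\dual_\ell(L)$ lands in a single $2$-cell adjacent to $\dual_p(u)$, with the perturbation small enough that no $y$-coordinate (resp.\ $x$-coordinate) of an element of $\Pgeqr$ falls strictly between $u_2$ and the height of $L$ over $x=u_1$ (resp.\ the symmetric condition)—is routine once stated: $\Pgeqr$ is finite, and the closure of the chosen $2$-cell contains a segment of $\dual_p(u)$ along which $\dual_\ell(L)$ can be taken arbitrarily close to the desired limiting position; that part is incomplete in presentation rather than in idea.
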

\cref{fig:lift_r} illustrates the shape of the set $\lift(r)$, as described by \cref{lem:possible_lifts}.

\begin{figure}[ht]
  \begin{center}
    \begin{tikzpicture}
      \draw[purple,dotted] (0,5) -- (0,1.4) -- (6.2,1.4);
      \fill[purple] (0,1.4) circle (0.09);
      \node[purple, below left] at (0,1.4) {$r$};   
      
      \foreach \p in {(0.2,4.4),(.9,4),(.9,3.3),(1.6,3.3),(2.3,3.3),(4,2.75),(4,2.3),(5,2.3),(5.8,1.9)}
        { \fill \p circle (0.09); }
    \end{tikzpicture} 
    \end{center}
  \caption{The shape of a set $\lift(r)$, as described by \cref{lem:possible_lifts}.}
  \label{fig:lift_r}
\end{figure}
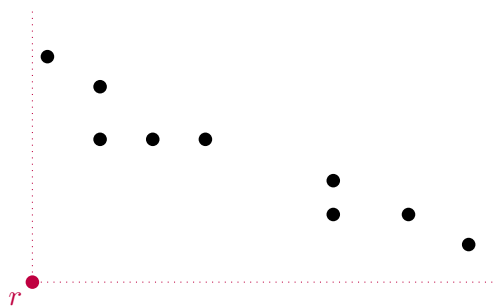

The next lemma shows that the number of anchor lines at which a given pair of points in $\sbx$ can switch or separate is at most two; it is the key step in our proof of \cref{Prop:TotalTranspositions}.

\begin{lemma}\label{Lem:Switches_And_Separations}
For $r,s\in \sbx$ incomparable,
\begin{enumerate}[(i)]
\item there is at most one anchor line $L$ at which $r$ and $s$ switch, and if such $L$ exists, then there is no anchor line at which $r$ and $s$ separate. 
\item there are at most two anchor lines at which $r$ and $s$ separate.
\end{enumerate}
\end{lemma}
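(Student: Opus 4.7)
Fix $r, s \in \sbx$ incomparable; WLOG $r_1 < s_1$ and $r_2 > s_2$, so $p := \lub(r, s) = (s_1, r_2)$. The main technical engine is the configuration characterizing switches that was extracted in the proof preceding \cref{switch_remark}: a switch at $L_\alpha := \dual_p(\alpha)$ forces (up to swapping $r$ and $s$) elements $u, v \in \PCal$ immediately left and below $\alpha$ respectively, with $r_1 \le u_1 < s_1$ and $s_2 \le v_2 < r_2$; additionally, for $\alpha$ to serve as a lift of $s$ (resp.\ $r$), we must have $s \le \alpha$ (resp.\ $r \le \alpha$), which forces $\alpha_1 = v_1 \ge s_1$ and $\alpha_2 = u_2 \ge r_2$, i.e., $\alpha \ge p$.

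For (i), my plan is to pin down $\alpha = p$ using the auxiliary sign function $f(L) := \mathrm{sign}(\push_L(r) - \push_L(s))$ on $L \in \L^\circ$. A direct case analysis on $\push_L$ shows $f(L) = 0$ iff $L$ passes through $p$, so the zero set of $f$ corresponds in the dual plane to the single line $\ell_p := \dual_p(p)$. In the switch scenario, denoting the cell below $L_\alpha$ by $e$ and that above by $e'$, one has $\lift^e(r) = u$, $\lift^e(s) = \alpha$ (hence $\lift^e(r) < \lift^e(s)$ in the push order of $\PCal^e$), and $\lift^{e'}(r) = \alpha$, $\lift^{e'}(s) = v$ (hence the opposite order). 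Since $u, v, \alpha$ all push to $\alpha$ on $L_\alpha$ itself, a careful analysis of the near-boundary push dynamics should show the switch can actually be realized only when $\alpha_1 = s_1$ and $\alpha_2 = r_2$, i.e., $\alpha = p$, giving uniqueness of the switch anchor line as $\ell_p$. For the second half of (i), if the switch at $L_p$ occurs then at every cell $e$ one of the two $\XiSp$-elements whose $\lub$ equals $p$ appears as an early template point witnessing $\lift^e(r) \ne \lift^e(s)$; consequently $E_{rs} := \{e : \lift^e(r) = \lift^e(s)\}$ is empty, ruling out separations.

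For (ii), separations occur exactly at anchor lines on the boundary of $E_{rs}$. The plan is to characterize $E_{rs}$ using \cref{lem:possible_lifts}: $e \notin E_{rs}$ iff $\PCal^e$ contains a ``witness'' $w$ with $r \le w$, $s \nleq w$ (or symmetric), pushing before $\lift^e(p)$. Such witnesses lie in one of two strips, $W_r := \PCal \cap ([r_1, s_1) \times [r_2, \infty))$ (above-left of $p$) or $W_s := \PCal \cap ([s_1, \infty) \times [s_2, r_2))$ (below-right of $p$). The main obstacle I anticipate is showing the boundary of $E_{rs}$ meets at most two anchor lines, one per strip. The argument should proceed by establishing that witnesses within a single strip push to positions whose order relative to $\lift^e(p)$ varies monotonically as the slope of $L$ varies, so the transition between ``some witness in the strip is active'' and ``none is'' occurs across at most one critical anchor line per strip, giving the desired bound of two.
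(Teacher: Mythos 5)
Your plan for part (i) rests on a claim that is false, and this is a genuine gap, not a detail to be filled in. You argue that because $\push_L(r) = \push_L(s)$ precisely when $L$ passes through $p := \lub(r,s)$, a switch can only happen at the anchor line $\dual_p(p)$. The observation about pushes is correct, but the conclusion does not follow: a switch is defined via the \emph{discrete} lift maps $\lift^e$ into $\PCal^e$, and the discretization decouples the location of the switch from $p$. Concretely, take $\XiSp = \{(0,3),(3,0)\}$, so the only anchor is $(3,3)$ and $\PCal = \{(0,3),(3,0),(3,3)\}$, and take $r = (0,2)$, $s = (2,0)$. Then $p = (2,2)$, which is not an anchor, so $\dual_p(p)$ is not even a line in $\cell(M)$. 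Yet $r$ and $s$ do switch: on the 2-cell above $\dual_p((3,3))$ one has $\PCal^e = \{(3,0),(3,3)\}$, so $\lift^e(s) = (3,0) < (3,3) = \lift^e(r)$, while on the 2-cell below, $\PCal^{e'} = \{(0,3),(3,3)\}$, so $\lift^{e'}(r) = (0,3) < (3,3) = \lift^{e'}(s)$. The switch occurs at $\dual_p((3,3))$, with $(3,3) = \lub(r',s')$ for $r' = (0,3) \in \lift(r)$ and $s' = (3,0) \in \lift(s)$; this is the paper's formula and it does not reduce to $\lub(r,s)$. Your ``near-boundary push dynamics'' step, which is supposed to pin $\alpha$ down to $p$, cannot work. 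The second half of your (i) argument inherits the same flaw: you speak of ``the two $\XiSp$-elements whose $\lub$ equals $p$,'' but $r$ and $s$ are arbitrary points of $\sbx$, so no such $\XiSp$-elements need exist.

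For (ii), your two-strip decomposition $W_r, W_s$ is in the right spirit and roughly tracks the paper's $\Rc$, $\Sc$ (which, however, are carved out of the much smaller sets $\lift(r)$, $\lift(s)$ rather than all of $\PCal$). But ``order varies monotonically in the slope of $L$, so there is at most one transition per strip'' is an assertion, not an argument; the uniqueness comes from identifying the extremal template points $r', s', q^x, q^y$ as in the paper and showing the transition lines are exactly $\dual_p(\lub(r',q^x))$ and $\dual_p(\lub(q^y,s'))$. The paper's seven-case split on whether $\Rc$, $\Sc$, $\Qc$ are empty supplies exactly this structure: case 4 gives a single switch (and excludes separations), cases 5--7 give one or two separations (and exclude switches). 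Without that level of specificity, both halves of the lemma remain open in your writeup.
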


\begin{proof}
Assume without loss of generality that $r_1 < s_1$.  Then, since $r$ and $s$ are incomparable, $r_2 > s_2$.  Let 
\begin{align*}
\Rc &= \{ (x,y)\in \lift(r) \mid x < s_1 \},\\
\Sc &= \{ (x,y) \in \lift(s) \mid y < r_2 \},\\
\Qc &=  \{(x,y)=\lift^e(r)=\lift^e(s) \text{ for some } e \in \cell(M)\}.
\end{align*}
The following observations, illustrated in \cref{fig:switches}, follow from  \cref{lem:possible_lifts}:
\begin{itemize}
\item If $\Rc$ is nonempty, there is an element $r'\in \Rc$ such that for all  $u\in \Rc$, $u_1\leq r'_1$ and $u_2\geq r'_2$.
\item Symmetrically, if $\Sc$ is nonempty, there is an element $s'\in \Rc$ such that for all  $u\in \Sc$, $u_1\geq s'_1$ and $u_2 \leq s'_2$.
\item If $\Qc$ is nonempty, there is an element $q^x\in \Qc$ such that for all $u\in \Qc$, $u_1\geq q^x_1$ and $u_2 \leq q^x_2$.
\item Symmetrically, if $\Qc$ is nonempty, there is an element $q^y\in \Qc$ such that for all $u\in \Qc$, $u_1\leq q^y_1$ and $u_2 \geq r_2$.
\end{itemize}
Clearly, $r',s',q^x$, and $q^y$ are unique, when they exist.  
Using \cref{lem:lifts_one_cell}, it is straightforward to check that $u\in \Qc$ if and only if $u\in \lift(r)\intrs\lift(s)$ and one of the following is true:
\begin{enumerate}
\item $u$ is incomparable to every element of $\Rc\union\Sc$.  
\item $\Rc$ is non-empty, $u_2=r'_2$, and there is no $v\in \lift(s)$ with $v_1=u_1$ and $v_2<u_2$, 
\item $\Sc$ is non-empty, $u_1=s'_1$, and there is no $v\in \lift(r)$ with $v_1<u_1$ and $v_2=u_2.$
\end{enumerate}
See \cref{fig:switches} for an illustration of $\Rc$, $\Sc$, and $\Qc$.

To finish the proof of \cref{Lem:Switches_And_Separations}, we consider seven cases.  For each, we explicitly describe the lines where $r$ and $s$ either switch or separate.  The verification of the claimed behavior in each case, which uses \cref{lem:lifts_one_cell} and the observations above, is left to the reader.  \cref{fig:switches} illustrates case 7.

\begin{sloppypar}
\begin{enumerate}
	\item $\Rc$ and $\Sc$ empty.  $\lift^e(r)=\lift^e(s)$ for every 2-cell $e\in\cell(M)$.  Therefore, $r$ and $s$ never switch or separate.
        \item $\Rc$ nonempty, $\Sc$ and $\Qc$ empty.  $\lift^e(r)< \lift^e(s)$ for every 2-cell $e\in\cell(M)$.  Again, no switches or separations.
        \item $\Sc$ nonempty, $\Rc$ and $\Qc$ empty.  Symmetric to the above, no switches or separations.
	\item $\Rc$ and $\Sc$ nonempty, $\Qc$ empty.  $\lift^e(r)< \lift^e(s)$ whenever $e$ lies below $L=\dual_p(\lub(r',s'))$, and $\lift^e(s)< \lift^e(r)$ whenever $e$ lies above $L$.  Hence $r$ and $s$ switch at $L$.
	\item $\Rc$ and $\Qc$ nonempty, $\Sc$ empty.  $\lift^e(r)< \lift^e(s)$ whenever $e$ lies below $L=\dual_p(\lub(r',q_x))$, and $\lift^e(r)=\lift^e(s)$ whenever $e$ lies above $L$.  Hence $r$ and $s$ separate at $L$.
	\item $\Sc$ and $\Qc$ nonempty, $\Rc$ empty.  Symmetric to the above, $r$ and $s$ separate at $\dual_p(\lub(q_y,s'))$.
	\item $\Rc$, $\Sc$, and $\Qc$ all nonempty.  $\lift^e(r)< \lift^e(s)$ whenever $e$ lies below $L=\dual_p(\lub(r',q_x))$; $\lift^e(r)=\lift^e(s)$ whenever $e$ lies above $L$ and below $L'=\dual_p(\lub(q_y,s'))$; and $\lift^e(s)<\lift^e(r)$ whenever $e$ lies above $L'$.  Hence $r$ and $s$ separate at $L$ and $L'$.\qedhere
\end{enumerate}
\end{sloppypar}
\end{proof}	
	
\begin{figure}[ht]
  \begin{center}
    \begin{tikzpicture}
      \draw[purple,dotted] (0,5) -- (0,1.4) -- (8,1.4);
      \fill[purple] (0,1.4) circle (0.09);
      \node[purple, below left] at (0,1.4) {$r$};   
      
      \draw[purple,dotted] (3,5) -- (3,0) -- (8,0);
      \fill[purple] (3,0) circle (0.09);
      \node[purple, below left] at (3,0) {$s$}; 
      
      \draw[black!50,decorate,decoration={brace,amplitude=8pt}] (7.5,1.4) -- (7.5,0.05);
      \node[black!50] at (8,0.95) {$\Sc$};
      \draw[black!50,decorate,decoration={brace,amplitude=8pt}] (0.05,4.7) -- (2.9,4.7);
      \node[black!50] at (1.45,5.2) {$\Rc$};
      \node[below left,black!50] at (6.4,3.05) {$\Qc$};
      
      \draw[rounded corners,black!50](6.4,1.60) -- (6.4,3.05) -- (3.4,3.05)--(3.4,1.60)--cycle;
      \draw[dashed,blue!50] (2.3,3.3) -- (4,3.3) -- (4,2.75);
      \draw[dashed,blue!50] (5.8,1.9) -- (5.8,1.15);
      
      \foreach \p in {(0.2,4.4),(.9,4),(.9,3.3),(1.6,3.3),(2.3,3.3),(3.2,4),(4,2.75),(4,2.3),(5,2.3),(5.8,1.9),(5.8,1.15),(6.5,0.5),(7,0.5)}
        { \fill \p circle (0.09); }
      \draw[fill=white] (4,3.3) circle (0.09);
      \node[below right] at (2.3,3.3) {$r'$};
      \node[left] at (5.8,1.15) {$s'$};
      \node[left] at (4,2.75) {$q^x$};
       \node[right] at (5.85,1.9) {$q^y$};
    \end{tikzpicture} 
    \end{center}
  \caption{An illustration of the case where $\Rc$, $\Sc$, and $\Qc$ are each non-empty (case 7).  Points in $\lift(r)\union \lift(s)$ are drawn as black dots. Observe that $r$ and $s$ separate at $\dual_p(\lub(r',q_x))$ and $\dual_p(\lub(q_y,s'))$, and at no other anchor line do $r$ and $s$ either switch or separate.}
  \label{fig:switches}
\end{figure}
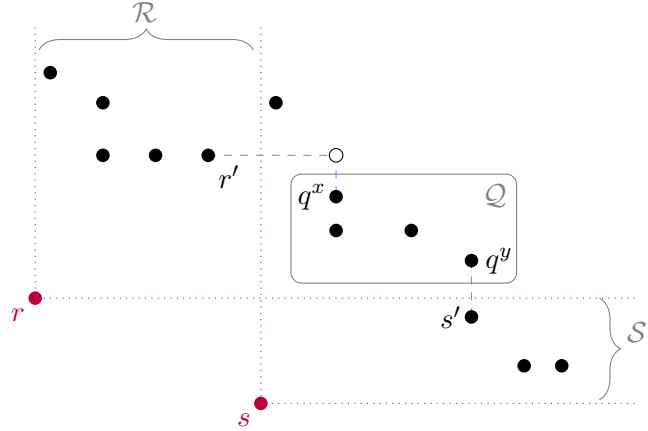

\begin{proof}[Proof of \cref{Prop:TotalTranspositions}]
Fix $j\in \{1,2\}$, and let $k\ne k'\in [m_j]$.  First, we note that if $\gr_j(k)$ and $\gr_j(k')$ are comparable, then as we pass from cell $e_{i-1}$ to  cell $e_i$, our algorithm for computing barcode templates never performs an insertion-sort transposition of the values $k$ and $k'$ in $\sigInv{j}{}$.  This is because our initialization procedure at cell $e_0$, described in \cref{Sec:Initial_Cell} and \cref{sec:frontier_sweep_details}, chooses $\sigInv{j}{}$ such that if $\gr_j(k)<\gr_j(k')$, then $\sig{j}{k}<\sig{j}{k'}$.  Since $\lift^e(k)\leq \lift^e(k')$ for all 2-cells $e$, there thus is never any need to swap $k$ and $k'$.  

Therefore, as we pass from cell $e_{i-1}$ to  cell $e_i$, our algorithm performs an insertion-sort transposition of the values $k$ and $k'$ in $\sigInv{j}{}$ only if $\gr_j(k)$ and $\gr_j(k')$ either switch or separate at $(e_{i-1},e_i)$.  

Clearly, the number of pairs $k,k'\in[m_j]$ such that $\gr_j(k)$ and $\gr_j(k')$ either switch or separate at \emph{any} anchor line is less than $m_j^2/2$, the total number of pairs.  The path $\pth$, constructed via the minimum spanning tree construction in \cref{Sec:Computing_The_Path}, crosses each anchor line at most $2\kappa$ times.  By \cref{Lem:Switches_And_Separations} then, for each pair $k,k'\in [m_j]$ , the insertion-sort component of our algorithm performs a total of at most $4\kappa$ transpositions of that pair in $\sigInv{j}{}$.  Hence, the total number of transpositions performed by the algorithm altogether is at most $2(m_1^2+m_2^2)\kappa<2m^2 \kappa $.
\end{proof}

\section{Speeding up the Computation of the Augmented Arrangement}\label{Sec:Speedups}
In this section, we describe several simple, practical strategies to speed up the runtime of our computation of $\S(M)$.  Used together, these strategies allow us to the compute augmented arrangements of the persistent homology modules of much larger datasets than would otherwise be possible.   

\subsection{Persistence Computation from Scratch When \texorpdfstring{$RU$}{RU}-Updates are too Slow}\label{Sec:Barcodes_Scratch}

\paragraph{Three Options for Computing a Barcode Template}
While an update to an $RU$-decomposition involving few transpositions is very fast in practice, an update to an $RU$-decomposition requiring many transpositions can be quite slow; when many transpositions are required, it is sometimes much faster to simply recompute the $RU$-decomposition from scratch using the standard persistence algorithm.  In our setting, the practical performance of our algorithm can be greatly improved if for consecutive 2-cells $e_{i-1},e_{i}\in \Gamma$ with the edge weight $w(e_{i-1},e_{i})$ greater than some suitably chosen threshold $t$, we simply compute the $RU$-decomposition of $\Phi^i$ from scratch, directly from $D^i_1$ and $D^i_2$.

Moreover, we can obtain significant additional speedups by avoiding the computation of the full $RU$-decomposition of $\Phi^i$ altogether at some cells $e_i$.  To explain this, we first note that to obtain $\P^{e_i}$ via \cref{BarcodeTemplateFormula}, we do not need the full $RU$-decomposition $\Phi^i$, but only $\PE(\Phi^i)=(\pairs(\Phi^i),\ess(\Phi^i))$; in particular we do not need $U^i_1$ and $U^i_2$.  
The algorithm for computing barcode templates described in \cref{Sec:Computing_Additional_Data_at_Faces} maintains the full $RU$-decomposition of each $\Phi^i$ because the vineyard algorithm requires this.  But if we are willing to compute $\PE(\Phi^{i+1})$ from scratch, then it is not necessary to compute the full $RU$-decomposition of $\Phi^{i}$; it suffices to compute $\PE(\Phi^{i})$.  Further, if in this case we have that $e_{i}=e_j$ for some $j<i$, then we do not even need to compute $\PE(\Phi^{i})$ at all at cell $e_i$, since we have already done so at an earlier step.  

In recent years, several algorithms for computing barcodes have been introduced which are much faster than the standard persistence algorithm \cite{otter2015roadmap,bauer2014phat,bauer2014clear}.  For example, a few such algorithms are implemented in the software library PHAT \cite{bauer2014phat}.  
Given a \fir $\Phi$ as input, these algorithms compute $\PE(\Phi)$, but do not compute the full $RU$-decomposition of $\Phi$.\footnote{However, some fast algorithms for persistence computation can be readily adapted to compute $R_1$, $R_2$, $U_1^{-1}$, and $U_2^{-1}$.  For example, as explained to us by Ulrich Bauer, this is true for the ``twist" variant of the standard persistence algorithm \cite{chen2011persistent}.}

Let us restrict our attention a single such algorithm, say, the \emph{clear and compress algorithm} implemented in PHAT \cite{bauer2014clear}.

To compute $\PE(\Phi^i)$, then, we have three options available to us:
\begin{enumerate}[(A)]
\item Use the clear and compress algorithm if $e_i\ne e_j$ for all $j<i$; do nothing if $e_i=e_j$ for some $j<i$.
\item Compute the full $RU$-decomposition of $\Phi^i$ from scratch using the standard persistence algorithm.
\item Use vineyard updates.  This option is only available if we chose option $B$ or $C$ at cell $e_{i-1}$, so that the full $RU$-decomposition of $\Phi^{i-1}$ was computed.
\end{enumerate}

Clearly, there is a tradeoff between options $\Arm$ and $\Brm$: Option $\Arm$ is much faster, but choosing option $\Arm$ at cell $e_i$ precludes the use of option $\Crm$ at cell $e_{i+1}$.  How, then,  do we choose between these three options at each cell $e_i$?  We formulate this problem as a discrete optimization problem, which can solved efficiently by reduction to a min-cut problem.

\paragraph{Estimating Runtimes of the Different Options}
Our formulation of the problem requires us to first estimate the respective runtimes $c_i(\Arm)$, $c_i(\Brm)$, and $c_i(\Crm)$ of options $\Arm$, $\Brm$, and $\Crm$ at each cell $e_i$ in the path $\Gamma=e_0,e_1,\ldots,e_w$.  We will describe a simple strategy for this here, and then explain below how to modify our approach to correct for a drawback of the strategy.  

We take $c_i(\Arm)=0$ if $e_i=e_j$ for some $j<i$, and otherwise we take $c_i(\Arm)$ to be some constant $c(\Arm)$ independent of $i$.  Similarly, we take $c_i(\Brm)=c(\Brm)$ to be independent of $i$.    
To compute $c(\Arm)$, we compute $\PE(\Phi^0)$ using option $\Arm$, and set $c(\Arm)$ to be the runtime of this computation.  Similarly, to compute $c(\Brm)$, we compute the the full $RU$-decomposition of $\Phi^0$ from scratch and take $c(\Brm)$ to be the runtime.  

We set $c_0(\Crm)$ arbitrarily, say $c_0(\Crm)=0$. To compute $c_i(\Arm)$ for each $i\geq 1$, we perform several thousand random vineyard updates to the $RU$-decomposition of $\Phi^0$.  Using timing data from these computations, we compute, for $j=1,2$, the average runtime $c^{\vine}_j$ of an update to the $RU$-decomposition corresponding to a transposition of adjacent elements in $[m_j]$.  Letting $L$ denote the anchor line containing the shared boundary of $e_{i-1}$ and $e_i$, and recalling the notation of \cref{Sec:Choosing_Edge_Weights}, we take 
\[c_i:=c^{\vine}_1(\switch_L(\gr_1)+\frac{1}{4}\sep_L(\gr_1))+c^{\vine}_2(\switch_L(\gr_2)+\frac{1}{4}\sep_L(\gr_2)).\]  
Some motivation for this choice of $c_i$ is provided by \cref{switch_remark}.

\paragraph{The Optimization Problem}
To decide between options $\Arm$, $\Brm$ and $\Crm$ at each cell $e_i\in \Gamma$, we solve the following optimization problem:
\begin{align*}
   \text{minimize: } & \sum_{i=0}^w c_i(X_i) \\
   \text{subject to: } 
                              &X_i\in \{\Arm,\Brm,\Crm\},\\
                              &\text{$X_i=\Arm \implies X_{i+1}\ne \Crm$},\\
                              &X_0\ne C.
\end{align*}
Clearly, this problem is equivalent to the integer linear program (ILP):
\begin{align*}
   \text{minimize: } &\sum_{i=0}^w \big(c_i(\Arm)x_{i}+c_i(\Brm)y_{i}+ c_i(\Crm)z_{i}\big) \\
   \text{subject to: } 
                  & x_i,y_i,z_i  \in \{0,1\}\\
                  &x_i+y_i+z_i=1\\
                   & x_i+z_{i+1}\leq 1,\\
                  & z_0=0.
\end{align*}
Using the constraints $x_i+y_i+z_i=1$, we can eliminate the variables $y_i$ from this ILP to obtain an equivalent ILP with a simpler set of constraints:
\begin{align*}
\text{minimize: } &\sum_{i=0}^w \big((c_i(\Arm)-c_i(\Brm))x_{i}+(c_i(\Crm)-c_i(\Brm))z_{i}\big) \\
\text{subject to: }  &x_i,z_i  \in \{0,1\}\\ 
                            &x_i+z_{i+1}\leq 1,\\                          
                            &z_0=0.                            
\end{align*}
The constraint matrix associated to the latter ILP is of a standard form, well known to be totally modular \cite{schrijver1998theory}.  While an ILP with totally unimodular constraint matrix can always be solved directly via linear programming relaxation, it is often the case that such an ILP can be cast as a network flow problem, in which case we can take advantage of very efficient specialized algorithms.  

In fact, as explained to us by John Carlsson, the simplified ILP above can be cast as the problem of finding a minimum cut in a network: First, the ILP can be cast as a maximum-weight independent set problem in a bipartite graph with non-negative vertex weights.  In any graph, the complement of an independent set is a vertex cover and vice versa, so the latter problem is in turn equivalent to a minimum-cost vertex cover problem in a bipartite graph.  It is well known that such a problem can be solved by computing a minimum cut in a flow network \cite[Section 8.4.3.1]{atallah1998algorithms}.

\paragraph{Dynamic Updates to our Estimates of Runtime Cost}
As mentioned above, there is a drawback to the approach to barcode template computation we have proposed here: $c(\Arm)$ and $c(\Brm)$ may not be very good estimates of the respective average costs of option $\Arm$ and option $\Brm$; after all, our estimates $c(A)$ and $c(B)$ are computed using very little data.  

Here is one way to correct for this: We first solve the optimization problem above for just the first few cells (say $5\%$) in the path $\Gamma$.  Using the solution, we then compute the barcode template for each of these cells.  As we do this, we record the runtime of the computation at each cell.  We next update the value of $c(\Arm)$ to the average run time of all computations performed using option $\Arm$ thus far.  We also update $c(\Brm)$, $c_1^{\vine}$, and $c_2^{\vine}$ in the analogous way.  We then use these updated values as input to the optimization problem for the next $5\%$ of cells in $\Gamma$.  We continue in this way until the estimates of $c(\Arm)$, $c(\Brm)$, $c_1^{\vine}$, and $c_2^{\vine}$ have stabilized.  Finally, we solve the optimization problem for all of the remaining cells in $\Gamma$ and use the solution to compute the remaining barcode templates.

\subsection{Coarsening of Persistence Modules}\label{Sec:Coarsening}
We have seen that size of the augmented arrangement $\S(M)$ depends quadratically on the coarseness $\kappa$ of $M$, and that computing $\S(M)$ requires $O(m^3 \kappa+(m+\log \kappa)\kappa^2)$ elementary operations.

Thus, to keep our computations small, we typically want to limit the size of $\kappa$ by coarsening our module.  As we now explain, doing this quite simple.  A similar coarsening scheme is mentioned in \cite{carlsson2009computing}.

Let $\G=(\G_1,\G_2,\ldots,\G_n):\Z^2\to \R^2$ be any grid function, as defined in \cref{Sec:Continuous_Extensions}.  
$\G$ extends to a functor $\ZCat^n\to \RCat^n$, which we also denote by $\G$.  For $M$ a 2-D persistence module, let $M^\G$ be a continuous extension of $M\circ \G$.  In view of \cref{Prop:BettiNumsContinuousExtensions}, the coarseness of the grid $\G$ controls the coarseness $\kappa$ of the module $M^\G$.

Let $d_I$ denote the multidimensional interleaving distance, as defined in \cite{lesnick2014theory}.  As explained there, $d_I$ is a particularly well-behaved metric on persistence modules.  The following proposition, whose easy proof we omit, makes precise the intuitive idea that a small amount of coarsening leads to a small change in our persistence module:

\begin{proposition}
If $|\G_j(z)-\G_j(z+1)|\leq \delta$ for $i=1,2$ and all $z\in \Z$, then $d_I(M,M^\G)\leq \delta$.   
\end{proposition}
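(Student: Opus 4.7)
The plan is to exhibit a $\delta$-interleaving between $M$ and $M^\G$ explicitly, with structure maps built out of the internal morphisms of $M$ itself. The key geometric observation, which drives everything, is that under the hypothesis $|\G_j(z)-\G_j(z+1)|\leq\delta$ for $j=1,2$ and all $z\in \Z$, the floor map satisfies
\[ \fl_\G(a)\leq a\leq \fl_\G\bigl(a+(\delta,\delta)\bigr)\qquad \text{for every } a\in \R^2, \]
where $\leq$ denotes the product order. The left inequality is immediate from the definition of $\fl_\G$; for the right inequality, one checks componentwise: if $\G_j(z)\leq a_j+\delta<\G_j(z+1)$ then $\fl_{\G_j}(a_j+\delta)=\G_j(z)\geq \G_j(z+1)-\delta>a_j$.

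Next, I would unpack the definition of $\CoEx_\G$ to get the convenient identity $M^\G_a = M_{\fl_\G(a)}$, with transition maps $M^\G(a,b)$ equal to $M(\fl_\G(a),\fl_\G(b))$. Using the displayed inequality, I can then define the interleaving maps:
\begin{align*}
\phi_a &:= M\bigl(a,\fl_\G(a+(\delta,\delta))\bigr) : M_a \to M^\G_{a+(\delta,\delta)}, \\
\psi_a &:= M\bigl(\fl_\G(a),a+(\delta,\delta)\bigr) : M^\G_a \to M_{a+(\delta,\delta)}.
\end{align*}
Both maps are well defined because of the geometric observation above.

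The remaining verifications are all instances of the functoriality of $M$. Naturality of $\phi$ and $\psi$ in $a$ follows because, in each of the two relevant commutative squares, both composite paths reduce to a single transition map of $M$ between the same two indices. For the interleaving identities, the composite $\psi_{a+(\delta,\delta)}\circ \phi_a$ is
\[ M\bigl(\fl_\G(a+(\delta,\delta)),a+2(\delta,\delta)\bigr)\circ M\bigl(a,\fl_\G(a+(\delta,\delta))\bigr)=M\bigl(a,a+2(\delta,\delta)\bigr), \]
which is the required shift map on $M$; similarly, $\phi_{a+(\delta,\delta)}\circ \psi_a = M(\fl_\G(a),\fl_\G(a+2(\delta,\delta)))$, which by the identification $M^\G(a,b)=M(\fl_\G(a),\fl_\G(b))$ is exactly the shift map $M^\G(a,a+2(\delta,\delta))$. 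Hence $(\phi,\psi)$ is a $\delta$-interleaving and $d_I(M,M^\G)\leq \delta$.

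There is no real obstacle; this is essentially a bookkeeping argument. The only subtle point is the right-hand inequality $a\leq \fl_\G(a+(\delta,\delta))$, which is what forces us to use the shift by $\delta$ (rather than, say, trying to map $M_a$ directly to $M^\G_a$), and it uses the grid-spacing hypothesis in an essential way. Everything else is formal manipulation of transition maps in $M$.
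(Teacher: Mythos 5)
Your proof is correct and is the natural argument the authors had in mind when calling the proof ``easy'' and omitting it. The key inequality $\fl_\G(a)\leq a\leq \fl_\G(a+(\delta,\delta))$, the identification $M^\G_a=M_{\fl_\G(a)}$ with $M^\G(a,b)=M(\fl_\G(a),\fl_\G(b))$, and the resulting definition of the interleaving maps as transition maps of $M$ all check out, and the naturality and interleaving identities collapse by functoriality exactly as you describe.
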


The ``external stability theorem" of Landi \cite{landi2014rank}, mentioned earlier in \cref{sec:rank_inv_fibered_barcodes},  shows that if 2-D persistence modules $M$ and $N$ are close in the interleaving distance, then the fibered barcodes $\B{M}$ and $\B{N}$ will be close, in a precise sense.  This justifies the use of coarsening in conjunction with our visualization paradigm.  

\paragraph{Coarsening Free Implicit Representations}
As explained in \cref{Sec:Motivativing_Free_Implicit_Reps}, in practice, we typically have access to $M$ via a \fir $\Phi=(\gr_1,\gr_2,\MatLeft,\MatRight)$ for $M$.  Since our algorithm for computing an augmented arrangement takes a \fir as input, to compute $\S(M^\G)$, we want to first construct a \fir $\Phi^\G$ of $M^\G$ from $\Phi$.  

Let $\ceil^\G:\R^n\to \G$ be the function which takes each $a\in \R^n$ to the minimal $z\in \G$ with $a\leq z$.
 
Define $\Phi^\G=(\ceil^\G\circ \gr_1,\ceil^\G\circ \gr_2,\MatLeft,\MatRight)$.  We leave the proof of the following to the reader:

\begin{proposition}  
$H(\Phi_G)\simeq M^\G$.
\end{proposition}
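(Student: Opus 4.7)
The plan is to show that the coarsening operation on the \fir $\Phi$ corresponds, at the level of the underlying chain complex, to first restricting along $\G$ and then applying $\CoEx_\G$. Since both functors are exact and hence commute with homology, the identity $H(\Phi^\G)\cong M^\G$ will follow from the fact that $M\cong H(\Phi)$.

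The key lemma I would establish first is that for any finitely generated free $n$-module $F=\free[\W]$ with $\W=\{(w_k,g_k)\}$, the coarsened free module $\free[\W^\G]$, with $\W^\G=\{(w_k,\ceil^\G(g_k))\}$, is naturally isomorphic to $\CoEx_\G(F\circ \G)$. The core point is that $\ceil^\G(g_k)\leq a$ if and only if $g_k\leq \fl_\G(a)$, which is immediate from the definition of $\fl_\G$ and monotonicity of $\ceil^\G$. This ensures that both modules carry, at each grade $a\in \R^n$, the same canonical $K$-basis of generators $\{w_k : g_k\leq \fl_\G(a)\}$, and that the internal maps match. Next I would check that this identification is natural in morphisms: if $\partial:F\to F'$ is represented by a matrix $D$ with entries in $K$, then monotonicity of $\ceil^\G$ guarantees that $D$ still represents a morphism with respect to the coarsened grades, and the induced map on $\free[\W^\G]$ coincides under the above isomorphism with $\CoEx_\G(\partial\circ \G)$. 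Applied to $\partial_1$ and $\partial_2$, together with the observation that $\ceil^\G$ commutes with greatest lower bounds (so the constant grade $\gr_0$ coarsens correctly), this shows that $\Phi^\G$ is precisely the \fir whose chain complex is $\CoEx_\G(F_\bullet\circ \G)$, where $F_\bullet$ is the chain complex of free modules associated to $\Phi$.

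From here the proof is a short diagram chase. Pointwise evaluation is exact, so $(-)\circ \G$ is exact; and exactness of $\CoEx_\G$ was already noted in the proof of \cref{Prop:BettiNumsContinuousExtensions}. Both functors thus commute with homology, yielding
\[H(\Phi^\G)\cong H(\CoEx_\G(F_\bullet\circ \G))\cong \CoEx_\G(H(F_\bullet\circ \G))\cong \CoEx_\G(H(F_\bullet)\circ \G)\cong \CoEx_\G(M\circ \G)=M^\G,\]
as desired. The main obstacle is the second step above: carefully verifying that the same matrix $D_j$ represents both the coarsened differential and the continuous extension of the discretized differential. This is a routine but slightly delicate unpacking of the matrix-representation conventions of \cref{Sec:Free_Modules_Presentations}, where one must check that the $K$-coefficients depend only on the combinatorial comparison $\gr(w'_i)\leq \gr(w_j)$, not on the specific grade values, so that replacing $\gr_j$ by $\ceil^\G\circ \gr_j$ does not alter the map beyond the desired coarsening.
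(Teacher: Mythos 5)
Your proof is correct, and since the paper explicitly leaves this proposition to the reader, it supplies an argument the paper omits. The decomposition into (a) a key lemma identifying $\free[\W^\G]$ with $\CoEx_\G(\free[\W]\circ\G)$ via the equivalence $\ceil^\G(g)\leq a \Leftrightarrow g\leq\fl_\G(a)$, (b) naturality in the matrix $D_j$ using monotonicity of $\ceil^\G$, and (c) the exactness-based diagram chase is sound. Your handling of the grade-$0$ generators is also right: since $\ceil^\G$ acts coordinate-wise and each $\ceil^{\G_i}$ is non-decreasing, $\ceil^\G$ commutes with coordinate-wise minima, so the constant function to the greatest lower bound of $\im\gr_1$ does coarsen to the constant function to the greatest lower bound of $\im(\ceil^\G\circ\gr_1)$, exactly as you claim. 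One remark: the functorial machinery you invoke (exactness of both $(-)\circ\G$ and $\CoEx_\G$) is more than strictly needed, since your key lemma already shows that, grade-by-grade, the chain complex of $\Phi^\G$ at $a$ is literally the chain complex of $\Phi$ at $\fl_\G(a)$, and the structure maps restrict compatibly; from that a direct pointwise computation $H(\Phi^\G)_a = H(\Phi)_{\fl_\G(a)} = M_{\fl_\G(a)} = M^\G_a$, together with the same identification on internal maps, gives the isomorphism without appealing to $\CoEx_\G$ preserving exactness. But the version you wrote is valid and arguably cleaner to present.
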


Thus, to obtain a \fir of a coarsening of $M$, it suffices to simply coarsen the grade functions in our \fir of $M$.

\subsection{Parallelization}\label{Sec:Parallel_Comp}
The problem of computing the barcode templates $\P^e$ is embarrassingly parallelizable.  Here is one very simple parallelization scheme: Given $l^2$ processors, where $l$ is less than or equal to the number of anchors, we can choose the $l$ anchor lines $L$ with the largest values of $w(L)$.  These lines divide $\cell(M)$ into at most $l^2$ polygonal cells $\{C_k\}$ with disjoint interiors, and the remaining anchor lines induce a line arrangement $\cell(M)_k$ on each $C_k$.  On processor $k$, we can run our main serial algorithm, described above, to compute the barcode templates at each 2-cell of $\cell(M)_k$.  

For this, we need to make just one modification to the algorithm: When choosing our path $\pth$ through the 2-cells of $\cell(M)_k$, we generally cannot choose our initial cell $e$ to be the cell $e_0$ described in \cref{Sec:Computing_The_Path}, since $e_0$ may not be be contained in $\cell(M)_k$.  Instead, we choose the initial cell $e$ arbitrarily.

This means we cannot use the approach of \cref{Sec:Initial_Cell} to initialize the data structures $\sig{j}{}$, $\sigInv{j}{}$, $\u{j}{}$, and $\partialLevelSet{j}{\cdot}$ at cell $e$.  One way to initialize these data structures is to chose an arbitrary affine line $L$ with $\dual_\ell(L)\in e$, and consider the behavior of the map $\push_L$ on $\XiSp\cup \im(\gr_1) \cup \im(\gr_2)$, using exact arithmetic where necessary.

\section{Preliminary Runtime Results}\label{Sec:Runtimes}
We now present runtimes for the computation of augmented arrangements arising from synthetic data.  

We emphasize that these computational results are preliminary, as our implementation of RIVET does not yet take advantage of some key optimizations.  First, the code that produced these results employs a highly simplified variant of the scheme detailed in \cref{Sec:Barcodes_Scratch} for computing barcode templates; this variant chooses only between options (B) and (C) at each edge crossing, which is less efficient than what is proposed in \cref{Sec:Barcodes_Scratch}.  Secondly, this code stores the columns of our sparse matrices using linked lists; it is known that a smarter choice of data structure for storing columns can lead to major speedups in persistence computation \cite{bauer2014phat}.  Finally, as mentioned in \cref{Sec:Intro_Computation}, our current implementation runs only on a single processing core.  We expect to see substantial speedups after parallelizing the computation of barcode templates as proposed in \cref{Sec:Parallel_Comp}.

Our computations were run on a single (slow) 800 MHz core in a 32-core server with 96 GB of RAM. 
However, for the computations reported here, only a fraction of the memory was required.
For example, RIVET used approximately 9.3 GB of RAM for our largest computation.

\paragraph{Noisy Circle Data}
Data sets we consider are point clouds sampled with noise from an annulus, such as the center point cloud in \cref{fig:barcode_problems}.
Specifically, 90\% of the points in each data set $X$ are sampled randomly from a thick annulus in a plane, and 10\% are sampled randomly from a square containing the annulus.
We define a codensity function $\gamma:X\to \R$ by taking $\gamma(p)$ to be equal to the number of points of $X$ within some fixed distance to $p$.  We then construct the Vietoris-Rips bifiltration $\F:=\Rips(\gamma)$, described in \cref{Sec:Multi_D_PH_Intro}, taking the metric on $X$ to be the Euclidean distance, with the scale parameter for the Vietoris-Rips complexes capped at a value slightly larger than the inner diameter of the annulus.

\paragraph{Computing the Graded Betti Numbers}
\Cref{Table:Circle_Betti} displays the average runtimes for computing the graded Betti numbers $\xi_0$, $\xi_1$, and $\xi_2$ of $H_i(\F)$, for $i\in\{1,2\}$.
Each row gives the averages from three point clouds of the specified size.
For example, we generated three point clouds of 100 points, and the average number of 1-simplices in the resulting bifiltrations was 3,820, so computing $H_0$ homology required working with a bifiltration of average size 3,920 simplices.

\begin{table}[ht]\footnotesize
	\begin{center}
	\caption{Average runtimes for computing the bigraded Betti numbers of the noisy circle data}
	\label{Table:Circle_Betti}
	\begin{tabular}{ccrr}
		\toprule
		& Points & Simplices & Runtime (sec.) \\
		\midrule
		\multirow{4}{*}{$H_0$} & 100 & 3,920 & 0.11 \\
		& 200 &	15,719 & 0.70 \\
		& 300 &	35,324 & 2.38 \\
		& 400 &	62,415 & 3.99 \\
		\midrule
		\multirow{4}{*}{$H_1$} & 100 & 91,876 & 4.01 \\
		& 200 &	755,211 & 59.4 \\
		& 300 &	2,560,718 & 264 \\
		& 400 &	6,052,584 & 790 \\
		\bottomrule
	\end{tabular}
	\end{center}
\end{table}

\paragraph{Building the Augmented Arrangement}
\Cref{Table:Circle_H0} displays the average runtimes (in seconds) to build $\S(H_0(\F))$.
As before, each row gives the averages from three point clouds of the specified size.
The average runtimes for computing $\S(H_0(\F))$ for each of four different coarsenings are displayed in the table.
Similarly, \cref{Table:Circle_H1} displays the average runtimes (in seconds) to build $\S(H_1(\F))$.

\begin{table}[ht]\footnotesize
	\begin{center}
	\caption{Runtimes for computing the augmented arrangement for $0^{\mathrm{th}}$ homology of the noisy circle data}
	\label{Table:Circle_H0}
	\begin{tabular}{crrrrr}
		\toprule
		& & \multicolumn{4}{c}{Runtimes (seconds)} \\
		\cmidrule(l){3-6}
		Points & Simplices & Bins: $5 \times 5$ & $10 \times 10$ & $15 \times 15$ & $20 \times 20$ \\
		\midrule
		100 &	3,920 &	0.17 &	0.39 &	0.97 &	1.51 \\
		200 &	15,719 &	0.40 &	7.90 &	21.9 &	46.1 \\
		300 &	35,324 &	0.46 &	10.3 &	46.7 &	113 \\
		400 &	62,415 &	1.25 &	22.0 &	180 &	637 \\
		\bottomrule
	\end{tabular}
	\end{center}
\end{table}

\begin{table}[H]\footnotesize
	\begin{center}
	\caption{Runtimes for computing the augmented arrangement for $1^{\mathrm{st}}$ homology of the noisy circle data}
	\label{Table:Circle_H1}
	\begin{tabular}{crrrrr}
		\toprule
		& & \multicolumn{4}{c}{Runtimes (seconds)} \\
		\cmidrule(l){3-6}
		Points & Simplices & Bins: $5 \times 5$ & $10 \times 10$ & $15 \times 15$ & $20 \times 20$ \\
		\midrule
		100 &	91,876 &	0.60 &	1.33 &	2.35 &	3.61 \\
		200 &	755,211 &	13.1 &	211 &	1,012 &	3,368 \\
		300 &	2,560,718 &	39.9 &	374 &	3,364 &	7,510 \\
		400 &	6,052,584 &	228 &	6,893 &	18,346 &	59,026 \\
		\bottomrule
	\end{tabular}
	\end{center}
\end{table}


\section{Conclusion}

In this paper, we have introduced RIVET, a practical tool for visualization of 2-D persistence modules.  RIVET provides an interactive visualization of the barcodes of 1-D affine slices of a 2-D persistence module, as well as visualizations of the dimension function and bigraded Betti numbers of the module.  We have presented a mathematical theory for our visualization paradigm, centered around a novel data structure called an augmented arrangement.  We have also introduced and analyzed an algorithm for computing augmented arrangements, and described several strategies for improving the runtime of this algorithm in practice.  

In addition, we have presented timing data from preliminary experiments on the computation of augmented arrangements.  Though we have yet to incorporate several key optimizations into our code, the results demonstrate that our current implementation already scales well enough to be used to study bifitrations with millions of simplices.  With more implementation work, we expect RIVET to  scale well enough to be used in many of the same settings where 1-D persistence is currently used for exploratory data analysis.

From here, there are several natural directions to pursue.  Beyond continuing to improve our implementation of RIVET, we would like to:
\begin{itemize}
\item Apply RIVET to the exploratory analysis of scientific data.
\item Develop statistical foundations for our data analysis methodology.
\item Adapt the RIVET paradigm in the setting of 0-D homology to develop a tool for hierarchical clustering and interactive visualization of bidendrograms \cite{carlsson2010multiparameter}.
\item Extend the RIVET methodology to other generalized persistence settings, such cosheaves of vector spaces over $\R^2$ \cite{curry2013sheaves}, or cosheaves of 1-D persistence modules over $\R$.
\end{itemize}
We hope that RIVET will prove to be a useful addition to the existing arsenal of TDA tools.  
Regardless of how it ultimately fares in that regard, however, we feel that the broader program of developing practical computational tools for multidimensional persistence is a promising direction for TDA, and we hope that this work can draw attention to the possibilities for this.  We believe that there is room for a diverse set of approaches.  

\subsection*{Acknowledgements}
This paper has benefited significantly from conversations with John Carlsson about point-line duality and discrete optimization.  We also thank Ulrich Bauer, Magnus Botnan, and Dmitriy Morozov, and Francesco Vaccarino for helpful discussions.  
The bulk of the work presented in this paper was carried out while the authors were postdoctoral fellows at the Institute for Mathematics and its Applications, with funds provided by the National Science Foundation.  Some of the work was completed while Mike was visiting Raul Rabadan's lab at Columbia University.  Thanks to everyone at the IMA and Columbia for their support and hospitality.  

\appendix 
\section{Appendix}
\subsection{Details of the RIVET Interface}\label{Sec:Interface_Details}

Expanding on \cref{Sec:Intro_Visualization}, we now provide some more detail about RIVET's graphical interface. 

As discussed in \cref{Sec:Computing_Line_Arrangement}, the module $M$ is input to RIVET as a free implicit representation \[\Phi=(\gr_1,\gr_2,D_1,D_2).\]  RIVET uses $\gr_1$ and $\gr_2$ to choose the bounds for the Line Selection Window and Persistence Diagram Window.  To explain, let $A$ and $B$ denote the greatest lower bound and least upper bound, respectively, of $\im \gr_1\cup \im \gr_2$.  In order to avoid discussion of uninteresting edge cases, we will assume that $A_1<B_1$ and $A_2<B_2$.

\paragraph{Choice of Bounds for the Line Selection Window}
We take the lower left corner and upper right corner of the Line Selection Window to be $A$ and $B$, respectively.\footnote{For a more intrinsic choice of bounds for the Line Selection Window, one could instead take the lower left and upper right corners of the window to be the greatest lower bound and least upper bound, respectively, of the set $\{a\in \R^2\mid \xi_i(M)(a)> 0\textup{ for some } i\in \{0,1,2\}\}$.  However, we feel that for a typical TDA application, the extrinsic bounds for the Line Selection Window we have proposed provide a more intuitive choice of scale.}
By default, the Line Selection Window is drawn to scale.  A toggle switch rescales (i.e., normalizes) the window so that it is drawn as a square on the screen.

\paragraph{Parameterization of Lines for Plotting Persistence Diagrams}
We next explain how, given a line $L\in \bar\L$, RIVET represents $\B{M^L}$ as a persistence diagram.  Let us first assume that the Line Selection Window is unnormalized; we treat the case where it is normalized at the end of this section.  Further, by translating the indices of $M$ if necessary, we may assume without loss of generality that $A=0$.

As noted in \cref{Sec:Intro_Visualization},  to plot $\B{M^L}$ as a persistence diagram, we need to first choose a parameterization $\gamma_L:\R\to L$ of $L$.  We choose $\gamma_L$ to be the unique order-preserving isometry such that:
\begin{enumerate}
\item If $L$ has finite positive slope, then $\gamma_L(0)$ is the unique point in the intersection of $L$ with the union of the non-negative portions of the coordinate axes.
\item If $L$ is the line $x=a$, then $\gamma_L(0)=(a,0)$.
\item If $L$ is the line $y=a$, then $\gamma_L(0)=(0,a)$.
\end{enumerate}

\paragraph{The Choice of Bounds for the Persistence Diagram Window}
The bounds for the Persistence Diagram Window are are chosen statically, depending on $M$ but not on the choice of $L$.  RIVET chooses the viewable region of the persistence diagram to be $[0,|B|]\times [0,|B|]$.

\paragraph{Representation of Points Outside of the Viewable Region of the Persistence Diagram}
It may be that for some choices of $L$, $\B{M^L}$ contains intervals $[\alpha,\beta)$, for $\alpha \geq |B|$ or $\beta \geq |B|$, so that $[\alpha,\beta)$ falls outside of the viewable region of the persistence diagram; 
indeed, the coordinates of some points in the persistence diagram can become huge (but finite), as the slope of the $L$ approaches 0 or $\infty$.  Thus, our persistence diagrams include some information on the top of the diagram, not found in typical persistence diagram visualizations, to record the points in the persistence diagram which fall outside of the viewable region:
\begin{itemize}
\item Above the main square region of the persistence diagram are two narrow horizontal strips, separated by a dashed horizontal line.  The upper strip is labeled ``inf," while the lower is labeled ``$<$ inf".  In the higher strip, we plot a point with $x$-coordinate $\alpha$ for each interval $[\alpha, \infty)$ with $\alpha\leq |B|$.  In the lower strip, we plot a point with $x$-coordinate $\alpha$ for each interval $[\alpha, \beta)\in \B{M^L}$ with $\alpha\leq |B|$ and $|B| < \beta < \infty$. 
\item Just to the right of each of the two horizontal strips is a number, separated from the strip by a vertical dashed line.  The upper number is the count of intervals $[\alpha,\infty)\in \B{M^L}$ with $|B|<\alpha$; the lower number is the count of intervals $[\alpha,\beta)\in \B{M^L}$ with $|B|<\alpha,\beta<\infty$.
\end{itemize}

\paragraph{Persistence Diagrams Under Rescaling}
If we choose to normalize the Line Selection Window, then RIVET also normalizes the persistence diagrams correspondingly.  To do this, it computes respective affine normalizations $\gr'_1$, $\gr'_2$ of $\gr_1$, $\gr_2$, so that after normalization, $A=(0,0)$ and $B=(1,1)$.  RIVET then chooses the parameterizations of lines $L\in \bar \L$ and computes bounds on the Persistence Diagram Window exactly as described above in the unnormalized case, but taking the input FI-rep to be $(\gr'_1,\gr'_2,D_1,D_2)$.

\subsection{\texorpdfstring{Computing the Lists $\partialLevelSet{j}{u}$ at Cell $e_0$}{Computing the Lists LevelSet}} 
\label{sec:alg_details}\label{sec:frontier_sweep_details}

As mentioned in \cref{Sec:Initial_Cell}, to efficiently compute the lists $\partialLevelSet{j}{u}$ at cell $e_0$, we use a sweep algorithm.  As we did in \cref{Sec:Sparse_Data_Structure_For_T}, to keep notation simple we assume that $\ordfun_x$ and $\ordfun_y$ are the identity maps on $[n_x]$ and $[n_y]$ respectively, so that $(\ordfun_x\times \ordfun_y)^{-1}(\PCal)=\PCal$.
We assume that to start, $\gr_1$ and $\gr_2$ are both increasing with respect to colexicographical order; if this is not the case, then by \cref{Permutation_Lemma}, we can apply a sorting algorithm to modify $\Phi$ so that the assumption does hold. This sorting can be done in $O(m \log m)$ time.

Our sweep algorithm maintains a linked list \frontier of pointers to elements of $\PCal$. (These elements are stored in $\xiSuppMat$.)  Both the $x$- and $y$-coordinates of the entries of \frontier are always strictly decreasing. 

The algorithm iterates through the rows of the grid $[n_x]\times [n_y]$ from top to bottom.
The list \frontier is initially empty, and is updated at each row $s$, with the help of $\xiSuppMat$.
If row $s$ is empty, then no update is necessary at row $s$.
Otherwise, let $u$ be the rightmost entry of $\PCal$ in row $s$, and let $r$ be the column containing $u$.
If the last element of \frontier is also in column $r$, then this element is removed from \frontier and replaced by $u$.
Otherwise, we append $u$ to the end of \frontier. 

The algorithm then inserts each $k \in [m_j]$ with $\gr_j(k)=(r,s)$ for some $r$ into the appropriate list $\partialLevelSet{j}{-}$.  
(Since $\gr_j$ is assumed to be increasing with respect to colexicographical order, we have immediate access to all such $k$.)
Specifically, $k$ is added to the list $\partialLevelSet{j}{u}$, for $u$ the leftmost element of \frontier with $r \leq u_1$.  
The lists $\partialLevelSet{j}{u}$ are maintained in lexicographical order.
It is easy to check that $u=\lift^0 \circ \gr_j(k)$, as desired.  

The frontier-sweep algorithm is stated in pseudocode in \cref{alg:frontier}.

Updating \frontier at each row of $\xiSuppMat$ takes constant time, so the total cost of updating \frontier is $O(n_y)$.  
For each row $s$, we must iterate over \frontier once to identify the lists $\partialLevelSet{j}{-}$ into which we insert elements of $[m_j]$.  
There are $n_y$ rows, and the length of \frontier is $O(\kappa_x)$, so the total cost of these iterations over \frontier is $O(n_y \kappa_x)$.  
Inserting each $k\in [m_j]$ into the appropriate list $\partialLevelSet{j}{-}$ takes constant time, so the total cost of such insertions is $O(m)$.  
Thus, the total number of operations for the frontier-sweep algorithm (including the cost of the initial sorting to put $\gr_1$ and $\gr_2$ in the right form), is $O(m \log m+ n_y \kappa_x)=O(m \log m+ m\kappa)$.

\begin{algorithm}[h]
	\caption{Frontier-sweep algorithm for building the lists $\partialLevelSet{j}{u}$}\label{alg:frontier}
	\footnotesize
	\begin{algorithmic}[1]
		\Require $\gr_j$ (represented as a list in colexicographical order), $\xiSuppMat$  
		\Ensure{$\xiSuppMat$, updated so that for each $u\in \PCal^{e_0}$, $\partialLevelSet{j}{u}=(\lift^0\circ \gr_j)^{-1}(u)$, with each list sorted in lexicographical order}
		\State initialize \frontier as an empty linked list
		\For{$s = n_y$ to $1$} 
			\If{$\xiSuppMat$ has an entry in row $s$} \Comment{update \frontier\ for row $s$}
				\State let $r$ be the column containing rightmost element of $\PCal$ in row $s$
				\If{last element of \frontier is in column $r$}
					\State remove the last entry from \frontier
				\EndIf
				\State append the entry of $\PCal$ at row $s$, column $r$  to the end of \frontier 
			\EndIf
			\ForAll{entry $u$ in \frontier} \Comment{add elements with grades in row $s$ to the lists $\partialLevelSet{j}{-}$}
				\If{$u$ is the last element of \frontier}
					\State add all $k\in [m_j]$ such that $\gr_j(k)=(r,s)$ with $r\leq u_1$ to $\partialLevelSet{j}{u}$
				\Else
					\State let $v$ be the element after $u$ in \frontier
					\State add all $k\in [m_j]$ such that $\gr_j(k)=(r,s)$ with $v_1 < r\leq u_1$ to $\partialLevelSet{j}{u}$
				\EndIf
			\EndFor
		\EndFor
	\end{algorithmic}
\end{algorithm}

\bibliographystyle{abbrv}
{\small \bibliography{VRI_references} }

{\small \printnomenclature[0.5in] }	

\end{document}